%
%
%
%
%

%
\RequirePackage{fix-cm}
\RequirePackage{graphics}
\RequirePackage{color}
\RequirePackage{mathrsfs}
\RequirePackage{amssymb}
\RequirePackage{amsmath}
\RequirePackage{boxedminipage}
\RequirePackage{stmaryrd}
\RequirePackage{multirow}
\RequirePackage{booktabs}
\RequirePackage{accents}
\RequirePackage{bm}
\RequirePackage{tikz}
\usetikzlibrary{arrows,shapes,chains}
\documentclass[smallextended]{svjour3}       
\smartqed  
\usepackage{graphicx}
%
%
%
\newcommand{\RR}{\mathbf R}
\newtheorem{assumption}{Assumption}

%
%
\begin{document}
\title{First-Order Primal-Dual Method for Nonlinear Convex Cone Programming
\thanks{Acknowledgments: this research was supported by NSFC: 71471112, 71871140.}
}

\titlerunning{First-Order Primal-Dual Method for NCCP}        

\author{Lei Zhao         \and
        Daoli Zhu 
}


\institute{Lei Zhao \at
           Antai College of Economics and Management and Sino-US Global Logistics Institute, Shanghai Jiao Tong University, 200030 Shanghai, China \\
           \email{l.zhao@sjtu.edu.cn}           
           \and
           Daoli Zhu \at
           Antai College of Economics and Management and Sino-US Global Logistics Institute, Shanghai Jiao Tong University, 200030 Shanghai, China \\
           Tel.: +086-21-62932218\\
           \email{dlzhu@sjtu.edu.cn}
}

\date{Received: date/ Accepted: date}

\maketitle

\begin{abstract}
Nonlinear Convex Cone Programming (NCCP) problems are important and have many practical applications. In this paper, we introduces a flexible first-order primal-dual algorithm called the Variant Auxiliary Problem Principle (VAPP) for solving NCCP problems when the objective function and constraints are smooth  and may be nonsmooth. Each iteration of VAPP generates a nonlinear approximation to the primal problem of an augmented Lagrangian method. The approximation incorporates both linearization and a variable distance-like function, and then the iterations of VAPP provide one decomposition property for NCCP. Motivated by recent applications in big data analysis, there has been an explosive growth in interest in the convergence rate analysis of parallel computing algorithms for large scale optimization problem. This paper proposes an iteration-based error bound and linear convergence of VAPP. Some verifiable sufficient conditions of this error bound are also discussed. For the general convex case (without error bound), we establish $O(1/t)$ convergence rate for primal suboptimality, feasibility and dual suboptimality. By adaptively setting in parameters at different iterations, we show an $O(1/t^2)$ rate for the strongly convex case. We further present Forward-Backward Splitting (FBS) formulation of VAPP method and establish the connection between VAPP and other primal-dual splitting methods. Finally, we discuss some issues in the implementation of VAPP.
\keywords{Nonlinear convex cone programming \and First-order \and Primal-dual method \and Augmented Lagrangian \and Linear convergence \and Forward-Backward Splitting}
\end{abstract}
\section{Introduction}\label{intro}
\indent In this paper, we consider Nonlinear Convex Cone Programming (NCCP):
\begin{equation}\label{Prob:general-function}
\begin{array}{lll}
\mbox{(P):}  &\min       & G(u)+J(u)      \\
             &\rm {s.t}  & \Theta(u)=\Omega(u)+\Phi(u)\in -\mathbf{C} \\
             &           & u\in \mathbf{U}
\end{array}
\end{equation}
where $G$ is a convex smooth function on the closed convex set $\mathbf{U}\subset \RR^{n}$, and $J$ is a convex, possibly nonsmooth function on $\mathbf{U}\subset \RR^{n}$. $\Omega$ is a smooth and $\Phi$ is a possibly nonsmooth mapping from $\RR^{n}$ to $\RR^{m}$. $\Omega(u)$ and $\Phi(u)$ are $\mathbf{C}$-convex and $\mathbf{C}$ is a nonempty closed convex cone in $\RR^{m}$ with vertex at the origin, that is, $\alpha\mathbf{C}+\beta\mathbf{C}\subset \mathbf{C}$, for $\alpha,\beta\geq 0$. It is obvious that when $\mathring{\mathbf{C}}$ (the interior of $\mathbf{C}$) is nonempty, the constraint $\Theta(u)\in -\mathbf{C}$ corresponds to an inequality constraint. The case $\mathbf{C}=\{0\}$ corresponds to an equality constraint. $\mathbf{C}^{*}$ denotes the conjugate cone i.e. $\mathbf{C}^*=\{y|\langle y,x\rangle\geq 0, \forall x\in\mathbf{C}\}$.\\
\indent NCCP is an important and challenging problem class from the viewpoint of optimization theory. Nonlinear programming, nonlinear semi-infinite programming (Goberna and L\'opez~\cite{SIP1998}, L\'opez and Still~\cite{SIP2007}, Shapiro~\cite{SIP2009}), and nonlinear second-order cone programming (Alizadeh and Goldfarb~\cite{SOCP03}, Fukushima et al.~\cite{Fukushima2012,Fukushima2009,Fukushima2007}, Yamashita and Yabe~\cite{Yamashita2009}) are special classes of NCCP.\\
\indent Furthermore, NCCP has numerous applications such as robust optimization (Ben-Tal and Nemirovski~\cite{Bental1998}, Ben-Tal et al.~\cite{Bental09}), finite impulse-response filter design (Lobo et al.~\cite{FIR1998}, Wu et al.~\cite{FIR1996}), total variation denoising and compressed sensing (Cand\`es et al.~\cite{CandesRombergTao06} and Donoho~\cite{Donoho-2006}), resource allocation (Patriksson~\cite{RA2008}, Patriksson and Str\"omberg~\cite{RA2015}), and so on.\\
\indent For general convex programming, the augmented Lagrangian method can overcome the instability and nondifferentiability of the Lagrangian dual function. Furthermore, the augmented Lagrangian of a constrained convex program has the same solution set as the original constrained convex program. The augmented Lagrangian approach for equality-constrained optimization problems was introduced in Hestenes~\cite{Hestenes1969} and Powell~\cite{Powell1969}, and then extended to inequality-constrained problems by Buys~\cite{Buys1972}. Theoretical properties of the augmented Lagrangian duality method on a finite-dimensional space were investigated by Rockafellar~\cite{Rock76}. Some properties of the augmented Lagrangian in finite-dimensional cone-constrained optimization are provided by Shapiro and Sun~\cite{ShapiroSun2004}.\\
\indent Although the augmented Lagrangian approach has several advantages, it does not preserve separability, even when the initial problem is separable. One way to decompose the augmented Lagrangian is Alternating Direction Method of Multipliers (ADMM) (Fortin and Glowinski~\cite{ADMM1983}). ADMM applies a well-known Gauss-Seidel-like minimization strategy. Because of the excellent numerical performance, some algorithmic tools are developed based on ADMM. (e.g.~\cite{SolverADM}) Another way to overcome this difficulty is the Auxiliary Problem Principle of Augmented Lagrangian methods (APP-AL) (Cohen and Zhu~\cite{CohenZ}), which is a fairly general first-order primal-dual parallel decomposition method based on linearization of the augmented Lagrangian in separable or nonseparable, smooth or nonsmooth nonlinear convex programming. Thanks to this parallel decomposable property, excellent numerical performance can be achieved. (see parallel computing software such as DistOpt~\cite{Distopt1,Distopt2})
\subsection{Our previous work on NCCP and motivation of further study}\label{1.2}
\indent There are two types of NCCP problems mentioned by Cohen and Zhu~\cite{CohenZ} as follows:
\begin{equation*}\label{Prob:general-function-al}
\begin{array}{|l|l|}
\hline
\mbox{NCCP with nonsmooth constraints}&\mbox{NCCP with smooth constraints}\\
\hline
\begin{array}{lll}
\mbox{(P$_{1}$):}&\min     &G(u)+J(u)\\
                 &\rm {s.t}&\Theta(u)=\Phi(u)\in -\mathbf{C}\\
                 &         & u\in U.\\
\end{array}
&
\begin{array}{lll}
\mbox{(P$_{2}$):}&\min      &G(u)+J(u)      \\
                 &\rm {s.t} &\Theta(u)=\Omega(u)\in -\mathbf{C} \\
                 &          & u\in U.\\
\end{array}\\
\hline
\end{array}
\end{equation*}
These two problems could be seen as special cases of NCCP. Cohen and Zhu~\cite{CohenZ} proposed the APP-AL to solve (P$_1$):\\
\noindent\rule[0.25\baselineskip]{\textwidth}{1.5pt}
{\bf Auxiliary Problem Principle (APP-AL) for solving (P$_1$): Algorithm 14 in~\cite{CohenZ}}\\
\noindent\rule[0.25\baselineskip]{\textwidth}{0.5pt}
{Initialize} $u^0 \in \mathbf{U}$ and $p^0\in \mathbf{C^*}$  \\
 \textbf{for} $k = 0,1,\cdots $, \textbf{do}
\begin{eqnarray}
\mbox{(AP$^{k}$)}\quad u^{k+1}&\leftarrow&\min_{u\in \mathbf{U}}\langle\nabla G(u^{k}), u \rangle + J(u)+ \langle\Pi(p^k+\gamma\Phi(u^k)), \Phi(u)\rangle+\frac{1}{\epsilon}D(u,u^k);\label{primal}\\
p^{k+1}&\leftarrow&p^k+\frac{\rho}{\gamma}\bigg{[}\Pi\big{(}p^k+\gamma\Phi(u^{k+1})\big{)}-p^k\bigg{]}.\qquad\quad\qquad\qquad\qquad\qquad\qquad\qquad\label{dual}
\end{eqnarray}
\textbf{end for}\\
\noindent\rule[0.25\baselineskip]{\textwidth}{1.5pt}
\indent In the APP-AL algorithm, a core function $K(u)$ is introduced. The objective function of (AP$^k$) is obtained by keeping the nonsmooth part $J(u)$ and $\Phi(u)$, linearizing the smooth part $G(u)$ and the nonlinear term $\varphi(\Phi(u),p)=[\|\Pi\big{(}p+\gamma\Phi(u)\big{)}\|^2-\|p\|^2]/2\gamma$ in the augmented Lagrangian, and adding a regularization term $\frac{1}{\epsilon}D(u,u^k)=\frac{1}{\epsilon}[K(u)-K(u^k)-\langle\nabla K(u^k),u\rangle]$ (Bregman distance function). $\Pi(\cdot)$ is the projection on $\mathbf{C}^*$. In~\cite{CohenZ}, it is shown that the sequence generated by this algorithm converges to the saddle point of (P$_1$).\\
\indent To solve (P$_2$) with smooth nonseparable mapping $\Omega(u)$, they
also proposed a variant algorithm in which the term involving $\Omega(u)$ in (AP$^k$) is replaced by $\langle\Pi\big{(}p^k+\gamma\Omega(u^k)\big{)},\nabla\Omega(u^k)\cdot u\rangle$, but the formal convergence analysis is not given.\\
\indent Regarding decomposition, the interesting part of the APP-AL algorithm is as follows. Assume the following space decomposition of $\mathbf{U}$:\\
\begin{equation}\label{space_decomposition}
\mathbf{U}=\mathbf{U}_1\times\mathbf{U}_2\cdots\times\mathbf{U}_N, \mathbf{U}_i\subset \RR^{n_i}, \sum_{i=1}^{N}n_i=n.
\end{equation}
\indent For the structured problem (P$_1$), where $J(u)=\sum_{i=1}^{N}J_i(u_i)$ and $\Phi(u)=\sum_{i=1}^{N}\Phi_i(u_i)$, if we chose an additive core function $K(u)=\sum_{i=1}^{N}K_i(u_i)$,
then the problem (AP$^k$) splits into $N$ independent subproblems. Additionally, APP-AL has wide applications in engineering systems. In particular, this approach was adopted by Kim and Baldick and by Renaud to parallelize optimal power flow in very large interconnected power systems~\cite{Electro97,KimBaldick00,Electro93}. For effective implementation of APP-AL, choice of parameters is the key factor affecting the convergence performance of the algorithm. (Cao et al.~\cite{APPturning1}, Hur et al.~\cite{APPturning2})\\
\indent Large-scale optimization has recently attracted significant attention due to its important role in big data analysis. Applications found in various areas have drawn renewed attention to research on the convergence rate analysis. In this paper we further investigate APP-AL and propose a new algorithm to solve NCCP. Specifically, we focus on the following issues:
\begin{itemize}
\item[(i)] Propose a flexible Variant Auxiliary Problem Principle (VAPP) algorithm for solving NCCP problems.
\item[(ii)] Derive better convergence rates of the VAPP algorithm to solve general convex and strongly convex problem (P).
\item[(iii)] Study error bound conditions to ensure the linear convergence of the VAPP algorithm, and derive some verifiable sufficient condition for error bound property.
\item[(iv)] Investigate the Forward-Backward Splitting (FBS) formulation for the VAPP algorithm, and establish the connection between VAPP algorithm and other primal-dual splitting methods.
\item[(v)] For practical reasons, propose some technique to overcome the difficulty in the implementation of the VAPP algorithm, including the backtracking strategy, estimate the dual bound, and explore $\mathbf{C}$-convexity of structured mapping to some special cones.
\end{itemize}
\subsection{Related work}\label{1.3}
\indent In recent years, the research on decomposition method for nonlinear optimization with constraints can be classified four lines: Alternate method of augmented Lagrangian, partial linearization of augmented Lagrangian, saddle point method, and splitting method.\\
\indent First we review some ADMM-type schemes. The celebrated ADMM traces back to the work of Fortin and Glowinski~\cite{ADMM1983}, and Gabay and Mercier~\cite{GabayMercier1976}.~\cite{Heyuan,Monteiro2013,LiSunToh2016,GaoZhang2017,Aybat2016} establish the worse-case $O(1/t)$ sub-linear convergence rate of ADMM and its extension. For convex minimization model with linear constraints, the global linear convergence rate of ADMM is proved in~\cite{DengYin,HongLuo2017,LinMaZhang2015,LiuYuanZengZhang2018}.\\
\indent Secondly, we review some works based on partial linearization of augmented Lagrangian and proximal like iterations. APP-AL (Cohen and Zhu~\cite{CohenZ}) is described in Subsection~\ref{1.2}. Another important work is the predictor corrector proximal multiplier method (PCPM) proposed by Chen and Teboulle~\cite{ChenTeboulle1994}. Their inexact method allows for computing the primal steps approximately, the convergence is provided under a mild assumption. Linear convergence is provided whenever the inverse of KKT mapping is Lipschitz continuous at the origin. Later, Zhang et al.~\cite{Zhang2011} introduced a unified primal dual method for nonlinear convex optimization with linear constraints. The general idea of their method is to replace the augmented Lagrangian minimization by proximal-like iterations in the Uzawa algorithm.\\
\indent Next we present some work on the saddle point method. Chambolle and Pock~\cite{ChambollePock11,ChambollePock16} proposed a primal-dual algorithm (PDA) that can solve convex-concave saddle point problem: $\min\limits_{x}\max\limits_{y}f(x)-g(y)+\langle Kx,y\rangle$. This method can be interpreted as a preconditioned ADMM. The sequence generated by PDA converges to one saddle point with $O(1/t)$ ergodic convergence rate. $O(1/t^2)$ rate and linear convergence are also proposed in their work. For nonlinear convex-concave saddle point problem: $\min\limits_{x}\max\limits_{y}\phi(x,y)$, Nemirovski et. al.~\cite{Nemirovski2004} proposed a Mirror-Prox algorithm that can solve it with $O(1/t)$ rate. For the strongly concave case, they also proposed the $O(1/t^2)$ rate of Mirror-Prox~\cite{Mirror-Prox-1,Mirror-Prox-2}. Recently, Hamedani and Aybat~\cite{Aybat-nonlinear} proposed a PDA that can solve a more complex convex-concave saddle point problem: $\min\limits_{x}\max\limits_{y}f(x)-g(y)+\phi(x,y)$. They showed global convergence and provided ergordic iteration complexity $O(1/t)$ in terms of the primal-dual gap function. $O(1/t^2)$ rate is also proposed for the case $f$ is strongly convex.\\
\indent  Finally, we review the works on splitting. As stated in~\cite{Combettes2018}, many different primal-dual splitting algorithm are explicitly or implicitly, reformulations of three basic schemes: Forward-Backward Splitting (FBS)~\cite{Mercier1979}, Douglas-Rachford Splitting (DRS)~\cite{LionsMercier1979} and Tseng's Forward-Backward-Forward Splitting (FBFS)~\cite{Tseng2000}.\\
\indent Various primal-dual splitting methods are used to solve the composite optimization problem:
\begin{equation}\label{prob:11}
\min_{u} f(Au)+g(u),\quad A\in\RR^{m\times n}
\end{equation}
which can be reformulated as the equality constrained problem
\begin{equation}\label{prob:22}
\begin{array}{cc}
\min\limits_{u,v} &f(v)+g(u)\\
{\rm s.t.} &Au-v=0
\end{array}
\end{equation}
\indent In~\cite{ConnorVandenberghe2014}, O'Connor and Vandenberghe discuss some primal-dual splitting methods for solving this problem. They indicate that ADMM, Spingarn's method of partial inverses and Chambolle-Pock method may be rendered by DRS. Recently,~\cite{ConnorVandenberghe2017} showed the equivalence of the primal-dual hybrid gradient method (PDHG) and DRS. Esser et al.~\cite{Esser2010} proposed a generalized PDHG algorithm and other proximal FBS methods for solving problem~\eqref{prob:22}, its dual problem and saddle point formulation problem. Tseng proposed the FBFS method to solve the inclusion problem and provide the convergence of this method. His work is motivated by the extra-gradient method for monotone variational inequality. Compared with FBS method, FBFS needs an additional forward step and projection onto set $\mathbf{X}$. Furthermore, if the inverse of mapping is local Lipschitz, then his method has a local linear rate of convergence.
\subsection{Contributions and organization of this paper}
\indent In this paper, we generalize APP-AL~\cite{CohenZ} to the VAPP method for solving NCCP where the objective function and constraints are smooth and may be nonsmooth. Each iteration of VAPP generates a nonlinear approximation to the primal problem of an augmented Lagrangian method. The approximation incorporates both linearization and a variable distance-like function, then the iterations of VAPP provide one decomposition property for NCCP. The main contributions of this work are the following.
\begin{itemize}
\item[{\rm(i)}] We propose an error bound based on VAPP's iterations, and linear convergence under this condition is provided. We also derive a verifiable sufficient condition for this error bound.
\item[{\rm(ii)}] For the general convex case (without error bound condition), we establish $O(1/t)$ convergence rate results for primal suboptimality, feasibility and dual suboptimality. By adaptively setting in parameters at different iteration, we show $O(1/t^2)$ convergence rate for the strongly convex case.
\item[\rm{(iii)}] In addition, we propose the Forward-Backward splitting formulation of VAPP method and establish the connection between VAPP and other primal-dual splitting methods.
\end{itemize}
\indent Finally, we propose some techniques to overcome the difficulty in implementation of the VAPP method.\\
\indent The rest of this paper is organized as follows. Section~\ref{pre} is devoted to the preliminaries that we will use in this paper. In Section~\ref{VAPP-a}, we propose the updating scheme VAPP for solving NCCP problems. Convergence and convergence rate analyses are also provided. Additionally, we propose the $O(1/t^2)$ convergence rate for strongly convex case. In Section~\ref{sec:linear_convergence}, we provide the linear convergence of VAPP with various error bounds. Section~\ref{sec:FBS-relation} describes an FBS formulation for VAPP methods and explains the connection with other primal-dual splitting methods. In the Section~\ref{sec:implimentation}, we further study a variant VAPP with different assumption and the issues in the implementation of VAPP for NCCP. Finally, Section~\ref{sec:numerical} presents numerical experiments for Ivanov-type structured elastic net-SVM problem.
\section{Preliminaries}\label{pre}
\indent In this section, we recall the notation for the Lagrangian and augmented Lagrangian for nonlinear optimization with cone constraints and the projection onto a convex set.
\subsection{Lagrangian and augmented Lagrangian duality and saddle point optimality conditions for nonlinear cone optimization}
\indent The original Lagrangian of problem (P) is $L(u,p)=(G+J)(u)+\langle p,\Theta(u)\rangle$,
and a saddle point $(u^*,p^*)\in \mathbf{U}\times \mathbf{C}^*$ is a point such that
\begin{equation}
\forall u\in \mathbf{U},\; \forall p\in \mathbf{C}^{*}:\; L(u^{*},p)\leq L(u^{*},p^{*})\leq L(u,p^{*}). \label{saddle point:L}
\end{equation}
\indent The dual function $\psi$ is defined as $\psi(p)=\min_{u\in \mathbf{U}}L(u,p)$, $\forall p\in\mathbf{C}^*$, which is concave and sub-differentiable. We consider the primal-dual pair of nonlinear convex cone optimization problems:
\begin{equation*}
\begin{array}{lllllll}
\mbox{(P):} &\min      & (G+J)(u)                                   &\qquad\qquad&\mbox{(D):}    &\max      & \psi(p) \\
                &\rm {s.t} & \Theta(u)=\Omega(u)+\Phi(u)\in -\mathbf{C} &\qquad\qquad&               &\rm {s.t} & p\in\mathbf{C}^*.\\
                &          & u\in\mathbf{U}                             &\qquad\qquad&               &          &
\end{array}
\end{equation*}
\indent Throughout this paper, we make the following standard assumptions for problem (P):
\begin{assumption}\label{assump1}
{\rm(H$_1$)} $J$ is a convex, l.s.c. function (not necessarily differentiable) such that $\mathbf{dom}J\cap \mathbf{U}\neq \emptyset$.\\
{\rm(H$_2$)} $G$ is convex and differentiable; its derivative is Lipschitz with constant $B_{G}$.\\
{\rm(H$_3$)} $\Omega$ is $\mathbf{C}$-convex mapping from $\mathbf{U}$ to $\mathbf{C}$, where
\begin{equation}\label{Theta_C_convex}
\forall u,v\in \mathbf{U}, \forall \alpha\in [0,1], \Omega(\alpha u+(1-\alpha)v)-\alpha\Omega(u)-(1-\alpha)\Omega(v)\in - \mathbf{C}.
\end{equation}
$\Phi$ is also $\mathbf{C}$-convex mapping from $\mathbf{U}$ to $\mathbf{C}$.\\
{\rm(H$_4$)}  $\Omega$ is differentiable, the derivative of function $f_p(u)=\langle p,\Omega(u)\rangle$ is Lipschitz on $\mathbf{U}$ with constant $B_{\Omega}$ uniformly in $p\in\RR^m$, such that
\begin{equation}\label{eq:AssumpH4}
\forall u,v\in \mathbf{U}, \|\nabla f_p(u)-\nabla f_p(v)\|\leq B_{\Omega}\|u-v\|.
\end{equation}
{\rm(H$_5$)} $\Theta(u)$ is Lipschitz with constant $\tau$ on an open subset $\mathcal{O}$ containing $\mathbf{U}$, where
\begin{equation}\label{Theta_Lipschitz}
\forall u,v\in \mathcal{O}, \|\Theta(u)-\Theta(v)\|\leq\tau\|u-v\|.
\end{equation}
{\rm(H$_6$)} Constraint Qualification Condition. When $\mathring{\mathbf{C}}\neq\emptyset$, we assume that
\begin{equation}\label{Theta_CQC_neq}
\mbox{{\bf CQC:}}\qquad\qquad\qquad\qquad\quad\Theta(\mathbf{U})\cap(-\mathring{\mathbf{C}})\neq\emptyset.\qquad\qquad\qquad\qquad\qquad\quad
\end{equation}
For the case $\mathbf{C}=\{0\}$, we assume that
$0\in \mbox{interior of}\quad\Theta(\mathbf{U})$.\\
{\rm(H$_7$)} There exists at least one saddle point for Lagrangian of {\rm(P)}.
\end{assumption}
\indent Conditions (H$_1$)-(H$_3$) guarantee that (P) is a convex problem. The CQC condition (H$_6$) implies that the Lagrangian dual function is coercive and that the dual optimal solution set is bounded~\cite{CohenZ}. \\
\indent Under Assumption~\ref{assump1}, by Theorem 3.2.12 of~\cite{Ortega70}, for any $p\in\RR^m$, the following descent property of $G$ and $\langle p,\Omega\rangle$ holds:
\begin{eqnarray}
G(v)-G(u)-\langle\nabla G(u),v-u\rangle&\leq&\frac{B_G}{2}\|u-v\|^2,\label{eq:Lip_G}\\
\langle p,\Omega(v)-\Omega(u)-\nabla\Omega(u)(v-u)\rangle&\leq&\frac{B_{\Omega}}{2}\|u-v\|^2.\label{eq:Lip_Omega}
\end{eqnarray}
\indent For convex problem (P), the primal-dual pair $(u^*,p^*)$ is a saddle point if and only if $u^*$ and $p^*$ are optimal solutions to the primal and dual problems (P) and (D), respectively, with no duality gap, that is, $(G+J)(u^*)=\psi(p^*)$. (See Shapiro and Scheinberg~\cite{Shapiro2000})\\
\indent It is well known that augmented Lagrangians are a remedy to the duality gaps encountered with original Lagrangians for nonconvex problems. As we shall see, augmented Lagrangians are also useful for convex, but not strongly convex, problems.\\
\indent The augmented Lagrangian associated with problem (P) is defined as
\begin{equation}\label{func:AL_with_xi}
L_{\gamma}(u,p)=\min_{\xi\in-\mathbf{C}}(G+J)(u)+\langle p,\Theta(u)-\xi\rangle+\frac{\gamma}{2}\|\Theta(u)-\xi\|^{2}.
\end{equation}
Consider the following function $\varphi: \RR^m\times\RR^n\rightarrow \RR$:
\begin{equation}\label{func:varphi_1}
\varphi(\theta,p)=\min_{\xi\in-\mathbf{C}}\langle p,\theta-\xi\rangle + \frac{\gamma}{2}\|\theta-\xi\|^{2}.
\end{equation}
Introducing a multiplier $q\in \mathbf{C}^*$ for the minimization problem~\eqref{func:varphi_1} with respect to the linear cone constraint, we obtain the equivalent formulation for $\varphi(\theta,p)$:
\begin{eqnarray}\label{func:varphi_2}
\varphi(\theta,p)&=&\max_{q\in\mathbf{C}^*}\min_{\xi}\langle p, \theta-\xi\rangle+\frac{\gamma}{2}\|\theta-\xi\|^2+\langle q,\xi\rangle\nonumber\\
                 &=&\max_{q\in\mathbf{C}^*}\langle q,\theta\rangle-\frac{\|q-p\|^2}{2\gamma}.
\end{eqnarray}
This provides the explicit expression $L_\gamma(u,p)=(G+J)(u)+\varphi(\Theta(u),p)$, with $\varphi(\Theta(u),p)=[\|\Pi\big{(}p+\gamma\Theta(u)\big{)}\|^2-\|p\|^2]/2\gamma$. The augmented Lagrangian dual function is defined as:
\begin{equation}
\forall p\in \RR^m, \psi_\gamma(p)=\min_{u\in\mathbf{U}}L_\gamma(u,p)=\min_{u\in\mathbf{U}}(G+J)(u)+\varphi(\Theta(u),p).
\end{equation}
Using $\psi_\gamma(p)$, we obtain the following new primal-dual pair of nonlinear convex cone optimization problems:
\begin{equation*}
\begin{array}{lllllll}
  \mbox{(P):} &\min      & (G+J)(u)                 &\qquad\qquad\qquad\qquad&\mbox{(D$_\gamma$):}    &\max      & \psi_\gamma(p) \\
              &\rm {s.t} & \Theta(u)\in -\mathbf{C} &\qquad\qquad\qquad\qquad&                      &\rm {s.t} & p\in\mathbf{R}^m\\
              &          & u\in\mathbf{U}           &\qquad\qquad\qquad\qquad&                      &          &
\end{array}
\end{equation*}
The saddle point of the augmented Lagrangian $(u^*,p^*)\in\mathbf{U}\times\mathbf{R}^m$ is defined as
\begin{equation}
\forall u\in \mathbf{U},\; \forall p\in \mathbf{R}^{m}:\; L_\gamma(u^{*},p)\leq L_\gamma(u^{*},p^{*})\leq L_\gamma(u,p^{*}). \label{saddle point:AL}
\end{equation}
The authors of \cite{CohenZ} show that $L$ and $L_\gamma$ have the same sets of saddle points $\mathbf{U}^*\times\mathbf{P}^*$ on $\mathbf{U}\times\mathbf{C}^*$ and $\mathbf{U}\times\mathbf{R}^m$, respectively. The point $(u^*,p^*)$ is a saddle point if and only if $u^*$ and $p^*$ are optimal solutions to the primal and dual problems (P) and (D$_\gamma$), respectively.
\subsection{The properties of projection on convex set}
\indent Let $\mathcal{S}$ be a nonempty closed convex set of $\RR^m$. For $u\in\RR^m$, let $\Pi_{\mathcal{S}}(u)$ be the projection on $\mathcal{S}$. Then we have that~\cite{Projectiononconvexsets}:
\begin{eqnarray}
&(i)&\langle v-\Pi_{\mathcal{S}}(u), u-\Pi_{\mathcal{S}}(u)\rangle\leq0, \forall v\in\mathcal{S};\label{eq:Projecproperty3}\\
&(ii) &\|\Pi_{\mathcal{S}}(u)-\Pi_{\mathcal{S}}(v)\|\leq\|u-v\|, \forall v\in\RR^m.\label{eq:Projecproperty4}
\end{eqnarray}
Another useful property of the projection operator is given by the following proposition.
\begin{proposition}\label{proposition}
\noindent For any $(u,v,w)\in\RR^{m\times m\times m}$, the projection operator $\Pi_{\mathcal{S}}$ satisfies
\begin{equation}\label{eq:Projecproperty7}
2\langle\Pi_{\mathcal{S}}(w+u)-\Pi_{\mathcal{S}}(w+v),u\rangle\leq\|u-v\|^2+\|\Pi_{\mathcal{S}}(w+u)-w\|^2-\|\Pi_{\mathcal{S}}(w+v)-w\|^2.
\end{equation}
\end{proposition}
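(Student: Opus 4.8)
The plan is to reduce the whole statement to the first-order variational characterization of the projection, \eqref{eq:Projecproperty3}, applied at a \emph{single} point, and then to absorb the only genuinely quadratic cross term with Young's inequality. Introduce the shorthand $a:=\Pi_{\mathcal{S}}(w+u)$ and $b:=\Pi_{\mathcal{S}}(w+v)$, so that the claimed bound reads
\begin{equation*}
2\langle a-b,\,u\rangle\le\|u-v\|^2+\|a-w\|^2-\|b-w\|^2 .
\end{equation*}
Since $a,b\in\mathcal{S}$, both are admissible test vectors in \eqref{eq:Projecproperty3}.

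First I would apply \eqref{eq:Projecproperty3} at the point $w+v$ (whose projection is $b$) with test vector $a\in\mathcal{S}$, obtaining $\langle a-b,\,(w+v)-b\rangle\le0$, i.e.\ $\langle a-b,\,v\rangle\le\langle a-b,\,b-w\rangle$. This is the only place the projection is invoked. Next I would split the target inner product as $\langle a-b,\,u\rangle=\langle a-b,\,v\rangle+\langle a-b,\,u-v\rangle$, bound the first summand by the line just obtained, and control the cross term by Young's inequality $2\langle a-b,\,u-v\rangle\le\|a-b\|^2+\|u-v\|^2$. Combining yields
\begin{equation*}
2\langle a-b,\,u\rangle\le 2\langle a-b,\,b-w\rangle+\|a-b\|^2+\|u-v\|^2 .
\end{equation*}

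The final step is purely algebraic: expanding $\|a-w\|^2=\|(a-b)+(b-w)\|^2$ gives the identity $\|a-w\|^2-\|b-w\|^2=\|a-b\|^2+2\langle a-b,\,b-w\rangle$, so the right-hand side above collapses exactly to $\|u-v\|^2+\|a-w\|^2-\|b-w\|^2$, which is the assertion. There is no genuine obstacle here; the only non-obvious design choice is to apply the variational inequality at $w+v$ rather than at $w+u$, because the inequality at $w+u$ (and likewise the nonexpansiveness \eqref{eq:Projecproperty4}) would bound $\langle a-b,u\rangle$ from below rather than above, and to reserve the whole slack $\|u-v\|^2$ for the Young step so that the remaining terms telescope into the exact square identity. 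I expect the verification that this single application of \eqref{eq:Projecproperty3} suffices (no second variational inequality is needed) to be the one point worth stating carefully.
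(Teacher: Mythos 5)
Your proof is correct and is essentially identical to the paper's: both apply the variational characterization \eqref{eq:Projecproperty3} at $w+v$ with test vector $\Pi_{\mathcal{S}}(w+u)$, handle the cross term $2\langle a-b,u-v\rangle$ via Young's inequality, and let the square-expansion identity $\|a-w\|^2-\|b-w\|^2=\|a-b\|^2+2\langle a-b,b-w\rangle$ cancel the $\|a-b\|^2$ terms. The only difference is cosmetic (the paper converts $2\langle a-b,b-w\rangle$ into squared norms before summing the two inequalities, while you sum first and expand at the end).
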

\proof
Since $\Pi_{\mathcal{S}}(w+u)\in\mathcal{S}$, using the property of projection~\eqref{eq:Projecproperty3}, we have that
\begin{eqnarray*}
\langle\Pi_{\mathcal{S}}(w+u)-\Pi_{\mathcal{S}}(w+v),w+v-\Pi_{\mathcal{S}}(w+v)\rangle\leq 0.
\end{eqnarray*}
Then we have that
\begin{eqnarray*}
2\langle\Pi_{\mathcal{S}}(w+u)-\Pi_{\mathcal{S}}(w+v),v\rangle &\leq& 2\langle\Pi_{\mathcal{S}}(w+u)-\Pi_{\mathcal{S}}(w+v),\Pi_{\mathcal{S}}(w+v)-w\rangle\\
                                    &=&\|\Pi_{\mathcal{S}}(w+u)-w\|^2-\|\Pi_{\mathcal{S}}(w+u)-\Pi_{\mathcal{S}}(w+v)\|^2-\|\Pi_{\mathcal{S}}(w+v)-w\|^2.
\end{eqnarray*}
It is clear that
$$2\langle\Pi_{\mathcal{S}}(w+u)-\Pi_{\mathcal{S}}(w+v),u-v\rangle\leq\|u-v\|^2+\|\Pi_{\mathcal{S}}(w+u)-\Pi_{\mathcal{S}}(w+v)\|^2.$$
Adding the preceding two inequalities, we have~\eqref{eq:Projecproperty7}.\\
\qed
Next, we consider the projection onto a convex cone. Let $\Pi$ and $\Pi_{-\mathbf{C}}$ be the projection on $\mathbf{C}^*$ and $-\mathbf{C}$. The projection is characterized by the following conditions (see Wierzbicki~\cite{Projection}):
\begin{eqnarray}
&(iii)  &v=\Pi(v)+\Pi_{-\mathbf{C}}(v), \forall v\in\mathbf{R}^m;\label{eq:Projecproperty5}\\
&(iv) &\langle\Pi(v), \Pi_{-\mathbf{C}}(v)\rangle=0, \forall v\in\mathbf{R}^m.\label{eq:Projecproperty6}
\end{eqnarray}
\section{VAPP method for solving NCCP}\label{VAPP-a}
\subsection{Scheme VAPP and solutions for primal subproblem}
Based on the augmented Lagrangian theory, in this subsection we will establish a new first-order primal-dual augmented Lagrangian algorithm to solve (P). We introduce the core function $K(\cdot)$ and variable parameter $\epsilon^k$, $\epsilon^k>0$. $K(\cdot)$ satisfies the following assumption:
\begin{assumption}\label{assump2}
$K$ is strong convex with parameter $\beta>0$ and differentiable with its gradient Lipschitz continuous with the parameter $B$ on $\mathbf{U}$.
\end{assumption}
\indent Note that $D(u,v)=K(u)-K(v)-\langle\nabla K(v),u-v\rangle$ is a Bregman-like function~\cite{Mirror,CohenZ}. From Assumption~\ref{assump2} we have that $\frac{\beta}{2}\|u-v\|^2\leq D(u,v)\leq\frac{B}{2}\|u-v\|^2$.\\
\indent We assume the sequence $\{\epsilon^k\}$ satisfies:
\begin{eqnarray}\label{eq:parameter}
0<\underline{\epsilon}\leq\epsilon^{k+1}\leq\epsilon^{k}\leq\overline{\epsilon}<\beta/\big{(}B_{G}+B_{\Omega}+\gamma\tau^{2}\big{)}.\qquad\qquad\label{parameter-choice-a}
\end{eqnarray}
\indent For given $u^k$ and $p^k$, we take following approximation of augmented Lagrangian $L_\gamma(u,p)=(G+J)(u)+\varphi(\Theta(u),p)$:
\begin{eqnarray*}
\tilde{L}_\gamma(u,p)&=&G(u^k)+\langle \nabla G(u^k),u-u^k\rangle+J(u)+\varphi\big{(}\Theta(u^k),p^k\big{)}\nonumber\\
&&+\langle\Pi\big{(}p^k+\gamma\Theta(u^k)\big{)},\nabla\Omega(u^k)(u-u^k)+\Phi(u)-\Phi(u^k)\rangle
+\frac{1}{\epsilon^k}D(u,u^k),
\end{eqnarray*}
where $\Pi\big{(}p^k+\gamma\Theta(u^k)\big{)}=\nabla_\theta\varphi\big{(}\Theta(u^k),p^k\big{)}$. Based on the above approximation of augmented Lagrangian $L_\gamma(u,p)=(G+J)(u)+\varphi(\Theta(u),p)$, we propose the following first-order primal-dual method for solving the NCCP problem (P):\\
\noindent\rule[0.25\baselineskip]{\textwidth}{1.5pt}
{\bf VAPP: Variant Auxiliary Problem Principle for solving (P)}\\
\noindent\rule[0.25\baselineskip]{\textwidth}{0.5pt}
{Initialize} $u^0 \in \mathbf{U}$ and $p^0\in \mathbf{C^*}$  \\
 \textbf{for} $k = 0,1,\cdots $, \textbf{do}
\begin{eqnarray}
u^{k+1}&\leftarrow&\min_{u\in \mathbf{U}}\langle\nabla G(u^{k}), u \rangle + J(u)+ \langle q^k, \nabla\Omega(u^k)u+\Phi(u)\rangle+\frac{1}{\epsilon^k}D(u,u^k);\label{primal}\\
p^{k+1}&\leftarrow&\Pi\big{(}p^k+\rho\Theta(u^{k+1})\big{)}.\quad\qquad\qquad\qquad\qquad\qquad\qquad\qquad\qquad\qquad\qquad\label{dual}
\end{eqnarray}
\textbf{end for}\\
\noindent\rule[0.25\baselineskip]{\textwidth}{1.5pt}
where $q^k=\Pi\left(p^k+\rho\Theta(u^k)\right)$. Additionally, for simplicity of computation, we select $\rho=\gamma$. Assume the space decomposition~\eqref{space_decomposition} of $\mathbf{U}$, to solve problem (P) with $J(u)=\sum\limits_{i=1}^{N}J_i(u_i)$ and $\Phi(u)=\sum\limits_{i=1}^{N}\Phi_i(u_i)$, VAPP keeps the parallel decomposition property of APP-AL. Furthermore, if $J_i(u_i)$ and $\Phi_i(u_i)$ are quadratic or $\ell_{\nu}$ norms, $\nu=\{1,2,\infty\}$, then "$u$ update" in VAPP has a closed-form for each coordinate $u_i$.
\subsection{Convergence and convergence rate analysis of VAPP for convex problem (P)}
\indent Before proceeding convergence analysis of VAPP, we first give the generalized equilibrium reformulation for saddle point inequality~\eqref{saddle point:L}:\\
\indent Find $(u^*, p^*)\in\mathbf{U}\times\mathbf{C}^*$ such that
\begin{equation}\label{VIS}
\mbox{(EP):}\qquad L(u^*, p)-L(u, p^*)\leq 0, \forall u\in\mathbf{U}, p\in\mathbf{C}^*.
\end{equation}
Obviously, for given $u\in\mathbf{U}$, $p\in\mathbf{C}^*$, bifunction $L(u',p)-L(u,p')$ is convex in $u'$ and linear in $p'$. For $u,v\in\mathbf{U}$, define
\begin{eqnarray}\label{eq:delta}
\Delta^k(u,v)&=&D(v,u)-\epsilon^k\bigg{[}\big{(}G(v)-G(u)-\langle\nabla G(u),v-u\rangle\big{)}\nonumber\\
&&+\langle q^k, \Omega(v)-\Omega(u)-\nabla\Omega(u)(v-u)\rangle+\frac{\gamma}{2}\|\Theta(u)-\Theta(v)\|^2\bigg{]}.\;\label{definitiondelta}
\end{eqnarray}
By Assumptions~\ref{assump1},~\ref{assump2},~\eqref{eq:Lip_G} and~\eqref{eq:Lip_Omega}, obviously, we have that
\begin{eqnarray}\label{eq:bounddelta}
\Delta^k(u,v)&\geq&\frac{\beta-\epsilon^k(B_G+B_{\Omega}+\gamma\tau^2)}{2}\|u-v\|^2.\label{bounddelta}
\end{eqnarray}
\indent For $u\neq v$, if the term $\Delta^k(u,v)$ is negative, then the satisfication constraint $\epsilon^k<\frac{\beta}{B_G+B_{\Omega}+\gamma\tau^2}$ falls. This fact follows the backtracking strategy of VAPP (see section~\ref{implementation})
\indent The following lemma gives the descent property for generalized distance $D(u,u')+\frac{\epsilon^k}{2\gamma}\|p-p'\|^2$.
\begin{lemma}\label{lemma:bound1} {\bf(Descent inequalities of generalized distance function)}\\
Suppose Assumptions~\ref{assump1} and~\ref{assump2} hold, $\{(u^k,p^k)\}$ is generated by VAPP, and the parameter sequence $\{\epsilon^k\}$ satisfies~\eqref{eq:parameter}. Then for any $u\in\mathbf{U}$, $p\in\mathbf{C}^*$, $k\in\mathbb{N}$ descent property of generalized distance function holds
\begin{eqnarray*}
&&\big{[}D(u,u^{k+1})+\frac{\epsilon^{k+1}}{2\gamma}\|p-p^{k+1}\|^2\big{]}-\big{[}D(u,u^k)+\frac{\epsilon^k}{2\gamma}\|p-p^{k}\|^2\big{]}\\
&\leq&\epsilon^k[L(u,q^k)-L(u^{k+1},p)]-\big{[}\Delta^k(u^k,u^{k+1})+\frac{\epsilon^k}{2\gamma}\|q^k-p^k\|^2\big{]}
\end{eqnarray*}
\end{lemma}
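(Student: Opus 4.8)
The plan is to split the one-step change of the generalized distance into a \textbf{primal} contribution (the terms involving $D(u,\cdot)$ and the Lagrangian values $L(\cdot,q^k)$) and a \textbf{dual} contribution (the terms involving $\tfrac{\epsilon}{2\gamma}\|p-\cdot\|^2$ and the multiplier gap $\langle q^k-p,\Theta(u^{k+1})\rangle$), prove a separate inequality for each, and then add them. A key preliminary observation is that, because the dual update is a projection onto $\mathbf{C}^*$, both $q^k$ and $p^{k+1}$ lie in $\mathbf{C}^*$; together with the $\mathbf{C}$-convexity of $\Omega$ and $\Phi$ from (H$_3$), this makes $u\mapsto\langle q^k,\Omega(u)\rangle$ and $u\mapsto\langle q^k,\Phi(u)\rangle$ convex, which legitimizes all the subgradient manipulations below.

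For the primal part, I would write the first-order optimality (variational inequality) for the strongly convex subproblem \eqref{primal} at its minimizer $u^{k+1}$: for every $u\in\mathbf{U}$,
\[\langle\nabla G(u^k),u-u^{k+1}\rangle+\langle q^k,\nabla\Omega(u^k)(u-u^{k+1})\rangle+\tfrac{1}{\epsilon^k}\langle\nabla K(u^{k+1})-\nabla K(u^k),u-u^{k+1}\rangle+[J(u)-J(u^{k+1})]+\langle q^k,\Phi(u)-\Phi(u^{k+1})\rangle\geq0.\]
Each linearized term is then converted into a genuine function-value difference: convexity of $G$ and of $\langle q^k,\Omega(\cdot)\rangle$ turns the inner products evaluated at $u$ into $G(u)-G(u^k)$ and $\langle q^k,\Omega(u)-\Omega(u^k)\rangle$, while the same inner products evaluated at $u^{k+1}$ produce exactly the Taylor remainders of $G$ and $\langle q^k,\Omega\rangle$ that are bracketed in the definition \eqref{definitiondelta} of $\Delta^k$. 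For the Bregman term I use the three-point identity $\langle\nabla K(u^{k+1})-\nabla K(u^k),u^{k+1}-u\rangle=D(u,u^{k+1})+D(u^{k+1},u^k)-D(u,u^k)$. Since $\Theta=\Omega+\Phi$, collecting the $G,J,\langle q^k,\Omega\rangle,\langle q^k,\Phi\rangle$ terms recombines them into $L(u,q^k)-L(u^{k+1},q^k)$. Multiplying by $\epsilon^k$ and invoking the definition of $\Delta^k$ (which absorbs the two remainders but leaves an extra $-\epsilon^k\tfrac{\gamma}{2}\|\Theta(u^k)-\Theta(u^{k+1})\|^2$) yields $D(u,u^{k+1})-D(u,u^k)\leq\epsilon^k[L(u,q^k)-L(u^{k+1},q^k)]-\Delta^k(u^k,u^{k+1})-\epsilon^k\tfrac{\gamma}{2}\|\Theta(u^k)-\Theta(u^{k+1})\|^2$.

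For the dual part, set $z=p^k+\gamma\Theta(u^{k+1})$ so that $p^{k+1}=\Pi(z)$. The Pythagorean inequality for projection onto $\mathbf{C}^*$ (a consequence of \eqref{eq:Projecproperty3}, valid since $p\in\mathbf{C}^*$) gives $\|p-p^{k+1}\|^2\leq\|p-z\|^2-\|p^{k+1}-z\|^2$, which after expanding the squares reduces to $\tfrac{1}{2\gamma}\|p-p^{k+1}\|^2-\tfrac{1}{2\gamma}\|p-p^k\|^2\leq-\tfrac{1}{2\gamma}\|p^{k+1}-p^k\|^2+\langle p^{k+1}-p,\Theta(u^{k+1})\rangle$. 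I then split $\langle p^{k+1}-p,\Theta(u^{k+1})\rangle=\langle p^{k+1}-q^k,\Theta(u^{k+1})\rangle+\langle q^k-p,\Theta(u^{k+1})\rangle$, the second piece being the desired multiplier gap. The crucial step is to control $\langle p^{k+1}-q^k,\Theta(u^{k+1})\rangle$: since $p^{k+1}=\Pi(p^k+\gamma\Theta(u^{k+1}))$ and $q^k=\Pi(p^k+\gamma\Theta(u^k))$ are two projections sharing the base point $p^k$, I apply Proposition~\ref{proposition} with $w=p^k$, $u=\gamma\Theta(u^{k+1})$, $v=\gamma\Theta(u^k)$ to obtain $\langle p^{k+1}-q^k,\Theta(u^{k+1})\rangle\leq\tfrac{\gamma}{2}\|\Theta(u^{k+1})-\Theta(u^k)\|^2+\tfrac{1}{2\gamma}\|p^{k+1}-p^k\|^2-\tfrac{1}{2\gamma}\|q^k-p^k\|^2$. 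Here the $+\tfrac{1}{2\gamma}\|p^{k+1}-p^k\|^2$ cancels the $-\tfrac{1}{2\gamma}\|p^{k+1}-p^k\|^2$ from the projection inequality, and the $+\tfrac{\gamma}{2}\|\Theta(u^{k+1})-\Theta(u^k)\|^2$ is precisely what is needed to neutralize the leftover term from the primal step.

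Finally I would combine the two inequalities, using the monotonicity $\epsilon^{k+1}\leq\epsilon^k$ from \eqref{eq:parameter} to replace $\tfrac{\epsilon^{k+1}}{2\gamma}\|p-p^{k+1}\|^2$ by the larger $\tfrac{\epsilon^k}{2\gamma}\|p-p^{k+1}\|^2$, so that the dual inequality can be scaled uniformly by $\epsilon^k$ before being added to the primal one; then the decomposition $L(u,q^k)-L(u^{k+1},p)=[L(u,q^k)-L(u^{k+1},q^k)]+\langle q^k-p,\Theta(u^{k+1})\rangle$ assembles exactly the claimed descent inequality. I expect the main obstacle to be the bookkeeping of the coupling term $\tfrac{\gamma}{2}\|\Theta(u^k)-\Theta(u^{k+1})\|^2$: it must be peeled off from $\Delta^k$ on the primal side and regenerated with the matching sign on the dual side via Proposition~\ref{proposition}, and the correctness of the whole argument hinges on these two occurrences lining up. A secondary point demanding care is the justification of convexity of $\langle q^k,\Omega(\cdot)\rangle$ and $\langle q^k,\Phi(\cdot)\rangle$, which is exactly why $q^k$ is generated by a projection onto $\mathbf{C}^*$.
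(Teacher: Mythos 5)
Your proposal is correct and follows essentially the same route as the paper's proof in Appendix A$_1$: the primal variational inequality for \eqref{primal} combined with convexity of $G$ and $\langle q^k,\Omega(\cdot)\rangle$ and the three-point identity for $D$, the projection inequality at $p^{k+1}=\Pi(p^k+\gamma\Theta(u^{k+1}))$ together with Proposition~\ref{proposition} applied at $w=p^k$, $u=\gamma\Theta(u^{k+1})$, $v=\gamma\Theta(u^k)$, and the cancellation of the $\tfrac{\gamma}{2}\|\Theta(u^k)-\Theta(u^{k+1})\|^2$ coupling term before summing the two estimates with $\epsilon^{k+1}\leq\epsilon^k$. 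The only cosmetic difference is that you phrase the dual step as a Pythagorean inequality for the projection, whereas the paper applies \eqref{eq:Projecproperty3} directly; these are algebraically identical.
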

\begin{proof}
See Appendix {\bf A$_1$}.\qed
\end{proof}
\indent Now we are ready to prove the convergence of VAPP.
\begin{theorem}\label{theo:general-convergence} {\bf(Convergence analysis for VAPP)}\\
Suppose Assumption \ref{assump1} and Assumption~\ref{assump2} hold, and the sequence $\{\epsilon^k\}$ satisfies~\eqref{eq:parameter}. Let $(u^*,p^*)$ be a saddle point of $L$ over $\mathbf{U}\times\mathbf{C}^*$. Then the sequence $\{(u^{k},p^{k})\}$ generated by VAPP is bounded and converges to $(u^{*},p^{*})$.
\end{theorem}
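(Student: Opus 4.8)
The plan is to run a Fej\'er-monotonicity (Opial-type) argument driven entirely by the descent inequality of Lemma~\ref{lemma:bound1}. First I would specialize that inequality to the saddle point $(u^*,p^*)$, i.e. take $u=u^*$ and $p=p^*$. Since $q^k\in\mathbf{C}^*$ (it is a projection onto $\mathbf{C}^*$) and $u^{k+1}\in\mathbf{U}$, the saddle point inequalities \eqref{saddle point:L} give $L(u^*,q^k)\le L(u^*,p^*)\le L(u^{k+1},p^*)$, so that $\epsilon^k[L(u^*,q^k)-L(u^{k+1},p^*)]\le 0$. Writing $V^k=D(u^*,u^k)+\tfrac{\epsilon^k}{2\gamma}\|p^*-p^k\|^2$, Lemma~\ref{lemma:bound1} then yields $V^{k+1}\le V^k-\big[\Delta^k(u^k,u^{k+1})+\tfrac{\epsilon^k}{2\gamma}\|q^k-p^k\|^2\big]$. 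Because $\Delta^k(u^k,u^{k+1})\ge0$ by \eqref{bounddelta} (the parameter choice \eqref{eq:parameter} makes the coefficient positive), $\{V^k\}$ is nonincreasing; together with $D(u^*,u^k)\ge\tfrac{\beta}{2}\|u^*-u^k\|^2$ and $\epsilon^k\ge\underline{\epsilon}$ this already bounds both $\{u^k\}$ and $\{p^k\}$.

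Next I would telescope the same inequality to obtain $\sum_k\big[\Delta^k(u^k,u^{k+1})+\tfrac{\epsilon^k}{2\gamma}\|q^k-p^k\|^2\big]\le V^0<\infty$, so each summand tends to $0$. The coercive lower bound \eqref{bounddelta} then forces $\|u^{k+1}-u^k\|\to0$, and $\epsilon^k\ge\underline{\epsilon}$ forces $\|q^k-p^k\|\to0$. Since $\Theta$ is Lipschitz by \eqref{Theta_Lipschitz} and $\Pi$ is nonexpansive, $\|p^{k+1}-q^k\|=\|\Pi(p^k+\rho\Theta(u^{k+1}))-\Pi(p^k+\rho\Theta(u^k))\|\le\rho\tau\|u^{k+1}-u^k\|\to0$, whence $\|p^{k+1}-p^k\|\to0$ as well. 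Boundedness now lets me extract a subsequence $(u^{k_j},p^{k_j})\to(\bar u,\bar p)\in\mathbf{U}\times\mathbf{C}^*$, along which also $u^{k_j+1}\to\bar u$ and $q^{k_j}\to\bar p$.

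The crux, and the step I expect to be the main obstacle, is to show that this limit point is a saddle point. I would write the first-order (variational-inequality) optimality condition of the convex primal subproblem defining $u^{k+1}$ and pass to the limit along $k_j$: the smooth data $\nabla G$ and $\nabla\Omega(\cdot)^{T}q$ converge by continuity (H$_2$, H$_4$), and the Bregman gradient term $\tfrac{1}{\epsilon^k}(\nabla K(u^{k+1})-\nabla K(u^k))$ tends to $0$ since $\nabla K$ is continuous (Assumption~\ref{assump2}) and $\epsilon^k\ge\underline{\epsilon}$, while the nonsmooth terms are handled by lower semicontinuity of $J$ (H$_1$) and convexity/continuity of $u\mapsto\langle q,\Phi(u)\rangle$ for $q\in\mathbf{C}^*$. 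This yields $L(\bar u,\bar p)\le L(u,\bar p)$ for all $u\in\mathbf{U}$. For the dual side, passing to the limit in $p^{k+1}=\Pi(p^k+\rho\Theta(u^{k+1}))$ gives the fixed-point relation $\bar p=\Pi(\bar p+\rho\Theta(\bar u))$; inserting this into the projection identities \eqref{eq:Projecproperty5}--\eqref{eq:Projecproperty6} produces $\rho\Theta(\bar u)=\Pi_{-\mathbf{C}}(\bar p+\rho\Theta(\bar u))\in-\mathbf{C}$ (primal feasibility) and $\langle\bar p,\Theta(\bar u)\rangle=0$ (complementary slackness), which together give $L(\bar u,p)\le L(\bar u,\bar p)$ for all $p\in\mathbf{C}^*$. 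Hence $(\bar u,\bar p)$ satisfies \eqref{saddle point:L}. The delicate points are the $\liminf$ bookkeeping through the nonsmooth terms and justifying continuity of $\langle q,\Phi(\cdot)\rangle$ at the limit.

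Finally, I would upgrade subsequential to full convergence by re-running the Fej\'er argument of the first paragraph with the specific saddle point $(\bar u,\bar p)$ in place of $(u^*,p^*)$: the sequence $\bar V^k=D(\bar u,u^k)+\tfrac{\epsilon^k}{2\gamma}\|\bar p-p^k\|^2$ is nonincreasing, and along $k_j$ it tends to $0$ (using continuity of $K$ and $\nabla K$ so that $D(\bar u,u^{k_j})\to0$, and $p^{k_j}\to\bar p$). A nonincreasing sequence with a subsequence converging to $0$ converges to $0$, so $\bar V^k\to0$; the lower bounds $D(\bar u,u^k)\ge\tfrac{\beta}{2}\|\bar u-u^k\|^2$ and $\epsilon^k\ge\underline{\epsilon}$ then give $u^k\to\bar u$ and $p^k\to\bar p$, which completes the proof.
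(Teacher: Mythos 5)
Your proposal is correct and follows essentially the same route as the paper: the displayed part of the paper's proof (Appendix A$_2$) is exactly your first paragraph --- specializing Lemma~\ref{lemma:bound1} at $(u^*,p^*)$ and invoking the saddle-point inequality to get monotone decrease of $D(u^*,u^k)+\tfrac{\epsilon^k}{2\gamma}\|p^*-p^k\|^2$ --- and the remaining steps are delegated to the argument of Cohen and Zhu~\cite{CohenZ}, which is precisely the telescoping, subsequence-extraction, limit-point-is-a-saddle-point, and Opial-type upgrade that you spell out. The details you fill in (vanishing of $\|u^{k+1}-u^k\|$ and $\|q^k-p^k\|$, passage to the limit in the variational inequality, and the fixed-point/Moreau-decomposition argument for dual feasibility and complementarity) are all sound under Assumptions~\ref{assump1} and~\ref{assump2}.
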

\begin{proof} See Appendix {\bf A$_2$}. \qed
\end{proof}
Next we analyze the convergence rate of VAPP. For any integer number $t$, let $\bar{u}_{t}=\frac{\sum_{k=0}^{t}\epsilon^ku^{k+1}}{\sum_{k=0}^{t}\epsilon^k}$ and $\bar{p}_{t}=\frac{\sum_{k=0}^{t}\epsilon^kq^{k}}{\sum_{k=0}^{t}\epsilon^k}$. For the case where $\epsilon^k=\epsilon$, one construct average point $\bar{u}_{t}=\frac{\sum_{k=0}^{t}u^{k+1}}{t+1}$ and $\bar{p}_{t}=\frac{\sum_{k=0}^{t}q^{k}}{t+1}$. The following theorem shows $\bar{u}_{t}$ is one approximation solution of (P) with $O(1/t)$, thus proving a convergence rate of $O(1/t)$ in the worst case for the VAPP algorithm.

\begin{theorem}\label{thm:ergodic_iteration_complexity} {\bf(Bifunction value estimation, primal suboptimality and feasibility for solving (P) by VAPP)}\\
\noindent Suppose Assumptions~\ref{assump1} and~\ref{assump2} hold, let $(u^*,p^*)$ be a saddle point, $M_0$ be a bound of dual optimal solution of (P), the parameter sequence $\{\epsilon^k\}$ satisfy~\eqref{eq:parameter}, and for any integer number $t>0$, we have $(\bar{u}_{t},\bar{p}_{t})\in\mathbf{U}\times\mathbf{C}^*$ and:\\
\begin{itemize}
\item[{\rm(i)}] Global estimate in bifunction values of (EP):
\begin{eqnarray*}
L(\bar{u}_{t},p)-L(u,\bar{p}_{t})\leq
\frac{D(u,u^0)+\frac{\epsilon^0}{2\gamma}\|p-p^{0}\|^2}{\underline{\epsilon}(t+1)},\;\forall (u,p)\in\mathbf{U}\times\mathbf{C}^*.
\end{eqnarray*}
\item[{\rm(ii)}] Feasibility:
$$\|\Pi\big{(}\Theta(\bar{u}_{t})\big{)}\|\leq\frac{d_1}{\underline{\epsilon}(t+1)},$$
where $d_1=\max\limits_{\|p\|\leq M_0+1}\big{[}D(u^*,u^0)+\frac{\epsilon^0}{2\gamma}\|p-p^{0}\|^2\big{]}$.
\item[{\rm(iii)}] Primal suboptimality:
$$-\frac{M_0d_1}{\underline{\epsilon}(t+1)}\leq(G+J)(\bar{u}_{t})-(G+J)(u^*)\leq \frac{d_1}{\underline{\epsilon}(t+1)}.$$
\end{itemize}
\end{theorem}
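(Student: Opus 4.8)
The plan is to build all three parts on top of Lemma~\ref{lemma:bound1}, treating part (i) as the workhorse and then specializing the test point $(u,p)$ to read off feasibility and primal suboptimality.

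First, for part (i), I would sum the descent inequality of Lemma~\ref{lemma:bound1} over $k=0,\dots,t$. The left-hand side telescopes, and since $D(u,u^{t+1})\geq 0$ together with $\frac{\epsilon^{t+1}}{2\gamma}\|p-p^{t+1}\|^2\geq 0$, while the residual $\Delta^k(u^k,u^{k+1})+\frac{\epsilon^k}{2\gamma}\|q^k-p^k\|^2$ is nonnegative by~\eqref{bounddelta} and the parameter choice~\eqref{eq:parameter}, both can be discarded to obtain
$$\sum_{k=0}^{t}\epsilon^k\big{[}L(u^{k+1},p)-L(u,q^k)\big{]}\leq D(u,u^0)+\frac{\epsilon^0}{2\gamma}\|p-p^0\|^2.$$
The key structural observation is that, for fixed $p\in\mathbf{C}^*$, the map $u'\mapsto L(u',p)=(G+J)(u')+\langle p,\Theta(u')\rangle$ is convex (using convexity of $G+J$ together with the $\mathbf{C}$-convexity of $\Theta$ and $p\in\mathbf{C}^*$), whereas $p'\mapsto L(u,p')$ is affine. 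Applying Jensen's inequality to the $\epsilon^k$-weighted convex combinations defining $\bar{u}_t$ and $\bar{p}_t$ yields $(\sum_k\epsilon^k)[L(\bar{u}_t,p)-L(u,\bar{p}_t)]\leq\sum_k\epsilon^k[L(u^{k+1},p)-L(u,q^k)]$, and dividing by $\sum_{k=0}^t\epsilon^k\geq\underline{\epsilon}(t+1)$ gives (i). The membership $(\bar{u}_t,\bar{p}_t)\in\mathbf{U}\times\mathbf{C}^*$ follows because each $u^{k+1}\in\mathbf{U}$ and each $q^k=\Pi(\cdot)\in\mathbf{C}^*$, and both sets are convex.

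Next, for the feasibility bound (ii), I would specialize (i) with $u=u^*$ and a dual test point tailored to the constraint residual. Writing $\Theta(\bar{u}_t)=\Pi(\Theta(\bar{u}_t))+\Pi_{-\mathbf{C}}(\Theta(\bar{u}_t))$ and invoking~\eqref{eq:Projecproperty5}--\eqref{eq:Projecproperty6} gives $\langle\Pi(\Theta(\bar{u}_t)),\Theta(\bar{u}_t)\rangle=\|\Pi(\Theta(\bar{u}_t))\|^2$. Choosing $p=p^*+\Pi(\Theta(\bar{u}_t))/\|\Pi(\Theta(\bar{u}_t))\|\in\mathbf{C}^*$, so that $\|p\|\leq\|p^*\|+1\leq M_0+1$, turns $\langle p,\Theta(\bar{u}_t)\rangle$ into $\langle p^*,\Theta(\bar{u}_t)\rangle+\|\Pi(\Theta(\bar{u}_t))\|$. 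Combined with $L(u^*,\bar{p}_t)\leq (G+J)(u^*)$ (since $\bar{p}_t\in\mathbf{C}^*$ and $\Theta(u^*)\in-\mathbf{C}$) and the saddle-point consequence $(G+J)(\bar{u}_t)-(G+J)(u^*)+\langle p^*,\Theta(\bar{u}_t)\rangle\geq 0$, all positive contributions drop out and only $\|\Pi(\Theta(\bar{u}_t))\|$ survives on the left; the numerator is bounded by $d_1$ precisely because $\|p\|\leq M_0+1$.

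Finally, for (iii), the upper bound follows by applying (i) with $u=u^*$ and $p=0\in\mathbf{C}^*$, again using $L(u^*,\bar{p}_t)\leq(G+J)(u^*)$ and $\|0\|\leq M_0+1$ to bound the numerator by $d_1$. The lower bound follows from the saddle-point inequality $(G+J)(u^*)\leq (G+J)(\bar{u}_t)+\langle p^*,\Theta(\bar{u}_t)\rangle$; splitting $\Theta(\bar{u}_t)$ as above and discarding the nonpositive term $\langle p^*,\Pi_{-\mathbf{C}}(\Theta(\bar{u}_t))\rangle$ gives $\langle p^*,\Theta(\bar{u}_t)\rangle\leq\|p^*\|\,\|\Pi(\Theta(\bar{u}_t))\|\leq M_0 d_1/(\underline{\epsilon}(t+1))$ by (ii) and $\|p^*\|\leq M_0$, whence the claimed $-M_0 d_1/(\underline{\epsilon}(t+1))$ lower bound. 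The main obstacle is part (i): correctly exploiting the convex/affine structure of the bifunction so that Jensen applies to the $\epsilon^k$-weighted averages, and verifying that the residual terms from Lemma~\ref{lemma:bound1} are genuinely nonnegative under~\eqref{eq:parameter}. Once (i) is secured, parts (ii) and (iii) are careful but essentially routine specializations driven by the projection identities and the saddle-point relations.
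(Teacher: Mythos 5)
Your proposal is correct and follows essentially the same route as the paper's proof: part (i) by telescoping the descent inequality of Lemma~\ref{lemma:bound1}, discarding the nonnegative residuals, and applying Jensen's inequality to the bifunction (convex in $u'$, linear in $p'$), and parts (ii)--(iii) by specializing the test points and invoking the projection identities \eqref{eq:Projecproperty5}--\eqref{eq:Projecproperty6} together with the saddle-point inequality $(G+J)(\bar{u}_t)-(G+J)(u^*)\geq-\langle p^*,\Theta(\bar{u}_t)\rangle$. The only cosmetic differences are your dual test point $p^*+\Pi(\Theta(\bar{u}_t))/\|\Pi(\Theta(\bar{u}_t))\|$ in (ii) — the paper uses $\hat{p}=(M_0+1)\Pi(\Theta(\bar{u}_t))/\|\Pi(\Theta(\bar{u}_t))\|$ and cancels the resulting $M_0\|\Pi(\Theta(\bar{u}_t))\|$ against the saddle-point lower bound — and your use of $p=0$ for the upper bound in (iii); you should also dispose of the trivial case $\|\Pi(\Theta(\bar{u}_t))\|=0$ before dividing by that norm, as the paper does.
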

\begin{proof} See Appendix {\bf A$_3$}.\qed
\end{proof}
Observe that Theorem~\ref{thm:ergodic_iteration_complexity} prompts VAPP to have the convergence rate $O(1/t)$ in the worst case. To obtain the dual suboptimality, we need the following additional assumption.
\begin{assumption}\label{assump3}
$G+J$ is coercive on $\mathbf{U}$, if $\mathbf{U}$ is not bounded, that is,
\begin{eqnarray*}
\forall \{u^{k}|k\in \mathbb{N}\}\subset \mathbf{U}, \lim_{k\rightarrow+\infty}\|u^{k}\|=+\infty\Rightarrow\lim_{k\rightarrow+\infty}(G+J)(u^{k})=+\infty.
\end{eqnarray*}
\end{assumption}
\indent The following lemma states that for any given bounded set of dual points, the corresponding optimizer of the augmented Lagrangian is bounded.
\begin{lemma}\label{lemma:ALBounded}
Suppose Assumptions~\ref{assump1} and~\ref{assump3} hold. Then we have a positive constant $d_u$, for any $p\in\RR^m$ and $\|p\|\leq d_p$, there is an optimizer $\hat{u}(p)\in\arg\min\limits_{u\in\mathbf{U}}L_{\gamma}(u,p)$ such that $\|\hat{u}(p)\|\leq d_u$.
\end{lemma}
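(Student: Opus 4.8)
The plan is to exploit the explicit formula $\varphi(\theta,p)=[\|\Pi(p+\gamma\theta)\|^2-\|p\|^2]/2\gamma$ for the penalty term, which pins $L_\gamma(\cdot,p)$ below by $(G+J)$ alone with a constant that is uniform over the ball $\|p\|\le d_p$, and then to invoke the coercivity in Assumption~\ref{assump3}. First I would dispose of the trivial case: if $\mathbf{U}$ is bounded, then $d_u:=\sup_{u\in\mathbf{U}}\|u\|$ works for every $p$ (and a minimizer exists by lower semicontinuity on the compact set $\mathbf{U}$), so assume henceforth that $\mathbf{U}$ is unbounded and Assumption~\ref{assump3} is in force.

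The core estimates are two sided. Since $\|\Pi(p+\gamma\theta)\|^2\ge 0$, the penalty obeys $\varphi(\theta,p)\ge -\|p\|^2/2\gamma\ge -d_p^2/2\gamma$ for every $\theta$ and every $\|p\|\le d_p$; hence $L_\gamma(u,p)\ge (G+J)(u)-d_p^2/2\gamma$ uniformly in such $p$. On the other side, I fix a reference point $u_0\in\mathbf{dom}J\cap\mathbf{U}$ (nonempty by (H$_1$)), and use the nonexpansiveness of $\Pi$ together with $\Pi(0)=0$ (property~\eqref{eq:Projecproperty4}, valid since $\mathbf{C}^*$ is a cone with vertex at the origin) to get $\varphi(\Theta(u_0),p)\le \|p+\gamma\Theta(u_0)\|^2/2\gamma\le (d_p+\gamma\|\Theta(u_0)\|)^2/2\gamma=:C$, so that $L_\gamma(u_0,p)\le (G+J)(u_0)+C$ uniformly in $\|p\|\le d_p$. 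Existence of a minimizer now follows because $L_\gamma(\cdot,p)$ is proper, convex and l.s.c. and is coercive on $\mathbf{U}$ (its lower bound dominates the coercive function $G+J$).

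Combining the two bounds at any optimizer $\hat u(p)$ gives $(G+J)(\hat u(p))-d_p^2/2\gamma\le L_\gamma(\hat u(p),p)\le L_\gamma(u_0,p)\le (G+J)(u_0)+C$, whence $(G+J)(\hat u(p))\le M$ for the $p$-independent constant $M:=(G+J)(u_0)+C+d_p^2/2\gamma$. By Assumption~\ref{assump3} the sublevel set $\{u\in\mathbf{U}:(G+J)(u)\le M\}$ is bounded, so choosing $d_u$ with this set contained in $\{\|u\|\le d_u\}$ bounds every minimizer and completes the argument. The routine parts are the two projection estimates; the only point requiring care is that all constants be genuinely independent of $p$ on $\|p\|\le d_p$, which is exactly what the explicit $\varphi$ formula delivers. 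I do not anticipate a serious obstacle, since the augmented-Lagrangian penalty is bounded below uniformly and coercivity of $G+J$ does the rest — the one conceptual step is recognizing that the penalty cannot drag $L_\gamma$ below a $p$-uniform shift of $(G+J)$.
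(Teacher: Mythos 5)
Your proof is correct, but it takes a genuinely different route from the paper's. The paper argues by contradiction: it supposes that for every $\kappa>0$ some $p^j$ with $\|p^j\|\leq d_p$ has all its minimizers of norm exceeding $\kappa$, extracts a sequence with $\|\hat{u}(p^j)\|\to+\infty$, uses the same key lower bound $\varphi(\theta,p)\geq-\|p\|^2/2\gamma$ that you use to conclude $\psi_\gamma(p^j)=L_\gamma(\hat{u}(p^j),p^j)\to+\infty$, and then contradicts this with the boundedness of $\psi_\gamma$ on the bounded set $\{p^j\}$, which it obtains from the (asserted but unproved) continuity of $\psi_\gamma$. You instead replace that appeal to continuity by the explicit upper bound $\psi_\gamma(p)\leq L_\gamma(u_0,p)\leq (G+J)(u_0)+C$ at a fixed reference point $u_0\in\mathbf{dom}J\cap\mathbf{U}$, obtained from nonexpansiveness of $\Pi$, and then read off a $p$-uniform bound on $(G+J)(\hat{u}(p))$ so that coercivity bounds the sublevel set directly. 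Your version buys three things: it is direct rather than by contradiction, it is self-contained (no unproved continuity of the dual function), and it yields the slightly stronger conclusion that \emph{every} minimizer lies in the ball $\|u\|\leq d_u$, together with an explicit value of $d_u$ in terms of $u_0$, $d_p$, $\gamma$ and a sublevel set of $G+J$; you also handle existence of a minimizer and the bounded-$\mathbf{U}$ case explicitly, which the paper leaves implicit. The paper's argument is shorter on the page but outsources the real work to properties of $\psi_\gamma$.
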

\proof See Appendix {\bf A$_4$}.\qed
\indent From Theorem~\ref{theo:general-convergence}, the sequence $\{(u^k,p^k)\}$ is bounded; therefore there exist positive number $\mu$ such that for all $k\in\mathbb{N}$, $\|u^k\|\leq\mu$ and $\|p^k\|\leq\mu$. Obviously we also have that $\|\bar{u}\|\leq\mu$ and $\|\bar{p}\|\leq\mu$. Moreover, we have that
\begin{eqnarray*}
\|q^k\|&\leq&\|q^k-p^{k+1}\|+\|p^{k+1}\|\leq\gamma\tau\|u^k-u^{k+1}\|+\|p^{k+1}\|\\
       &\leq&\gamma\tau(\|u^k\|+\|u^{k+1}\|)+\|p^{k+1}\|\leq(1+2\gamma\tau)\mu.
\end{eqnarray*}
Denote $\mathfrak{B}^{p}=\big{\{}p|\|p\|\leq r^p\big{\}}$ with $r^p=(1+2\gamma\tau)\mu$. Therefore, $p^k,\bar{p},q^k\in\mathfrak{B}^{p}$, $\forall k\in\mathbb{N}$. Furthermore, from Lemma~\ref{lemma:ALBounded} for $p\in\mathfrak{B}^{p}$, we have that $\hat{u}(p)\in\arg\min L_\gamma(u,p)$ and $\|\hat{u}(p)\|\leq d_u$. Specifically, we construct a ball as follows: $\mathfrak{B}^{u}=\{u|\|u\|\leq r^u \}$ with $r^u=\max(\mu,d_u)$. Then, $u^k\in\mathfrak{B}^{u}$ and $\hat{u}(p)\in\mathfrak{B}^{u}$ for every $p\in\mathfrak{B}^{p}$.\\
\indent The next theorem provides the convergence rate for approximate saddle point and dual suboptimality for VAPP.
\begin{theorem}\label{theo_2}{\bf(Approximate saddle point and dual suboptimality for solving (P) by VAPP)}\\
Suppose Assumption~\ref{assump1},~\ref{assump2} and~\ref{assump3} hold, let $(u^*,p^*)$ be saddle point. Then we have $(\bar{u}_{t}, \bar{p}_{t})\in(\mathbf{U}\cap\mathfrak{B}^{u})\times(\mathbf{C}^*\cap\mathfrak{B}^{p})$ and $\hat{u}(\bar{p}_{t})\in\mathbf{U}\cap\mathfrak{B}^{u}$, the following statements hold.
\begin{itemize}
\item[{\rm(i)}] Average point $(\bar{u}_t,\bar{p}_t)$ is an approximate saddle point of $L$:
$$-\frac{d_2}{\underline{\epsilon}(t+1)}+L(\bar{u}_{t},p)\leq L(\bar{u}_{t},\bar{p}_{t})\leq L(u,\bar{p}_{t})+\frac{d_2}{\underline{\epsilon}(t+1)}, \forall (u,p)\in(\mathbf{U}\cap\mathfrak{B}^{u})\times(\mathbf{C}^*\cap\mathfrak{B}^{p})$$
where $d_2=\max_{(u,p)\in(\mathbf{U}\cap\mathfrak{B}^{u})\times(\mathbf{C}^*\cap\mathfrak{B}^{p})}\big{[}D(u,u^0)+\frac{\epsilon^0}{2\gamma}\|p-p^{0}\|^2\big{]}$.
\item[{\rm(ii)}] Average point $(\bar{u}_t,\bar{p}_t)$ is an approximate saddle point of $L_\gamma$:
\begin{eqnarray*}
-\frac{r^pd_1+d_2}{\underline{\epsilon}(t+1)}-\frac{\gamma(d_1)^2}{2\underline{\epsilon}^2(t+1)^2}+L_{\gamma}(\bar{u}_{t},p)\leq L_{\gamma}(\bar{u}_{t},\bar{p}_{t})\leq L_{\gamma}(u,\bar{p}_{t})+\frac{r^pd_1+2d_2}{\underline{\epsilon}(t+1)}+\frac{\gamma(d_1)^2}{2\underline{\epsilon}^2(t+1)^2}.\\ \forall (u,p)\in(\mathbf{U}\cap\mathfrak{B}^{u})\times(\mathbf{C}^*\cap\mathfrak{B}^{p})
\end{eqnarray*}
\item[{\rm(iii)}] The existence on dual suboptimality is provided by average point $\bar{p}_t$:
\begin{eqnarray*}
    \psi_\gamma(p^*)\leq\psi_\gamma(\bar{p}_{t})+\frac{2r^pd_1+3d_2}{\underline{\epsilon}(t+1)}+\frac{\gamma (d_1)^2}{\underline{\epsilon}^2(t+1)^2}.
\end{eqnarray*}
\end{itemize}
\end{theorem}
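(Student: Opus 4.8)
The plan is to derive all three parts from the single ergodic estimate of Theorem~\ref{thm:ergodic_iteration_complexity} together with two elementary inequalities that relate the augmented Lagrangian $L_\gamma$ to the ordinary Lagrangian $L$ through $\varphi$. The engine is the pair of bounds: for every $p\in\mathbf{C}^*$ and $\theta\in\RR^m$,
\begin{eqnarray*}
\langle p,\theta\rangle\;\le\;\varphi(\theta,p)\;\le\;\langle p,\Pi(\theta)\rangle+\frac{\gamma}{2}\|\Pi(\theta)\|^2 .
\end{eqnarray*}
The left inequality comes from inserting $q=p$ into the max-form~\eqref{func:varphi_2}; the right one from writing $\theta=\Pi(\theta)+\Pi_{-\mathbf{C}}(\theta)$ via~\eqref{eq:Projecproperty5}--\eqref{eq:Projecproperty6}, using $\langle q,\Pi_{-\mathbf{C}}(\theta)\rangle\le0$ for $q\in\mathbf{C}^*$ to drop the feasible part, and evaluating $\varphi(\Pi(\theta),p)$ in closed form (its maximizer is $p+\gamma\Pi(\theta)\in\mathbf{C}^*$). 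Consequently $L_\gamma(u,p)\ge L(u,p)$ whenever $p\in\mathbf{C}^*$, and, by the feasibility estimate Theorem~\ref{thm:ergodic_iteration_complexity}(ii), $\varphi(\Theta(\bar u_t),p)\le\tfrac{r^pd_1}{\underline\epsilon(t+1)}+\tfrac{\gamma(d_1)^2}{2\underline\epsilon^2(t+1)^2}$ for all $p\in\mathfrak{B}^p$.

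Part (i) is a direct specialization. On $(\mathbf{U}\cap\mathfrak{B}^u)\times(\mathbf{C}^*\cap\mathfrak{B}^p)$ the right-hand side of Theorem~\ref{thm:ergodic_iteration_complexity}(i) is at most $d_2$, so $L(\bar u_t,p)-L(u,\bar p_t)\le\tfrac{d_2}{\underline\epsilon(t+1)}$ there; taking $u=\bar u_t$ gives the left inequality and $p=\bar p_t$ the right inequality, using $(\bar u_t,\bar p_t)\in(\mathbf{U}\cap\mathfrak{B}^u)\times(\mathbf{C}^*\cap\mathfrak{B}^p)$ established before the theorem. For the \emph{right} inequality of part (ii) I apply the $\varphi$ upper bound at $p=\bar p_t$ to get $L_\gamma(\bar u_t,\bar p_t)\le(G+J)(\bar u_t)+\tfrac{r^pd_1}{\underline\epsilon(t+1)}+\tfrac{\gamma(d_1)^2}{2\underline\epsilon^2(t+1)^2}$, then apply Theorem~\ref{thm:ergodic_iteration_complexity}(i) with dual argument $p=0$ and $L\le L_\gamma$ to obtain $(G+J)(\bar u_t)=L(\bar u_t,0)\le L(u,\bar p_t)+\tfrac{d_2}{\underline\epsilon(t+1)}\le L_\gamma(u,\bar p_t)+\tfrac{d_2}{\underline\epsilon(t+1)}$ for every $u\in\mathbf{U}\cap\mathfrak{B}^u$.

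For the \emph{left} inequality of part (ii) I pair the upper bound $L_\gamma(\bar u_t,p)\le(G+J)(\bar u_t)+\tfrac{r^pd_1}{\underline\epsilon(t+1)}+\tfrac{\gamma(d_1)^2}{2\underline\epsilon^2(t+1)^2}$ (the $\varphi$ upper bound, now in the free $p\in\mathfrak{B}^p$) with the matching lower bound $L_\gamma(\bar u_t,\bar p_t)\ge L(\bar u_t,\bar p_t)\ge L(\bar u_t,0)-\tfrac{d_2}{\underline\epsilon(t+1)}=(G+J)(\bar u_t)-\tfrac{d_2}{\underline\epsilon(t+1)}$, where the first step is $L_\gamma\ge L$ and the second is part (i) at $p=0$; subtracting gives the claim. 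Part (iii) then chains part (ii) with weak duality: by Lemma~\ref{lemma:ALBounded}, $\hat u(\bar p_t)\in\mathbf{U}\cap\mathfrak{B}^u$ and $\psi_\gamma(\bar p_t)=L_\gamma(\hat u(\bar p_t),\bar p_t)$, so part (ii) right at $u=\hat u(\bar p_t)$ yields $L_\gamma(\bar u_t,\bar p_t)\le\psi_\gamma(\bar p_t)+\tfrac{r^pd_1+2d_2}{\underline\epsilon(t+1)}+\tfrac{\gamma(d_1)^2}{2\underline\epsilon^2(t+1)^2}$; since $\|p^*\|\le M_0\le\mu\le r^p$ gives $p^*\in\mathbf{C}^*\cap\mathfrak{B}^p$, part (ii) left at $p=p^*$ together with $L_\gamma(\bar u_t,p^*)\ge\min_uL_\gamma(u,p^*)=\psi_\gamma(p^*)$ yields $\psi_\gamma(p^*)\le L_\gamma(\bar u_t,\bar p_t)+\tfrac{r^pd_1+d_2}{\underline\epsilon(t+1)}+\tfrac{\gamma(d_1)^2}{2\underline\epsilon^2(t+1)^2}$. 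Adding the two budgets gives exactly $\tfrac{2r^pd_1+3d_2}{\underline\epsilon(t+1)}+\tfrac{\gamma(d_1)^2}{\underline\epsilon^2(t+1)^2}$.

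The main obstacle to anticipate is precisely the upper bound on $\varphi$. A naive pointwise comparison of $L_\gamma$ and $L$ fails, because $\varphi(\theta,p)-\langle p,\theta\rangle$ is \emph{not} controlled by the feasibility $\|\Pi(\theta)\|$ alone: it contains the complementarity term $-\langle p,\Pi_{-\mathbf{C}}(\theta)\rangle$, which is one-signed but can grow without bound along deeply feasible directions, so the maximizer $\Pi(p+\gamma\Theta(\bar u_t))$ cannot be assumed to lie in $\mathfrak{B}^p$. The argument must therefore be arranged to use only the one-sided $\varphi$-bound that sees the \emph{infeasible} part $\Pi(\theta)$, and to obtain the required lower bound on $L_\gamma(\bar u_t,\bar p_t)$ indirectly through $L$ and the $p=0$ instance of part (i), rather than by trying to two-side that difference.
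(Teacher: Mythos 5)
Your proposal is correct and follows essentially the same route as the paper's proof in Appendix {\bf A$_5$}: part (i) by restricting the ergodic estimate of Theorem~\ref{thm:ergodic_iteration_complexity}(i) to $(\mathbf{U}\cap\mathfrak{B}^{u})\times(\mathbf{C}^*\cap\mathfrak{B}^{p})$ and specializing $u=\bar{u}_t$, $p=\bar{p}_t$; part (ii) by sandwiching $\varphi(\Theta(\bar{u}_t),\cdot)$ between $\langle\cdot,\Theta(\bar{u}_t)\rangle$ and $\langle\cdot,\Pi(\Theta(\bar{u}_t))\rangle+\frac{\gamma}{2}\|\Pi(\Theta(\bar{u}_t))\|^2$ combined with the feasibility bound of Theorem~\ref{thm:ergodic_iteration_complexity}(ii); and part (iii) by chaining part (ii) at $p=p^*$ and $u=\hat{u}(\bar{p}_t)$ with the saddle-point property of $L_\gamma$ and the definition of $\psi_\gamma$. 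The only cosmetic differences are that you obtain the $\varphi$ upper bound from the max-form~\eqref{func:varphi_2} rather than by taking $\xi=\Pi_{-\mathbf{C}}(\Theta(\bar{u}_t))$ in the min-form~\eqref{func:varphi_1} (same bound either way), and your bookkeeping for the right inequality of (ii) routes through $L(\bar{u}_t,0)=(G+J)(\bar{u}_t)$ and in fact yields the slightly sharper constant $r^pd_1+d_2$ in place of $r^pd_1+2d_2$, which still implies the stated claim.
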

\begin{proof}See Appendix {\bf A$_5$}.\qed
\end{proof}
\indent Therefore $(\bar{u}_{t},\bar{p}_{t})$ is an approximate saddle point of Lagrangian of (P) with accuracy of $O(1/t)$.
\subsection{Convergence rate analysis of VAPP for strongly convex problem (P)}
In this subsection, we consider strongly convex problem (P) where $G$ is strongly convex with modulus $\beta_G$. For the case where $J$ is strongly convex with modulus $\beta_J>0$ and $G$ is only convex, we can let $J\leftarrow J-\frac{\beta_J}{2}\|\cdot\|^2$ and $G\leftarrow G+\frac{\beta_J}{2}\|\cdot\|^2$. In order to obtain better convergence for solving (P), we modify the VAPP scheme with variable parameters as follows:
\begin{eqnarray}\label{a_parameter_choice}
\rho^k=(k+1)\eta\quad\mbox{and}\quad\epsilon^k=\frac{1}{(k+1)\eta\tau^2+B_G+B_{\Omega}+\beta_G},
\end{eqnarray}
with $\eta=\frac{\beta_G}{2\tau^2}$. Denote
\begin{equation}\label{eq:akbk}
a^k=(c_0+k)\left[\frac{1}{2\epsilon^k}-\frac{\beta_G}{2}\right],\quad\mbox{and}\quad b^k=\frac{c_0+k}{2\rho^k},
\end{equation}
with $c_0=\frac{2(B_G+B_{\Omega})}{\beta_G}+2$. Note that $c_0\geq1$, and by the definition of $\eta$, we have
\begin{equation}\label{eq:akbkbound}
a^k\geq\frac{\beta_G}{4}(k+1)^2,\quad\mbox{and}\quad b^k\geq\frac{1}{2\eta}.
\end{equation}
We modify VAPP for strongly convex case as VAPP-S as following. For simplicity, we take $K(u)=\frac{\|u\|^2}{2}$.
\begin{eqnarray}
\begin{array}{l}
\mbox{\bf VAPP-S Algorithm:}\\
\left\{\begin{array}{l}
u^{k+1}\leftarrow\min\limits_{u\in \mathbf{U}}\langle\nabla G(u^{k}), u\rangle + J(u)+ \langle\tilde{q}^k, \nabla\Omega(u^k)u+\Phi(u)\rangle+\frac{\|u-u^k\|^2}{2\epsilon^k};\\
p^{k+1}\leftarrow \Pi\big{(}p^{k}+\rho^k\Theta(u^{k+1})\big{)}.
\end{array}
\right.
\end{array}
\end{eqnarray}
where $\tilde{q}^k=\Pi\big{(}p^{k}+\rho^k\Theta(u^{k})\big{)}$. Let us consider a new iteration-based distance function $a^k\|u-u'\|^2+b^k\|p-p'\|^2$, the descent property of which is given by the following lemma.
\begin{lemma}\label{lemma:abound1} {\bf (Descent inequalities of generalized distance function for strongly convex (P))}
Let Assumptions~\ref{assump1} and~\ref{assump2} hold, $G$ is strongly convex with constant $\beta_G$, take parameters $\epsilon^k$ and $\rho^k$ satisfy~\eqref{a_parameter_choice}, and $\{(u^k,p^k)\}$ is generated by VAPP, for all $u\in\mathbf{U}$, $p\in\mathbf{C}^*$ and $k\in\mathbb{N}$, then it holds that
\begin{eqnarray*}
&&\bigg{\{}a^{k+1}\|u-u^{k+1}\|^2+b^{k+1}\|p-p^{k+1}\|^2\bigg{\}}-\bigg{\{}a^k\|u-u^k\|^2+b^k\|p-p^{k}\|^2\bigg{\}}\\
&\leq&(c_0+k)[L(u,\tilde{q}^k)-L(u^{k+1},p)]-\frac{c_0\beta_G}{2}\|u^k-u^{k+1}\|^2-\frac{1}{2\eta}\|\tilde{q}^k-p^k\|^2
\end{eqnarray*}
\end{lemma}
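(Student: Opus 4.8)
The plan is to mirror the proof of Lemma~\ref{lemma:bound1}, but now tracking the iteration-dependent weights $a^k,b^k$ and exploiting the $\beta_G$-strong convexity of $G$ in place of a fixed Bregman term (recall VAPP-S takes $K(u)=\|u\|^2/2$, so $D(u,v)=\tfrac12\|u-v\|^2$). Since the $u$-subproblem minimizes a function that is $\tfrac{1}{\epsilon^k}$-strongly convex (the quadratic $\tfrac{1}{2\epsilon^k}\|u-u^k\|^2$ plus convex terms), I would first record its optimality inequality: for every $u\in\mathbf{U}$,
\begin{align*}
&\langle\nabla G(u^k),u^{k+1}-u\rangle + J(u^{k+1})-J(u) + \langle\tilde{q}^k,\nabla\Omega(u^k)(u^{k+1}-u)+\Phi(u^{k+1})-\Phi(u)\rangle\\
&\qquad\leq \tfrac{1}{2\epsilon^k}\|u-u^k\|^2-\tfrac{1}{2\epsilon^k}\|u-u^{k+1}\|^2-\tfrac{1}{2\epsilon^k}\|u^{k+1}-u^k\|^2,
\end{align*}
the right-hand side arising from the three-point identity applied to the proximal term.

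Next I would convert the linearized quantities into true Lagrangian values. The descent inequality~\eqref{eq:Lip_G} together with the $\beta_G$-strong convexity of $G$ replaces $\langle\nabla G(u^k),u^{k+1}-u\rangle$ by its lower bound $G(u^{k+1})-G(u)-\tfrac{B_G}{2}\|u^{k+1}-u^k\|^2+\tfrac{\beta_G}{2}\|u-u^k\|^2$; likewise \eqref{eq:Lip_Omega} and the $\mathbf{C}$-convexity~\eqref{Theta_C_convex} of $\Omega$ (using $\tilde{q}^k\in\mathbf{C}^*$) replace the $\Omega$-term by $\langle\tilde{q}^k,\Omega(u^{k+1})-\Omega(u)\rangle-\tfrac{B_\Omega}{2}\|u^{k+1}-u^k\|^2$. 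Collecting terms and recalling $L(v,q)=(G+J)(v)+\langle q,\Theta(v)\rangle$ turns the optimality inequality into a bound on $L(u^{k+1},\tilde{q}^k)-L(u,\tilde{q}^k)$; writing $L(u^{k+1},\tilde{q}^k)-L(u^{k+1},p)=\langle\tilde{q}^k-p,\Theta(u^{k+1})\rangle$ isolates the primal gap $L(u^{k+1},p)-L(u,\tilde{q}^k)$ plus this cross term. Multiplying through by $(c_0+k)$ and using~\eqref{a_parameter_choice}, the $u$-distance terms telescope: by definition $a^k=(c_0+k)\big(\tfrac{1}{2\epsilon^k}-\tfrac{\beta_G}{2}\big)$, while the parameter choice forces the nontrivial identity $(c_0+k)\tfrac{1}{2\epsilon^k}=a^{k+1}$, so the coefficients become $a^k\|u-u^k\|^2-a^{k+1}\|u-u^{k+1}\|^2$.

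For the dual I would split the scaled cross term as $(c_0+k)\langle\tilde{q}^k-p,\Theta(u^{k+1})\rangle=(c_0+k)\langle p^{k+1}-p,\Theta(u^{k+1})\rangle+(c_0+k)\langle\tilde{q}^k-p^{k+1},\Theta(u^{k+1})\rangle$. The first piece is bounded below by $b^{k+1}\|p-p^{k+1}\|^2+b^k\|p^{k+1}-p^k\|^2-b^k\|p-p^k\|^2$, using property~\eqref{eq:Projecproperty3} at $p^{k+1}=\Pi(p^k+\rho^k\Theta(u^{k+1}))$, the three-point identity, the definition $b^k=\tfrac{c_0+k}{2\rho^k}$ from~\eqref{eq:akbk}, and the monotonicity $b^{k+1}\le b^k$ (valid since $c_0\ge1$). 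The second piece is bounded below via Proposition~\ref{proposition} with $w=p^k$, $u=\rho^k\Theta(u^{k+1})$, $v=\rho^k\Theta(u^k)$ so that $\Pi_{\mathcal{S}}(w+u)=p^{k+1}$ and $\Pi_{\mathcal{S}}(w+v)=\tilde{q}^k$, combined with the Lipschitz bound~\eqref{Theta_Lipschitz}, yielding $-b^k(\rho^k)^2\tau^2\|u^{k+1}-u^k\|^2+b^k\|\tilde{q}^k-p^k\|^2-b^k\|p^{k+1}-p^k\|^2$. Adding the two pieces, the $\|p^{k+1}-p^k\|^2$ terms cancel, leaving the $b$-telescoping together with $+b^k\|\tilde{q}^k-p^k\|^2$ and a remaining $\|u^{k+1}-u^k\|^2$ term.

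Finally I would assemble the primal and dual estimates. The $\|u^{k+1}-u^k\|^2$ coefficient collapses because $b^k(\rho^k)^2\tau^2=\tfrac{(c_0+k)(k+1)\eta\tau^2}{2}$ exactly cancels the matching part of $(c_0+k)\tfrac{(k+1)\eta\tau^2+\beta_G}{2}$, leaving precisely $-\tfrac{(c_0+k)\beta_G}{2}$; bounding $(c_0+k)\ge c_0$ and $b^k\ge\tfrac{1}{2\eta}$ from~\eqref{eq:akbkbound} then produces the stated $-\tfrac{c_0\beta_G}{2}\|u^k-u^{k+1}\|^2-\tfrac{1}{2\eta}\|\tilde{q}^k-p^k\|^2$. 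I expect the main obstacle to be purely the algebraic bookkeeping: verifying that the specific choices $\rho^k=(k+1)\eta$, $\epsilon^k=\big((k+1)\eta\tau^2+B_G+B_\Omega+\beta_G\big)^{-1}$, $\eta=\tfrac{\beta_G}{2\tau^2}$ and $c_0=\tfrac{2(B_G+B_\Omega)}{\beta_G}+2$ make the telescoping identity $(c_0+k)\tfrac{1}{2\epsilon^k}=a^{k+1}$, the monotonicity $b^{k+1}\le b^k$, and the exact cancellation of the cross $\|u^{k+1}-u^k\|^2$ term hold \emph{simultaneously}; these are the nontrivial consistency conditions that in fact dictate the parameter formulas.
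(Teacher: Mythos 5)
Your proposal is correct and follows exactly the route the paper intends: the paper's own proof of this lemma is a one-line reference to "similar arguments as in the proof of Lemma~\ref{lemma:bound1} using strong convexity," and you have carried out precisely that adaptation, with the weighted telescoping identities $(c_0+k)\tfrac{1}{2\epsilon^k}=a^{k+1}$, $b^{k+1}\le b^k$, and the exact cancellation of the $\|u^{k+1}-u^k\|^2$ cross term all verified to hold under the stated parameter choices. No gaps.
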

\proof From the strongly convexity, the assertion is derived easily by the similar arguments in proof of Lemma~\ref{lemma:bound1} (see {\bf A$_1$} in Appendix).
\qed
Based Lemma~\ref{lemma:abound1}, we establish the following convergence analysis of VAPP-S for strongly convex problem.
\begin{theorem}[Convergence analysis of VAPP-S for strongly convex (P)]\label{theo:aconvergence}
Let assumptions of Lemma~\ref{lemma:abound1} hold, then the sequence $\{(u^{k},p^{k})\}$ generated by VAPP-S is bounded and converges to $(u^{*},p^{*})$, which is the saddle point of $L$ over $\mathbf{U}\times\mathbf{C}^*$
\end{theorem}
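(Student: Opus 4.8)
The plan is to reproduce, at the level of the iteration-dependent metric, the Fej\'er/Lyapunov argument behind Theorem~\ref{theo:general-convergence}, now with the distance $V^k(u,p):=a^k\|u-u^k\|^2+b^k\|p-p^k\|^2$ playing the role of $D(u,u^k)+\frac{\epsilon^k}{2\gamma}\|p-p^k\|^2$. Let $(u^*,p^*)$ be a saddle point of $L$ on $\mathbf{U}\times\mathbf{C}^*$; since $G+J$ is strongly convex, the primal part $u^*$ is the unique primal optimizer. Setting $(u,p)=(u^*,p^*)$ in Lemma~\ref{lemma:abound1} and invoking the equilibrium inequality~\eqref{VIS}, i.e. $L(u^*,\tilde q^k)-L(u^{k+1},p^*)\le0$, the coupling term drops out and
\begin{equation*}
V^{k+1}(u^*,p^*)-V^k(u^*,p^*)\le-\tfrac{c_0\beta_G}{2}\|u^k-u^{k+1}\|^2-\tfrac{1}{2\eta}\|\tilde q^k-p^k\|^2\le0 .
\end{equation*}
Thus $\{V^k(u^*,p^*)\}$ is nonincreasing and bounded below, hence convergent, and $V^k(u^*,p^*)\le V^0(u^*,p^*)$ for every $k$.

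Next I would extract boundedness, the primal rate, and asymptotic regularity from the coefficient estimates~\eqref{eq:akbkbound}. Since $a^k\ge\frac{\beta_G}{4}(k+1)^2$ and $b^k\ge\frac{1}{2\eta}$, the bound on $V^k(u^*,p^*)$ gives $\|u^k-u^*\|=O(1/k)$, so $u^k\to u^*$ directly, and $\|p^k-p^*\|\le\sqrt{2\eta V^0(u^*,p^*)}$, so $\{p^k\}$, and hence $\{(u^k,p^k)\}$, is bounded. Telescoping the descent inequality and using $V^k\ge0$ yields $\sum_k\|u^k-u^{k+1}\|^2<\infty$ and $\sum_k\|\tilde q^k-p^k\|^2<\infty$, whence $\|u^{k+1}-u^k\|\to0$ and $\|\tilde q^k-p^k\|\to0$.

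It then remains to identify the dual limit. I would take any subsequential limit $p^{k_j}\to\bar p\in\mathbf{C}^*$ (possible since $\{p^k\}$ is bounded and $\mathbf{C}^*$ closed); then $\tilde q^{k_j}\to\bar p$ as well. Applying the projection inequality~\eqref{eq:Projecproperty3} to $\tilde q^k=\Pi\big(p^k+\rho^k\Theta(u^k)\big)$, dividing by $\rho^k\to\infty$, and using $\|\tilde q^k-p^k\|\to0$ with $u^{k_j}\to u^*$, I obtain $\langle p-\bar p,\Theta(u^*)\rangle\le0$ for all $p\in\mathbf{C}^*$; since $\mathbf{C}^*$ is a cone this delivers feasibility $\Theta(u^*)\in-\mathbf{C}$ and complementarity $\langle\bar p,\Theta(u^*)\rangle=0$. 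To upgrade $(u^*,\bar p)$ to a saddle point I must still pass to the limit in the primal optimality condition so that $u^*\in\arg\min_{u\in\mathbf{U}}L(u,\bar p)$, and here lies the main obstacle: the proximal term $(u^{k+1}-u^k)/\epsilon^k$ in that condition does not automatically vanish, because—unlike the constant-step regime $\epsilon^k\ge\underline{\epsilon}$ of Theorem~\ref{theo:general-convergence}—here $\epsilon^k\downarrow0$ while $\|u^{k+1}-u^k\|$ is only of order $1/k$. Controlling this residual (combining the primal rate $\|u^k-u^*\|=O(1/k)$ with the summability $\sum_k\|u^k-u^{k+1}\|^2<\infty$ along a suitably chosen subsequence) is the crux; once it is driven to zero, lower semicontinuity of $J$ and continuity of $\nabla G,\nabla\Omega,\Phi$ give $u^*\in\arg\min_{u\in\mathbf{U}}L(u,\bar p)$, so $(u^*,\bar p)$ is a saddle point.

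Finally I would upgrade subsequential to full convergence of $\{p^k\}$ by an Opial-type argument tailored to the unbounded coefficient $a^k$. For any two subsequential dual limits $\bar p,\tilde p$, both are saddle-point multipliers for the unique $u^*$, so by the first step the Lyapunov values $V^k(u^*,\bar p)$ and $V^k(u^*,\tilde p)$ are each convergent; their difference equals $b^k\big(\|\bar p-p^k\|^2-\|\tilde p-p^k\|^2\big)$, in which the divergent term $a^k\|u^*-u^k\|^2$ cancels. As $b^k\to\frac{1}{2\eta}>0$, this forces $\langle\bar p-\tilde p,p^k\rangle$ to converge; evaluating the limit along the two subsequences gives $\|\bar p-\tilde p\|^2=0$. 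Hence all subsequential limits coincide and, with boundedness, $p^k\to p^*$; together with $u^k\to u^*$ this proves $(u^k,p^k)\to(u^*,p^*)$, a saddle point of $L$. I expect the limit identification in the third step—controlling $(u^{k+1}-u^k)/\epsilon^k$ under $\epsilon^k\downarrow0$—to be the only genuinely new difficulty relative to the general-convex Theorem~\ref{theo:general-convergence}.
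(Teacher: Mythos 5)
Your overall strategy is the same one the paper uses: take $(u,p)=(u^*,p^*)$ in Lemma~\ref{lemma:abound1}, use the saddle-point inequality to kill the bifunction term, and conclude that the Lyapunov quantity $a^k\|u^*-u^k\|^2+b^k\|p^*-p^k\|^2$ is nonincreasing; the paper then simply says ``the desired result is derived by a similar argument of~\cite{CohenZ}.'' Your first two steps (monotone decrease, $\|u^k-u^*\|=O(1/k)$ from $a^k\geq\frac{\beta_G}{4}(k+1)^2$, boundedness of $\{p^k\}$ from $b^k\geq\frac{1}{2\eta}$, and summability of $\|u^k-u^{k+1}\|^2$ and $\|\tilde q^k-p^k\|^2$ by telescoping) are correct and are exactly what that reference-to-Cohen--Zhu argument would begin with. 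Your final Opial-type step is also sound \emph{conditional on} every dual cluster point being a multiplier paired with $u^*$.

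The gap is the one you yourself flag, and it is genuine: identifying a dual cluster point $\bar p$ as a multiplier requires passing to the limit in the variational inequality characterizing $u^{k+1}$, whose proximal term is $\frac{1}{\epsilon^k}\langle u^{k+1}-u^k,\,u-u^{k+1}\rangle$. In the general convex setting of Theorem~\ref{theo:general-convergence} this term vanishes because $\epsilon^k\geq\underline{\epsilon}>0$ and $\|u^{k+1}-u^k\|\to0$; here, by~\eqref{a_parameter_choice}, $1/\epsilon^k$ grows like $k$, while the available estimates only give $\|u^{k+1}-u^k\|=O(1/k)$ (from the primal rate) and $\sum_k\|u^k-u^{k+1}\|^2<\infty$ with \emph{constant} weights. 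Neither of these forces $k\,\|u^{k+1}-u^k\|\to0$, even along a subsequence, so the residual is merely $O(1)$ and your proof as written does not close. This is not a defect you introduced: the paper's one-line appeal to ``a similar argument of~\cite{CohenZ}'' silently assumes the constant-$\epsilon$ mechanism and does not address the degeneration $\epsilon^k\downarrow0$ either. To finish, you would need an additional ingredient -- e.g.\ a refined telescoping of Lemma~\ref{lemma:abound1} that produces summability of $(c_0+k)\,\beta_G\|u^k-u^{k+1}\|^2$ or some other growing-weight control of the increments, or an identification of the dual limit that bypasses the primal optimality condition (for instance via the ergodic bounds of Theorem~\ref{theo:aconvergence_rate} combined with uniqueness arguments). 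As it stands, the proposal correctly reproduces the paper's descent analysis but leaves the cluster-point identification, and hence the convergence of $\{p^k\}$, unproven.
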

\proof
Taking $u=u^{*}$ and $p=p^*$ in Lemma~\ref{lemma:abound1}, we conclude that the sequence $a^k\|u^*-u^k\|^2+b^k\|p^*-p^{k}\|^2$ is strictly decreasing, unless $u^k=u^{k+1}$ and $p^k=\tilde{q}^k$ or $p^k=p^{k+1}$. The desired result is derived by a similar argument of~\cite{CohenZ}.
\qed
For any integer number $t$, let $\bar{u}_{t}=\frac{\sum_{k=0}^{t}(c_0+k)u^{k+1}}{\sum_{k=0}^{t}(c_0+k)}$ and $\bar{p}_{t}=\frac{\sum_{k=0}^{t}(c_0+k)\tilde{q}^{k}}{\sum_{k=0}^{t}(c_0+k)}$. Obviously that $\sum_{k=0}^{t}(c_0+k)=\frac{1}{2}(t+1)(t+2c_0)$. Therefore, we have that $(\bar{u}_t,\bar{p}_t)\in\mathbf{U}\times\mathbf{C}^*$ and $(u^*,p^*)\in\mathbf{U}\times\mathbf{C}^*$.
Then we can get the following convergence rate analysis.
\begin{theorem}[Primal error bound, bifunction value, primal suboptimality and feasibility of VAPP-S for strongly convex (P)]\label{theo:aconvergence_rate}
Let assumptions of Lemma~\ref{lemma:abound1} hold, then
\begin{itemize}
\item[{\rm(i)}] Global estimate in primal error bound value:
$$\|u^*-u^{t}\|^2\leq o(1/t^2);$$
\item[{\rm(ii)}] Global estimate in bifunction value of (EP):
\begin{equation}\label{GEB}
L(\bar{u}_t,p)-L(u,\bar{p}_t)\leq\frac{2a^0\|u-u^0\|^2+2b^0\|p-p^{0}\|^2}{(t+1)(t+2c_0)},\quad\forall (u,p)\in\mathbf{U}\times\mathbf{C}^*.
\end{equation}
\item[{\rm(iii)}] Feasibility: $$\|\Pi\big{(}\Theta(\bar{u}_{t})\big{)}\|\leq O(1/t^2).$$
\item[{\rm(iv)}] Primal suboptimality: $$-O(1/t^2)\leq(G+J)(\bar{u}_{t})-(G+J)(u^*)\leq O(1/t^2).$$
\end{itemize}
\end{theorem}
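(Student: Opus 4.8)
The plan is to make the telescoping descent inequality of Lemma~\ref{lemma:abound1} the single engine behind all four parts, summing it over $k=0,\dots,t$ and then specializing the test pair $(u,p)$ to read off each estimate. For part~(ii) I would sum the inequality of Lemma~\ref{lemma:abound1} from $k=0$ to $k=t$. The left-hand side telescopes to $\{a^{t+1}\|u-u^{t+1}\|^2+b^{t+1}\|p-p^{t+1}\|^2\}-\{a^0\|u-u^0\|^2+b^0\|p-p^0\|^2\}$, whose terminal bracket is nonnegative and may be dropped, while the two quadratic remainders on the right are nonnegative and may also be dropped, leaving $\sum_{k=0}^t(c_0+k)[L(u,\tilde q^k)-L(u^{k+1},p)]$. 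Since $L(u,\cdot)$ is linear in its second argument and $L(\cdot,p)$ is convex in its first, and since $\sum_{k=0}^t(c_0+k)=\tfrac12(t+1)(t+2c_0)$, Jensen's inequality applied through the definitions of $\bar u_t$ and $\bar p_t$ gives $\sum_{k=0}^t(c_0+k)[L(u,\tilde q^k)-L(u^{k+1},p)]\le\tfrac12(t+1)(t+2c_0)[L(u,\bar p_t)-L(\bar u_t,p)]$. Combining with the telescoped left-hand side and discarding the nonnegative terminal bracket yields exactly~\eqref{GEB}.

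Part~(i) is where the real work lies, because the claim is $o(1/t^2)$ rather than merely $O(1/t^2)$. First I would set $(u,p)=(u^*,p^*)$ in Lemma~\ref{lemma:abound1}. Strong convexity of $G$ together with the saddle-point inequality~\eqref{VIS} upgrades the bifunction bracket to $L(u^*,\tilde q^k)-L(u^{k+1},p^*)\le-\tfrac{\beta_G}{2}\|u^{k+1}-u^*\|^2$, so that, writing $V_k=a^k\|u^*-u^k\|^2+b^k\|p^*-p^k\|^2$, one obtains $V_{k+1}-V_k\le-(c_0+k)\tfrac{\beta_G}{2}\|u^{k+1}-u^*\|^2-\tfrac{c_0\beta_G}{2}\|u^k-u^{k+1}\|^2-\tfrac{1}{2\eta}\|\tilde q^k-p^k\|^2\le0$. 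Thus $V_k$ is nonincreasing and nonnegative, hence convergent to some $V_\infty\ge0$, and summing the displayed inequality gives the summability $\sum_k(c_0+k)\|u^{k+1}-u^*\|^2<\infty$. The crux is to show $V_\infty=0$: since $b^k$ is bounded (it lies between $\tfrac{1}{2\eta}$ and $\tfrac{c_0}{2\eta}$) and $p^k\to p^*$ by Theorem~\ref{theo:aconvergence}, the dual part $b^k\|p^*-p^k\|^2\to0$, so $a^k\|u^*-u^k\|^2=V_k-b^k\|p^*-p^k\|^2\to V_\infty$; because $a^k\ge\tfrac{\beta_G}{4}(k+1)^2$ by~\eqref{eq:akbkbound}, the quantity $(k+1)^2\|u^*-u^k\|^2$ converges to a finite limit. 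If that limit were a constant $c>0$ then $\|u^*-u^k\|^2\sim\tfrac{c}{k^2}$ would force $\sum_k k\|u^k-u^*\|^2=\infty$, contradicting the summability just established; hence the limit is $0$, i.e. $(k+1)^2\|u^*-u^k\|^2\to0$, which is precisely $\|u^*-u^{t}\|^2=o(1/t^2)$. This interplay between the strong-convexity-enhanced summability and the convergence of the weighted distance $V_k$ is the main obstacle.

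For parts~(iii) and~(iv) I would feed specific test points into~\eqref{GEB}, exactly as in the proof of Theorem~\ref{thm:ergodic_iteration_complexity}. Setting $u=u^*$ in~\eqref{GEB} and using $\langle\bar p_t,\Theta(u^*)\rangle\le0$ (since $\bar p_t\in\mathbf C^*$, $\Theta(u^*)\in-\mathbf C$) together with the saddle-point lower bound $(G+J)(\bar u_t)-(G+J)(u^*)\ge-\langle p^*,\Theta(\bar u_t)\rangle$, I reduce~\eqref{GEB} to $\langle p-p^*,\Theta(\bar u_t)\rangle\le\frac{2a^0\|u^*-u^0\|^2+2b^0\|p-p^0\|^2}{(t+1)(t+2c_0)}$; taking the normalized dual direction $p=p^*+\Pi(\Theta(\bar u_t))/\|\Pi(\Theta(\bar u_t))\|\in\mathbf C^*$ and invoking the decomposition and orthogonality~\eqref{eq:Projecproperty5}--\eqref{eq:Projecproperty6} to compute $\langle\Pi(\Theta(\bar u_t)),\Theta(\bar u_t)\rangle=\|\Pi(\Theta(\bar u_t))\|^2$ collapses the estimate to $\|\Pi(\Theta(\bar u_t))\|\le O(1/t^2)$, which is~(iii). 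For~(iv), the upper bound follows by choosing $u=u^*$, $p=0$ in~\eqref{GEB} together with $\langle\bar p_t,\Theta(u^*)\rangle\le0$, while the lower bound follows from $(G+J)(\bar u_t)-(G+J)(u^*)\ge-\langle p^*,\Theta(\bar u_t)\rangle\ge-\|p^*\|\,\|\Pi(\Theta(\bar u_t))\|$ (again via~\eqref{eq:Projecproperty5}--\eqref{eq:Projecproperty6}) and the feasibility rate from~(iii); both sides are therefore $O(1/t^2)$.
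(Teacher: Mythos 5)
Your proposal is correct, and for parts (ii)--(iv) it follows essentially the route the paper intends: the paper's own proof of those items simply says ``use Lemma~\ref{lemma:abound1} and the same arguments as in the proof of Theorem~\ref{thm:ergodic_iteration_complexity}'', which is exactly what you carry out (your test point $p=p^*+\Pi(\Theta(\bar u_t))/\|\Pi(\Theta(\bar u_t))\|$ is a harmless variant of the paper's $\hat p=(M_0+1)\Pi(\Theta(\bar u_t))/\|\Pi(\Theta(\bar u_t))\|$). Where you genuinely diverge is part (i). The paper disposes of it in one line: it invokes Theorem~\ref{theo:aconvergence} to assert $\lim_{t\to\infty}\big(a^t\|u^*-u^t\|^2+b^t\|p^*-p^t\|^2\big)=0$ and then divides by $a^t\geq\frac{\beta_G}{4}(t+1)^2$. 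Since Theorem~\ref{theo:aconvergence} as stated only gives $(u^k,p^k)\to(u^*,p^*)$ while $a^t\to\infty$, that assertion is not an immediate consequence of the cited theorem. Your argument supplies the missing step: you use strong convexity to upgrade the bifunction bracket to $L(u^*,\tilde q^k)-L(u^{k+1},p^*)\leq-\frac{\beta_G}{2}\|u^{k+1}-u^*\|^2$, deduce the summability $\sum_k(c_0+k)\|u^{k+1}-u^*\|^2<\infty$, and then rule out a positive limit of the monotone quantity $V_k$ by the incompatibility of $\|u^k-u^*\|^2\sim c/k^2$ with that summability. This is a self-contained and more rigorous derivation of the $o(1/t^2)$ claim; the trade-off is only length, and the price is the (correct) observation that $b^k$ stays bounded so that the dual contribution to $V_k$ vanishes. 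In short: same skeleton as the paper for (ii)--(iv), and a more careful argument than the paper for the delicate little-$o$ statement in (i).
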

\proof
\begin{itemize}
\item[{\rm(i)}] From the convergence Theorem~\ref{theo:aconvergence}, we have that
\begin{eqnarray}\label{eq:convergence_2}
\lim_{t\rightarrow\infty}a^{t}\|u^*-u^{t}\|^2+b^{t}\|p^*-p^{t}\|^2=0.
\end{eqnarray}
Since $a^k$ satisfy~\eqref{eq:akbkbound}, we have that $a^{t}\geq\frac{\beta_G}{4}(t+1)^2$, it follows that
$$\|u^{t}-u^*\|^2=o(1/t^2).$$
\item[{\rm(ii-iv)}] Using Lemma~\ref{lemma:abound1} and the same arguments in the proof of Theorem~\ref{thm:ergodic_iteration_complexity}, we can show that the statements (ii)-(iv) hold.
\end{itemize}
\qed
\section{Linear convergence of VAPP with various error bounds conditions}\label{sec:linear_convergence}
\indent In this section, we study the error bound conditions to ensure the linear convergence of VAPP. \\
\indent The saddle point $(u,p)$ of Lagrangian of problem (P) satisfies the following KKT system:
\begin{equation}\label{eq:KKT}
\left\{
\begin{array}{l}
0\in\nabla G(u)+\partial J(u) +\left(\nabla\Omega(u)+\partial\Phi(u)\right)^{\top}p+\mathcal{N}_{\mathbf{U}}(u)      \\
0\in-\Theta(u)+\mathcal{N}_{\mathbf{C}^*}(p),
\end{array}
\right.
\end{equation}
where $\mathcal{N}_{\mathbf{U}}(u):=\{\xi:\langle\xi,\zeta-u\rangle\leq0, \forall\zeta\in\mathbf{U}\}$ is the normal cone at $u$ to a given convex set $\mathbf{U}$. It is natural to define the Lagrangian based KKT mapping $H:\RR^n\times\RR^m\rightrightarrows\RR^n\times\RR^m$ as:\\
\begin{equation}\label{eq:Hw}
H(w)=\left(
\begin{array}{l}
\nabla G(u)+\partial J(u) +\left(\nabla\Omega(u)+\partial\Phi(u)\right)^{\top}p+\mathcal{N}_{\mathbf{U}}(u)      \\
-\Theta(u)+\mathcal{N}_{\mathbf{C}^*}(p)
\end{array}
\right)
\end{equation}
with $w=\left(\begin{array}{l}u\\p\end{array}\right)$. Thus, KKT system~\eqref{eq:KKT} can be presented as a inclusion problem $0\in H(w)$. For $H(w)$ given in~\eqref{eq:Hw}, its inverse mapping is $H^{-1}(v)=\{w|v\in H(w)\}$. Under Assumption~\ref{assump1}, the set of saddle points $\mathbf{S}^*\neq\emptyset$ and is equal to $H^{-1}(0)$.\\
\indent The primal-dual pair $(u^*,p^*)\in\mathbf{S}^*$ also satisfies the augmented Lagrangian based KKT system:
\begin{equation}\label{eq:KKTgamma}
\left\{
\begin{array}{l}
0\in\nabla G(u)+\partial J(u) +(\nabla\Omega(u)+\partial\Phi(u))^{\top}\Pi(p+\gamma\Theta(u))+\mathcal{N}_{\mathbf{U}}(u)      \\
0\in-\nabla\psi_{\gamma}(p)+\mathcal{N}_{\mathbf{C}^*}(p)=-\Theta(u)+\mathcal{N}_{\mathbf{C}^*}(p)
\end{array}
\right.
\end{equation}
The following mapping is referred to as augmented Lagrangian-based KKT mapping:
\begin{eqnarray*}\label{eq:Hgammaw}
H_{\gamma}(w)=\left(
\begin{array}{l}
\nabla G(u)+\partial J(u) +\left(\nabla\Omega(u)+\partial\Phi(u)\right)^{\top}\Pi(p+\gamma\Theta(u))+\mathcal{N}_{\mathbf{U}}(u)      \\
-\Theta(u)+\mathcal{N}_{\mathbf{C}^*}(p)
\end{array}
\right)
\end{eqnarray*}

We define the generated distance function for a point to set with respect to Bregman function $D(v,u)$ as follows:
$$dist_{D,\epsilon^k}(w,\mathbf{S}^*)=\min_{w^*\in\mathbf{S}^*}[D(u^*,u)+\frac{\epsilon^k}{2\gamma}\|p-p^*\|^2]^{\frac{1}{2}},$$
The classic distance function for a point to set is
$$dist(w,\mathbf{S}^*)=\min_{w^*\in\mathbf{S}^*}[\|u-u^*\|^2+\|p-p^*\|^2]^{\frac{1}{2}}.$$
By Assumption~\ref{assump2} for $D$ and~\eqref{eq:parameter} of $\epsilon^k$, there are $\mathfrak{b}_1$ and $\mathfrak{b_2}$ such that
\begin{eqnarray}\label{dist-bound}
\mathfrak{b}_1dist(w,\mathbf{S}^*)\leq dist_{D,\epsilon^k}(w,\mathbf{S}^*)\leq\mathfrak{b}_2dist(w,\mathbf{S}^*).
\end{eqnarray}
Denote that $\mathbb{B}(x^*;\eta):=\{x:\|x-x^*\|\leq\eta\}$. Now we present the VAPP-iteration-based error bound (V-IEB) which guarantees the linear convergence of VAPP.
\begin{definition}[VAPP-iteration-based error bound (V-IEB)] Let $\{w^k\}$ be the primal-dual sequence generated by the VAPP converges to $w^*\in\mathbf{S}^*$. If there exists $c_1>0$ and $\eta>0$ such that
\begin{equation}\label{eq:VEB}
dist(w^{k+1},\mathbf{S}^*)\leq c_1\|w^k-w^{k+1}\|,\quad\mbox{when}\quad w^{k+1}\in\mathbb{B}(w^*;\eta)
\end{equation}
then $\{w^k\}$ is said to satisfy a VAPP-iteration-based error bound condition.
\end{definition}
With V-IEB, we can prove the linear convergence of VAPP by the following theorem.
\begin{theorem}[V-IEB implies global linear convergence] Suppose Assumption~\ref{assump1} and~\ref{assump2} hold. Let $\{w^k\}$ be the sequence generated by the VAPP converges to $w^*$ which satisfies the V-IEB condition~\eqref{eq:VEB}, then there exists $\beta\in(0,1)$ and $\eta>0$ such that
\begin{equation}
dist_{D,\epsilon^{k+1}}^2(w^{k+1},\mathbf{S}^*)\leq\beta\cdot dist_{D,\epsilon^{k}}^2(w^k,\mathbf{S}^*),\quad\forall k.
\end{equation}
\end{theorem}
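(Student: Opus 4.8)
The plan is to turn the descent inequality of Lemma~\ref{lemma:bound1} into a genuine contraction of the Bregman-type distance $dist_{D,\epsilon^k}(\cdot,\mathbf{S}^*)$, using the V-IEB condition to absorb a full copy of the next-iterate distance into the one-step residual. First I would apply Lemma~\ref{lemma:bound1} at a fixed saddle point $w^*=(u^*,p^*)\in\mathbf{S}^*$, i.e.\ take $u=u^*$ and $p=p^*$. Since $q^k=\Pi(p^k+\rho\Theta(u^k))\in\mathbf{C}^*$ and $u^{k+1}\in\mathbf{U}$, the saddle-point inequalities~\eqref{saddle point:L} give $L(u^*,q^k)\leq L(u^*,p^*)\leq L(u^{k+1},p^*)$, so the Lagrangian term $\epsilon^k[L(u^*,q^k)-L(u^{k+1},p^*)]\leq0$ and drops out. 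Writing $R_k:=\Delta^k(u^k,u^{k+1})+\frac{\epsilon^k}{2\gamma}\|q^k-p^k\|^2\geq0$, Lemma~\ref{lemma:bound1} reduces to
\begin{equation*}
\Big[D(u^*,u^{k+1})+\tfrac{\epsilon^{k+1}}{2\gamma}\|p^*-p^{k+1}\|^2\Big]\leq\Big[D(u^*,u^k)+\tfrac{\epsilon^k}{2\gamma}\|p^*-p^k\|^2\Big]-R_k.
\end{equation*}

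Next I would choose $w^*$ to be the $dist_{D,\epsilon^k}$-projection of $w^k$ onto $\mathbf{S}^*$, so that the bracket on the right equals exactly $dist_{D,\epsilon^k}^2(w^k,\mathbf{S}^*)$, while the bracket on the left is, by definition of the distance as a minimum over $\mathbf{S}^*$, at least $dist_{D,\epsilon^{k+1}}^2(w^{k+1},\mathbf{S}^*)$. Abbreviating $\delta_k:=dist_{D,\epsilon^k}^2(w^k,\mathbf{S}^*)$, this yields the clean recursion $\delta_{k+1}\leq\delta_k-R_k$.

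The crux is to lower-bound the residual $R_k$ by the squared consecutive-iterate gap $\|w^k-w^{k+1}\|^2$ appearing in V-IEB. Using~\eqref{bounddelta} together with $\epsilon^k\leq\overline{\epsilon}<\beta/(B_G+B_\Omega+\gamma\tau^2)$ gives $\Delta^k(u^k,u^{k+1})\geq a\|u^k-u^{k+1}\|^2$ with $a:=\tfrac12(\beta-\overline{\epsilon}(B_G+B_\Omega+\gamma\tau^2))>0$, and $\frac{\epsilon^k}{2\gamma}\|q^k-p^k\|^2\geq\frac{\underline{\epsilon}}{2\gamma}\|q^k-p^k\|^2$. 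On the other hand, nonexpansiveness of $\Pi$~\eqref{eq:Projecproperty4}, the dual update~\eqref{dual}, the identity $q^k=\Pi(p^k+\rho\Theta(u^k))$ and (H$_5$) give $\|p^{k+1}-q^k\|\leq\gamma\tau\|u^{k+1}-u^k\|$, whence $\|p^k-p^{k+1}\|^2\leq2\|p^k-q^k\|^2+2\gamma^2\tau^2\|u^k-u^{k+1}\|^2$ and therefore $\|w^k-w^{k+1}\|^2\leq(1+2\gamma^2\tau^2)\|u^k-u^{k+1}\|^2+2\|p^k-q^k\|^2$. Combining the two estimates produces a constant $c_2>0$ (depending only on $a,\underline{\epsilon},\gamma,\tau$) with $R_k\geq c_2\|w^k-w^{k+1}\|^2$. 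Now the V-IEB bound~\eqref{eq:VEB} gives $\|w^k-w^{k+1}\|^2\geq c_1^{-2}\,dist^2(w^{k+1},\mathbf{S}^*)$, and the equivalence~\eqref{dist-bound} gives $dist^2(w^{k+1},\mathbf{S}^*)\geq\mathfrak{b}_2^{-2}\delta_{k+1}$, so $R_k\geq\kappa\,\delta_{k+1}$ with $\kappa:=c_2/(c_1^2\mathfrak{b}_2^2)>0$. Substituting into $\delta_{k+1}\leq\delta_k-R_k$ gives $(1+\kappa)\delta_{k+1}\leq\delta_k$, i.e.\ the desired contraction with $\beta:=1/(1+\kappa)\in(0,1)$.

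The main obstacle is precisely this passage from the algorithm-intrinsic residual $R_k$ to the quantity $\|w^k-w^{k+1}\|$ on which V-IEB is phrased, since $R_k$ controls only $\|u^k-u^{k+1}\|$ and $\|q^k-p^k\|$ directly and one must pay the $\gamma\tau$ factor to recover the true dual step $\|p^k-p^{k+1}\|$; everything else is bookkeeping with fixed constants. One remaining point is that V-IEB holds only on $\mathbb{B}(w^*;\eta)$: since $\{w^k\}\to w^*$ by Theorem~\ref{theo:general-convergence}, there is $K$ with $w^{k+1}\in\mathbb{B}(w^*;\eta)$ for $k\geq K$, so the contraction holds for all $k\geq K$; for the finitely many $k<K$ the descent $\delta_{k+1}\leq\delta_k-R_k$ with $R_k>0$ whenever $w^k\notin\mathbf{S}^*$ yields ratios $\delta_{k+1}/\delta_k<1$, and enlarging $\beta$ to the maximum of these finitely many ratios and $1/(1+\kappa)$ produces a single $\beta\in(0,1)$ valid for all $k$.
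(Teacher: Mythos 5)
Your proposal is correct and follows essentially the same route as the paper: apply Lemma~\ref{lemma:bound1} at the $dist_{D,\epsilon^k}$-projection of $w^k$ onto $\mathbf{S}^*$, lower-bound the resulting decrease by a multiple of $\|w^k-w^{k+1}\|^2$ via~\eqref{bounddelta} and $\|p^{k+1}-q^k\|\leq\gamma\tau\|u^k-u^{k+1}\|$, then invoke V-IEB and~\eqref{dist-bound} to obtain the contraction factor. The only cosmetic difference is in the globalization step, where you give a direct finite-maximum argument over the initial iterates while the paper cites Proposition 6.1.2 of~\cite{JSPang2007}; both rest on the same observation that the residual is strictly positive off $\mathbf{S}^*$.
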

\begin{proof}
Let $\{w^k\}$ be the sequence generated by VAPP. For given $w^k=(u^k,p^k)$, let $w_k^*=(u_k^*,p_k^*)=\arg\min\limits_{w^*\in\mathbf{S}^*}[D(u^*,u^k)+\frac{\epsilon^k}{2\gamma}\|p^k-p^*\|^2]^{\frac{1}{2}}$ by Lemma~\ref{lemma:bound1} with $u=u_k^*$ and $p=p_k^*$, then it follows that
\begin{eqnarray}\label{eq:linear2}
&&\big{[}D(u_k^*,u^k)+\frac{\epsilon^k}{2\gamma}\|p_k^*-p^{k}\|^2\big{]}-\big{[}D(u_k^*,u^{k+1})+\frac{\epsilon^{k+1}}{2\gamma}\|p_k^*-p^{k+1}\|^2\big{]}\nonumber\\
&\geq&\frac{\beta-\overline{\epsilon}(B_G+B_{\Omega}+\gamma\tau^2)}{2}\|u^k-u^{k+1}\|^2+\frac{\underline{\epsilon}}{2\gamma}\|p^k-q^k\|^2\nonumber\\
&\geq&\alpha (c_1)^2[(1+2\gamma^2\tau^2)\|u^k-u^{k+1}\|^2+2\|p^{k}-q^k\|^2]\nonumber\\
&\geq&\alpha (c_1)^2[\|u^k-u^{k+1}\|^2+2(\|p^{k+1}-q^k\|^2+\|p^{k}-q^k\|^2)]\nonumber\\
&&\qquad\qquad\qquad\qquad\qquad\qquad\;\mbox{(since $\|p^{k+1}-q^k\|\leq\gamma\tau\|u^{k}-u^{k+1}\|$)}\nonumber\\
&\geq&\alpha (c_1)^2[\|u^k-u^{k+1}\|^2+\|p^k-p^{k+1}\|^2]\nonumber\\
&=&\alpha (c_1)^2\|w^k-w^{k+1}\|^2
\end{eqnarray}
where $\alpha=\min\{\frac{\beta-\overline{\epsilon}(B_G+B_{\Omega}+\gamma\tau^2)}{2},\frac{\underline{\epsilon}}{2\gamma}\}/\big{(}(c_1)^2\max\{1+2\gamma^2\tau^2,2\}\big{)}>0$. By the V-IEB condition, there exists $c_1>0$ and $\eta>0$ such that
\begin{equation}\label{VAPP-EB}
dist(w^{k+1},\mathbf{S}^*)\leq c_1\|w^k-w^{k+1}\|,\quad\mbox{when}\quad w^{k+1}\in\mathbb{B}(w^*;\eta)
\end{equation}
Together~\eqref{dist-bound},~\eqref{eq:linear2} and~\eqref{VAPP-EB}, subsequently, we have that
\begin{eqnarray*}
&&\alpha dist_{D,\epsilon^{k+1}}^2(w^{k+1},\mathbf{S}^*)\nonumber\\
&\leq&\alpha(\mathfrak{b}_2)^2dist^2(w^{k+1},\mathbf{S}^*)\quad\mbox{(by~\eqref{dist-bound}.)}\nonumber\\
&\leq&\alpha(\mathfrak{b}_2)^2(c_1)^2\|w^k-w^{k+1}\|^2\quad\mbox{(by~\eqref{VAPP-EB})}\nonumber\\
&\leq&(\mathfrak{b}_2)^2[dist_{D,\epsilon^k}^2(w^k,\mathbf{S}^*)-dist_{D,\epsilon^{k+1}}^2(w^{k+1},\mathbf{S}^*)].\quad\mbox{(by~\eqref{eq:linear2})}
\end{eqnarray*}
It follows the local linear convergence of VAPP
\begin{equation}
dist_{D,\epsilon^{k+1}}^2(w^{k+1},\mathbf{S}^*)\leq\beta'\cdot dist_{D,\epsilon^k}^2(w^k,\mathbf{S}^*),\quad\mbox{when $w^{k+1}\in\mathbb{B}(w^*;\eta)$}
\end{equation}
with $\beta'=(\mathfrak{b}_2)^2/(\alpha+(\mathfrak{b}_2)^2)\in(0,1)$.

By the fact that $\{w^k\}$ converges to $w^*$, it easily follows that for any $\eta>0$, there is $\tilde{\eta}>0$ such that
$$\|w^k-w^{k+1}\|\leq\tilde{\eta}\Rightarrow w^{k+1}\in\mathbb{B}(w^*;\eta).$$
Using the same argument of Proposition 6.1.2 in~\cite{JSPang2007}, we obtain the global linear convergence of VAPP. That is, there is $\beta\in(0,1)$ such that
$$dist_{D,\epsilon^{k+1}}^2(w^{k+1},\mathbf{S}^*)\leq\beta\cdot dist_{D,\epsilon^k}^2(w^k,\mathbf{S}^*)\quad\forall k.\qquad\qquad\qquad\qed$$
\end{proof}
We introduce the following stability notions of set valued mapping which will play a key role to guarantee V-IEB holding.
\begin{definition}\qquad\\
\begin{itemize}
\item[(i)] ({\bf Metric subregularity}) The set-valued mapping $\mathcal{F}(w)$ is called metric subregular around $(w^*,0)$ if $\exists\mathbb{B}(w^*;\eta)$ of $w^*$ and $c_2>0$ such that
\begin{equation}\label{MS}
dist(w,\mathcal{F}^{-1}(0))\leq c_2dist\left(0,\mathcal{F}(w)\right),\quad\forall w\in\mathbb{B}(w^*;\eta)
\end{equation}
\item[(ii)] ({\bf Calmness of $\mathcal{F}^{-1}$, Ye and Ye~\cite{Ye1997}, Rockafellar and Wets~\cite{Rockafellar2009}}) The set-valued mapping $\mathcal{F}^{-1}$ is calmness $(0,w^*)$ if there exists a neighborhood $\mathbb{B}(w^*;\delta)$ of $w^*$ and $\kappa>0$ such that
$$\mathcal{F}^{-1}(v)\cap\mathbb{B}(w^*,\delta)\subset\mathcal{F}^{-1}(0)+\kappa\|v\|\cdot\mathbb{B}(0;1), \forall v\in\mathbb{B}(0;\delta).$$
\item[(iii)] ({\bf Local upper-Lipschitz for $\mathcal{F}^{-1}$, Robinson, 1981~\cite{Robinson1981}}) The set-valued map $\mathcal{F}^{-1}$ is local upper-Lipschitz for $\mathcal{F}^{-1}$ at $0$ if there exists a neighborhood $\mathbb{B}(0;\delta)$ of $0$ and $\kappa>0$ such that
$$\mathcal{F}^{-1}(v)\subset \mathcal{F}^{-1}(0)+\kappa\|v\|\cdot\mathbb{B}(0;1), \forall v\in\mathbb{B}(0;\delta).$$
\item[(iv)] ({\bf Pseudo-Lipschitz (Aubin property) for $\mathcal{F}^{-1}$, Aubin, 1984~\cite{Aubin1984}}) The mapping $\mathcal{F}^{-1}$ is pseudo-Lipschitz continuous around $(0,w^*)$ if there exists neighborhood $\mathbb{B}(0;\delta)$ of $0$ and $\mathbb{B}(w^*;\delta)$ of $w^*$ and $\kappa>0$ such that
    $$\mathcal{F}^{-1}(v)\cap\mathbb{B}(w^*;\delta)\subset\mathcal{F}^{-1}(v')+\kappa\|v-v'\|\cdot\mathbb{B}(0;1), \forall v,v'\in\mathbb{B}(0;\delta).$$
\item[(v)] ({\bf Lipschitz for $\mathcal{F}^{-1}$, Rockafellar, 1976~\cite{Rock76Monotone}}) The mapping $\mathcal{F}^{-1}$ is Lipschitz continuous at $0$ if there exist neighborhood $\mathbb{B}(0;\delta)$ of $0$ and $\kappa>0$ such that
$$\|\mathcal{F}^{-1}(v)-\mathcal{F}^{-1}(0)\|\leq\kappa\|v\|,\quad\forall v\in\mathbb{B}(0;\delta).$$
\end{itemize}
\end{definition}

The relationship among the V-IEB, metric subregularity and other stability of set-valued mapping is shown in Figure~\ref{fig:1}. (also see Ye and Zhou~\cite{Ye18}, Dontchev and Rockafellar~\cite{DontchevRockafellar2009})
\begin{figure}
\centering
\begin{center}
\scriptsize
		\tikzstyle{format}=[rectangle,draw,thin,fill=white]
		\tikzstyle{test}=[diamond,aspect=2,draw,thin]
		\tikzstyle{point}=[coordinate,on grid,]
\begin{tikzpicture}
[
>=latex,
node distance=5mm,
 ract/.style={draw=blue!50, fill=blue!5,rectangle,minimum size=6mm, very thick, font=\itshape, align=center},
 racc/.style={rectangle, align=center},
 ractm/.style={draw=red!100, fill=red!5,rectangle,minimum size=6mm, very thick, font=\itshape, align=center},
 cirl/.style={draw, fill=yellow!20,circle,   minimum size=6mm, very thick, font=\itshape, align=center},
 raco/.style={draw=green!500, fill=green!5,rectangle,rounded corners=2mm,  minimum size=6mm, very thick, font=\itshape, align=center},
 hv path/.style={to path={-| (\tikztotarget)}},
 vh path/.style={to path={|- (\tikztotarget)}},
 skip loop/.style={to path={-- ++(0,#1) -| (\tikztotarget)}},
 vskip loop/.style={to path={-- ++(#1,0) |- (\tikztotarget)}}]
        \node (a) [ractm]{\baselineskip=3pt\small {\bf V-IEB}\\ \baselineskip=3pt\footnotesize$dist\left(w,\mathbf{S}^*\right)\leq c_1\|w^k-w^{k+1}\|$};
        \node (aa) [raco, below = of a]{\baselineskip=3pt\small {\bf linear convergence of VAPP}};
        \node (aa1) [ract, left = of a, xshift=5]{\baselineskip=3pt\small {\bf $H$ is metric}\\
                                        \baselineskip=3pt\small {\bf subregular}};
        \node (aa2) [ract, right = of a, xshift=-5]{\baselineskip=3pt\small {\bf $H_{\gamma}$ is metric}\\
                                        \baselineskip=3pt\small {\bf subregular}};
        \node (b) [ract, above = of a]{\baselineskip=3pt\small {\bf $\mathcal{F}$ is metric subregular}\\
                                       \baselineskip=3pt\small {\bf $dist(w,\mathcal{F}^{-1}(0))\leq c_2dist\left(0,\mathcal{F}(w)\right)$,}\\
                                       \baselineskip=3pt\small {\bf $\forall w\in\mathbb{B}(w^*;\eta)$}};
        \node (bb) [ract, above = of b]{\baselineskip=3pt\small {\bf Calmness for $\mathcal{F}^{-1}$}\\
                                        \baselineskip=3pt\small {\bf $\mathcal{F}^{-1}(v)\cap\mathbb{B}(w^*;\delta)\subset \mathcal{F}^{-1}(0)+\kappa\|v\|\mathbb{B}(0;1),$}\\
                                        \baselineskip=3pt\small {\bf $\forall v\in\mathbb{B}(0;\delta)$}\\
                                        \baselineskip=3pt\small {(see Ye and Ye, 1997~\cite{Ye1997}; Rockafellar and Wets, 1998~\cite{Rockafellar2009})}};
        \node (c) [ract, right = of bb, xshift=15, yshift=23]{\baselineskip=3pt\small {\bf pseudo-Lipschitz for $\mathcal{F}^{-1}$}\\
                                       \baselineskip=3pt\small {\bf $\mathcal{F}^{-1}(v)\cap\mathbb{B}(w^*;\delta)\subset \mathcal{F}^{-1}(v')+\kappa\|v-v'\|\mathbb{B}(0;1),$}\\
                                       \baselineskip=3pt\small {\bf $\forall v,v'\in\mathbb{B}(0;\delta)$.}\\
                                       \baselineskip=3pt\footnotesize {(see Aubin, 1984~\cite{Aubin1984})}};
        \node (d) [ract, right = of bb, xshift=15, yshift=-23]{\baselineskip=3pt\small {\bf local upper-Lipschitz for $\mathcal{F}^{-1}$}\\
                                       \baselineskip=3pt\small {\bf $\mathcal{F}^{-1}(v)\subset \mathcal{F}^{-1}(0)+\kappa\|v\|\mathbb{B}(0;1),$}\\
                                       \baselineskip=3pt\small {\bf $\forall v\in\mathbb{B}(0;\delta)$.}\\
                                       \baselineskip=3pt\footnotesize {(see Robinson, 1981~\cite{Robinson1981})}};
        \node (e) [ract, below = of d]{\baselineskip=3pt\small {\bf $\mathcal{F}^{-1}$ is Lipschitz}\\
                                       \baselineskip=3pt\small {\bf $\|\mathcal{F}^{-1}(v)-\mathcal{F}^{-1}(0)\|\leq\kappa\|v\|$, $v\in\mathbb{B}(0;\delta)$}\\
                                       \baselineskip=3pt\footnotesize {(see Rockafellar, 1976~\cite{Rock76Monotone})}};
        \path 
               (a) edge[->] (aa)
             (aa1) edge[->] (a)
             (aa2) edge[->] (a)
               (c) edge[->] (bb)
               (d) edge[->] (bb)
               (bb) edge[->] (b)
               (b) edge[->] (bb)
               (e) edge[->] (d);
\end{tikzpicture}
\caption{The relationship among the notions of the metric subregularity and other stability of set-valued mapping.}\label{fig:1}
\end{center}
\end{figure}

The following proposition gives a sufficient condition for V-IEB.

\begin{proposition}[Metric subregularity of $H(w)$ or $H_{\gamma}(w)$ implies V-IEB]\label{prop:MSH-VIEB}
Suppose Assumptions~\ref{assump1} and~\ref{assump2} hold. Let $\{w^k\}$ be the sequence generated by the VAPP converges to $w^*$. If one of the following condition holds, then the sequence $\{w^k\}$ satisfies a V-IEB condition.
\begin{itemize}
\item[(i)] $H(w)$ is metric subregular around $(w^*,0)$;
\item[(ii)] $H_{\gamma}(w)$ is metric subregular around $(w^*,0)$;
\end{itemize}
\end{proposition}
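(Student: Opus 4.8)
The plan is to reduce the V-IEB inequality to an \emph{a posteriori} bound on the KKT residual of the current iterate. Since the iterates converge, once $w^{k+1}\in\mathbb{B}(w^*;\eta)$ I may invoke metric subregularity; using $\mathbf{S}^*=H^{-1}(0)=H_\gamma^{-1}(0)$ (because $L$ and $L_\gamma$ share their saddle points), condition~\eqref{MS} gives $dist(w^{k+1},\mathbf{S}^*)\le c_2\,dist\!\left(0,H(w^{k+1})\right)$ in case (i), and likewise with $H_\gamma$ in case (ii). Thus it suffices to exhibit a single element of $H(w^{k+1})$ (resp. $H_\gamma(w^{k+1})$) whose norm is $O(\|w^k-w^{k+1}\|)$; the V-IEB constant will then be $c_1=c_2C$, where $C$ is the constant from the residual bound.

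To build such an element I would read off the first-order optimality condition of the primal subproblem~\eqref{primal}: there exist $\xi_J\in\partial J(u^{k+1})$, a selection $M\in\partial\Phi(u^{k+1})$ and $\nu\in\mathcal{N}_{\mathbf{U}}(u^{k+1})$ with $0=\nabla G(u^k)+\xi_J+\big(\nabla\Omega(u^k)+M\big)^{\!\top}q^k+\tfrac{1}{\epsilon^k}\big(\nabla K(u^{k+1})-\nabla K(u^k)\big)+\nu$. Inserting the \emph{same} multipliers into the first block of $H(w^{k+1})$ and subtracting the displayed identity, the primal residual $r_1$ collapses to the four differences $\nabla G(u^{k+1})-\nabla G(u^k)$, $(\nabla\Omega(u^{k+1}))^{\!\top}p^{k+1}-(\nabla\Omega(u^k))^{\!\top}q^k$, $M^{\!\top}(p^{k+1}-q^k)$ and $-\tfrac{1}{\epsilon^k}(\nabla K(u^{k+1})-\nabla K(u^k))$. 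Each is $O(\|w^k-w^{k+1}\|)$: the first by (H$_2$); the $\nabla\Omega$ part by (H$_4$) after splitting off $(\nabla\Omega(u^k))^{\!\top}(p^{k+1}-q^k)$ and using $\|p^{k+1}-q^k\|\le\gamma\tau\|u^k-u^{k+1}\|$ (nonexpansiveness of $\Pi$ together with (H$_5$) and $\rho=\gamma$); the $M^{\!\top}$ term by the same estimate plus uniform boundedness of $M$ and $\nabla\Omega(u^k)$ along the orbit (the iterates are bounded by Theorem~\ref{theo:general-convergence}); and the last by Assumption~\ref{assump2} with $\epsilon^k\ge\underline{\epsilon}$. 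For the dual block, the projection characterization~\eqref{eq:Projecproperty3} applied to $p^{k+1}=\Pi(p^k+\gamma\Theta(u^{k+1}))$ yields $\tfrac{1}{\gamma}(p^k-p^{k+1})+\Theta(u^{k+1})\in\mathcal{N}_{\mathbf{C}^*}(p^{k+1})$, so $r_2:=\tfrac{1}{\gamma}(p^k-p^{k+1})$ is an admissible dual residual with $\|r_2\|=\tfrac{1}{\gamma}\|p^k-p^{k+1}\|$. Hence $(r_1,r_2)\in H(w^{k+1})$ and $dist(0,H(w^{k+1}))\le C\|w^k-w^{k+1}\|$, which combined with metric subregularity proves (i).

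For part (ii) only the primal block changes: the fixed multiplier $p^{k+1}$ in $r_1$ is replaced by $\Pi(p^{k+1}+\gamma\Theta(u^{k+1}))$, producing one extra difference $\Pi(p^{k+1}+\gamma\Theta(u^{k+1}))-q^k=\Pi(p^{k+1}+\gamma\Theta(u^{k+1}))-\Pi(p^k+\gamma\Theta(u^k))$, which is again $O(\|w^k-w^{k+1}\|)$ by nonexpansiveness of $\Pi$ and (H$_5$); the dual block is identical. The same computation then gives $dist(0,H_\gamma(w^{k+1}))\le C'\|w^k-w^{k+1}\|$, and metric subregularity of $H_\gamma$ closes the argument. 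I expect the only delicate point to be the bookkeeping of the set-valued terms: one must use \emph{the same} selections $\xi_J\in\partial J(u^{k+1})$ and $M\in\partial\Phi(u^{k+1})$ in the subproblem optimality condition and in $H(w^{k+1})$ (legitimate because $(\partial\Phi(u^{k+1}))^{\!\top}q^k$ and $(\partial\Phi(u^{k+1}))^{\!\top}p^{k+1}$ are generated by the same set $\partial\Phi(u^{k+1})$), and to guarantee that $M$ and $\nabla\Omega(u^k)$ remain uniformly bounded along the trajectory, which rests on boundedness of $\{w^k\}$ from Theorem~\ref{theo:general-convergence} and local Lipschitzness of the $\mathbf{C}$-convex mapping $\Phi$.
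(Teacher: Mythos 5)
Your proposal is correct and follows essentially the same route as the paper's proof: read off the optimality conditions of the two VAPP updates, construct from them an explicit element of $H(w^{k+1})$ (resp.\ $H_{\gamma}(w^{k+1})$) whose norm is $O(\|w^k-w^{k+1}\|)$ using (H$_2$), (H$_4$), (H$_5$), Assumption~\ref{assump2}, the bound $\|p^{k+1}-q^k\|\leq\gamma\tau\|u^k-u^{k+1}\|$, and boundedness of the iterates, and then invoke metric subregularity. Your version merely spells out the selection bookkeeping and the term-by-term estimates that the paper compresses into the constants $\mathfrak{a}$, $\mathfrak{b}$.
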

\begin{proof}
\begin{itemize}
\item[(i)] By VAPP scheme, we have
\begin{equation}
\left\{
\begin{array}{l}
0\in\nabla G(u^k)+\partial J(u^{k+1}) +\big{(}\nabla\Omega(u^{k})+\partial\Phi(u^{k+1})\big{)}^{\top} q^k+\frac{1}{\epsilon^k}\left[\nabla K(u^{k+1})-\nabla K(u^{k})\right]+\mathcal{N}_{\mathbf{U}}(u^{k+1})      \\
0\in-\Theta(u^{k+1})+\frac{1}{\gamma}\left[p^{k+1}-p^{k}\right]+\mathcal{N}_{\mathbf{C}^*}(p^{k+1})
\end{array}
\right.
\end{equation}
Thus
\begin{eqnarray*}
v^{k+1}=\left(
\begin{array}{l}
\nabla G(u^{k+1})-\nabla G(u^k) +\big{(}\theta^{k+1}\big{)}^{\top}(p^{k+1}-q^k)\\
\qquad+\big{(}\nabla\Omega(u^{k+1})-\nabla\Omega(u^{k})\big{)}^{\top}q^k+\frac{1}{\epsilon^k}\left[\nabla K(u^{k})-K(u^{k+1})\right]      \\
\frac{1}{\gamma}\left[p^k-p^{k+1}\right]
\end{array}
\right)\in H(w^{k+1})
\end{eqnarray*}
with $\theta^{k+1}\in\partial\Theta(u^{k+1})$. From Assumption~\ref{assump1}, there are positive numbers $\mathfrak{a}$ and $\mathfrak{b}$ such that
\begin{eqnarray}\label{MS-VEB}
\|v^{k+1}\|^2&\leq&\mathfrak{a}\|u^k-u^{k+1}\|^2+\mathfrak{b}\|p^k-p^{k+1}\|^2\nonumber \\
                               &\leq&\max\{\mathfrak{a},\mathfrak{b}\}\|w^k-w^{k+1}\|^2.
\end{eqnarray}
Since $H(w)$ is metric subregular around $(w^*,0)$, then
\begin{eqnarray}\label{MS+}
dist(w^{k+1},\mathbf{S}^*)&\leq& c_2dist(0,H(w^{k+1}))\nonumber\\
&\leq&c_2dist(0,v^{k+1})\nonumber\\
&\leq&c_2\sqrt{\max\{\mathfrak{a},\mathfrak{b}\}}\|w^k-w^{k+1}\|,\quad\forall w^{k+1}\in\mathbb{B}(w^*;\eta).
\end{eqnarray}
which shows $\{w^k\}$ satisfies V-IEB condition.
\item[(ii)] The proof is similar to (i).\qed
\end{itemize}
\end{proof}
Next, we give certain instances with the metric subregularity holding.
\begin{proposition} Consider problem (P), and suppose Assumptions~\ref{assump1} and~\ref{assump2} hold. Let $w^*=(u^*,p^*)$ be the saddle point of (P). The following assertions hold:
\begin{itemize}
\item[{\rm(i)}] $G(u)$ is strongly convex on $\mathbf{U}$, $\mathbf{C}=\{0\}$ or problem (P) only has equality constraints $\Theta(u)=Au-b=0$. Then $H_{\gamma}(w)$ is metric subregular around $(w^*,0)$.
\item[{\rm(ii)}] $\nabla G(u)$ and $\partial J(u)$ are piecewise linear functions, $\mathbf{U}$ is polyhedral, $\Theta(u)=Au-b$, and $\mathbf{C}=\{0\}$. Then $H(w)$ is metric subregular around $(w^*,0)$.
\item[{\rm(iii)}] $G(u)=\frac{1}{2}\langle u,Qu\rangle+\langle c,u\rangle$, $Q\in\RR^{n\times n}$ is symmetric p.s.d matrix, $c\in\RR^n$, $\mathbf{U}$ is polyhedral, $\Theta(u)=Au-b$, and $\mathbf{C}$ is polyhedral convex cone in $\RR^n$. Then $H(w)$ is metric subregular around $(w^*,0)$.
\end{itemize}
\end{proposition}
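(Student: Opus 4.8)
The plan is to treat the three assertions in two groups: the affine/piecewise-linear cases (ii)--(iii), which are combinatorial and rest on Robinson's theory of polyhedral multifunctions, and the strongly convex case (i), which is analytic and rests on the strong monotonicity induced by $\beta_G$. Throughout I use that metric subregularity of $\mathcal F$ at $(w^*,0)$ is equivalent to calmness of $\mathcal F^{-1}$ at $(0,w^*)$ (Figure~\ref{fig:1}), and that calmness is implied by local upper-Lipschitz continuity of $\mathcal F^{-1}$.

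For (ii) and (iii) the strategy is to show that the KKT mapping $H$ is a \emph{polyhedral multifunction}, i.e. $\mathrm{gph}\,H$ is a finite union of polyhedral convex sets, and then to invoke Robinson's theorem~\cite{Robinson1981}: a polyhedral multifunction has locally upper-Lipschitz inverse, hence is metric subregular at \emph{every} point of its graph, in particular at $(w^*,0)$ with $w^*\in\mathbf S^*=H^{-1}(0)$. To verify polyhedrality I would check each block of $H$ in~\eqref{eq:Hw}. Since $\mathbf C=\{0\}$ in (ii) and $\mathbf C$ is a polyhedral cone in (iii), the dual cone $\mathbf C^*$ is polyhedral, so $\mathcal N_{\mathbf C^*}$ has polyhedral graph; likewise $\mathcal N_{\mathbf U}$ has polyhedral graph because $\mathbf U$ is polyhedral; with $\Theta(u)=Au-b$ the coupling terms $A^\top p$ and $-\Theta(u)$ are affine; $\nabla G$ is piecewise linear in (ii) and affine ($\nabla G(u)=Qu+c$) in (iii); and $\partial J$ is piecewise linear (e.g. $J$ polyhedral, as is implicit in the quadratic-programming setting of (iii)). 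Because finite sums and the graph-wise operations that assemble $H$ preserve the class of mappings with polyhedral graph (intersections with diagonals and linear images of polyhedra stay polyhedral), $\mathrm{gph}\,H$ is a finite union of polyhedra, and Robinson's theorem gives the claim with no appeal to strong convexity.

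For (i) the combinatorial route is unavailable, so I argue through monotonicity. With $\mathbf C=\{0\}$ we have $\Pi=\mathrm{Id}$ and $\mathcal N_{\mathbf C^*}=\{0\}$, and with $\Theta(u)=Au-b$ the augmented mapping reduces to $H_\gamma(w)=\big(\nabla G(u)+\partial J(u)+A^\top(p+\gamma(Au-b))+\mathcal N_{\mathbf U}(u),\,-(Au-b)\big)$. Writing $T(u)=\nabla G(u)+\partial J(u)+\mathcal N_{\mathbf U}(u)$, strong convexity of $G$ makes $T$ strongly monotone with modulus $\beta_G$, so $u^*$ is unique and $\mathbf S^*=\{u^*\}\times\mathbf P^*$. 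For any $v\in H_\gamma(w)$ and any saddle point $w^*$, expanding $\langle v,w-w^*\rangle$ cancels the skew coupling terms and makes the augmentation contribute $\gamma\|A(u-u^*)\|^2\ge0$, leaving the primal bound $\beta_G\|u-u^*\|^2\le\langle v,w-w^*\rangle\le\|v\|\,\mathrm{dist}(w,\mathbf S^*)$, while the dual block gives the exact feasibility identity $\|Au-b\|=\|v_2\|$. To close the estimate I must still bound $\mathrm{dist}(p,\mathbf P^*)$: here I would use the constraint qualification (H$_6$), $0\in\mathrm{int}\,\Theta(\mathbf U)$, which forces $A$ to have full row rank, hence $\sigma\|q\|\le\|A^\top q\|$ for some $\sigma>0$, $\mathbf P^*$ a singleton, and $\psi_\gamma$ locally strongly concave. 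Extracting $A^\top(p-p^*)$ from the primal stationarity residual and applying the lower bound yields $\|p-p^*\|\le\sigma^{-1}\big(\beta_G\|u-u^*\|+\|v_1\|+\gamma\|A^\top\|\,\|v_2\|\big)$, and combining the primal, dual, and feasibility pieces gives $\mathrm{dist}(w,\mathbf S^*)\le c_2\,\mathrm{dist}(0,H_\gamma(w))$.

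The main obstacle is exactly this dual recovery in (i): because $\partial J(u)+\mathcal N_{\mathbf U}(u)$ is set-valued and can jump, the stationarity residual controls $\|u-u^*\|$ but not, a priori, the corresponding subgradients, so $\mathrm{dist}(p,\mathbf P^*)$ cannot be bounded by $\|u-u^*\|$ alone. The full row rank of $A$ supplied by (H$_6$) is what removes this degeneracy (equivalently, it is what upgrades the merely monotone dual direction of $H_\gamma$ to a locally strongly concave augmented dual), and making that step quantitative---rather than the otherwise routine monotonicity algebra---is where the real work lies. By contrast (ii)--(iii) are conceptually easy once polyhedrality is recognized; the only care needed there is to confirm that the graph-wise assembly of $H$ does not destroy the polyhedral structure.
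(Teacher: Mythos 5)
Your treatment of (ii) and (iii) is correct and essentially the paper's: the paper invokes Robinson's result on polyhedral multifunctions for (iii) and Zheng--Ng's metric subregularity theorem for piecewise linear multifunctions for (ii), and your unified argument (graph of $H$ is a finite union of polyhedra, hence $H^{-1}$ is locally upper-Lipschitz everywhere with the standard passage from upper-Lipschitz at $0$ to metric subregularity at $(w^*,0)$) is the same mechanism.

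Part (i), however, has a genuine gap exactly at the step you flag and then dismiss. Your claim that the constraint qualification forces $A$ to have full row rank is fine, but full row rank of $A$ does \emph{not} make $\mathbf{P}^*$ a singleton, nor does it yield the bound $\|p-p^*\|\leq\sigma^{-1}\left(\beta_G\|u-u^*\|+\|v_1\|+\gamma\|A^{\top}\|\,\|v_2\|\right)$. The dual optimal set is $\mathbf{P}^*=\{p:-A^{\top}p\in\nabla G(u^*)+\partial J(u^*)+\mathcal{N}_{\mathbf{U}}(u^*)\}$; when $J$ is nonsmooth at $u^*$ or $u^*$ lies on the boundary of $\mathbf{U}$, the right-hand set is a nontrivial convex set and $\mathbf{P}^*$ is a whole polytope even with $A^{\top}$ injective (take $J=\|\cdot\|_1$ and $u^*=0$). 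Correspondingly, when you subtract the stationarity inclusions at $w$ and at $w^*$ to isolate $A^{\top}(p-p^*)$, the residual contains the difference of two subgradients $\xi-\xi^*$ with $\xi\in\partial J(u)+\mathcal{N}_{\mathbf{U}}(u)$, $\xi^*\in\partial J(u^*)+\mathcal{N}_{\mathbf{U}}(u^*)$, and this difference is not controlled by $\|u-u^*\|$ or by $\|v\|$ --- outer semicontinuity of the subdifferential gives no rate. Your displayed inequality silently drops these terms, so the dual recovery does not close. (A secondary issue: your primal estimate $\beta_G\|u-u^*\|^2\leq\|v\|\,\mathrm{dist}(w,\mathbf{S}^*)$ still has the unknown dual distance on the right, so it cannot be decoupled until the dual bound is in hand.) The paper closes this gap differently: it first uses strong monotonicity against $\hat{u}(p)=\arg\min_{u\in\mathbf{U}}L_{\gamma}(u,p)$ (same multiplier $p$ on both sides, so the coupling and subgradient terms pair off monotonically) to get $\|u-\hat{u}(p)\|\leq\beta_G^{-1}\|v_1\|$, and then invokes the dual error bound $\|\hat{u}(p)-u^*\|^2+\|p-p^*\|^2\leq\tau\|\nabla\psi_{\gamma}(p)\|^2$ of Hong and Luo (claim 6.1 of~\cite{HongLuo2017}) for the augmented dual function $\psi_{\gamma}$, together with $\nabla\psi_{\gamma}(p)=A\hat{u}(p)-b$. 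That dual error bound is the nonelementary ingredient your proof is missing; you should either import it (with its hypotheses) or restrict to a setting (e.g.\ polyhedral $J$ and $\mathbf{U}$) where the subdifferential mapping is itself calm.
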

\begin{proof}
\begin{itemize}
\item[(i)] In this case, the augmented Lagrangian function is
$$L_{\gamma}(u,p)=G(u)+J(u)+\langle p,Au-b\rangle+\frac{\gamma}{2}\|Au-b\|^2.$$
The saddle point problem of (P) can be reformulated as the following inclusion problem:
$$0\in H_{\gamma}(w)=\left(\begin{array}{c}\nabla G(u)+\partial J(u)+\gamma A^{\top}(Au-b)+A^{\top}p+\mathcal{N}_{\mathbf{U}}(u)\\-\nabla\psi_{\gamma}(p)\end{array}\right)$$
By a similar argument of claim 6.1 in~\cite{HongLuo2017}, there is $\delta>0$ and $\tau>0$, such that
\begin{equation}\label{eq:dual_EB}
\|\hat{u}(p)-u^*\|^2+\|p-p^*\|^2\leq\tau\|\nabla\psi_{\gamma}(p)\|^2\quad\mbox{when}\quad\|\nabla\psi_{\gamma}(p)\|\leq\delta.
\end{equation}
where $\hat{u}(p)=\arg\min\limits_{u\in\mathbf{U}}L_{\gamma}(u,p)$, and $(u^*,p^*)$ is a saddle point of (P). From~\cite{HongLuo2017}, $\nabla\psi_{\gamma}(p)$ is Lipschitz; thus there is $\eta$ such that~\eqref{eq:dual_EB} holds for $p\in\mathbb{B}(p^*;\eta)$. The strong convexity of $G$ with fact $\hat{u}(p)=\arg\min\limits_{u\in\mathbf{U}}L_{\gamma}(u,p)$ follows that
$$\langle\nabla G(u)+\xi+A^{\top}(Au-b)+A^{\top}p+\nu,u-\hat{u}(p)\rangle\geq\beta_G\|u-\hat{u}(p)\|^2,\quad\forall\xi\in\partial J(u), \forall\nu\in\mathcal{N}_{\mathbf{U}}(u)$$
Thus
\begin{equation}\label{eq:primal-EB}
\|\nabla G(u)+\xi+A^{\top}(Au-b)+A^{\top}p+\nu\|^2\geq\beta_G^2\|u-\hat{u}(p)\|^2,\quad\forall\xi\in\partial J(u), \forall\nu\in\mathcal{N}_{\mathbf{U}}(u).
\end{equation}
Combining~\eqref{eq:dual_EB} and~\eqref{eq:primal-EB}, $\forall p\in\mathbb{B}(p^*;\eta)$, there is $\theta>0$ such that
\begin{eqnarray*}
dist(0,H_{\gamma}(w))&\geq&\theta\sqrt{\left(\|u-u^*\|^2+\|p-p^*\|^2\right)}\\
&\geq&\theta dist(w,H_{\gamma}^{-1}(0)),\quad\mbox{for $w\in\mathbb{B}(w^*;\eta)$.}
\end{eqnarray*}
Therefore $H_{\gamma}(w)$ is metric subregular around $(w^*,0)$.
\item[(ii)] The claim is provided by the error bound result established in Theorem 3.3 of~\cite{Zheng2014}.
\item[(iii)] See Proposition 1 of~\cite{Robinson1981}.
\end{itemize}
\qed
\end{proof}
For the problem with nonlinear constraints, some verifiable sufficient conditions for the error bounds of KKT system mapping are given in~\cite{Ye18} and~\cite{Dontchev}. However, in general, these conditions are not easy to check.
\section{A view of Forward-Backward Splitting for VAPP and the connection with various primal-dual splitting algorithms}\label{sec:FBS-relation}
\subsection{A view of Forward-Backward Splitting (FBS) for VAPP}
In this subsection, we will show that VAPP algorithm can be derived from FBS for inclusion problem of (P). For simplicity, we consider problem (P) with differentiable term $\Phi$ in constraints. Recall the augmented Lagrangian function of (P) is
$$L_{\gamma}(u,p)=G(u)+J(u)+\varphi\left(\Theta(u),p\right).$$
By the definition, the saddle point $(u,p)\in\mathbf{U}\times\mathbf{C}^*$ of $L_{\gamma}$ satisfies
\begin{equation}\label{eq:uprimal}
0\in\partial_uL_{\gamma}(u,p)+\mathcal{N}_{\mathbf{U}}(u)
\end{equation}
and
\begin{equation}\label{eq:pdual}
0\in-\nabla_pL_{\gamma}(u,p).
\end{equation}
Thus, the saddle point problem of (P) can be represented as the following inclusion problem:
\begin{equation}
0\in H_{\gamma}(w)=\left(
\begin{array}{c}
\partial_uL_{\gamma}(u,p)+\mathcal{N}_{\mathbf{U}}(u) \\
-\nabla_pL_{\gamma}(u,p)
\end{array}
\right).
\end{equation}
To find the connection between VAPP algorithm and FBS, we decompose $H_{\gamma}(w)$ as $H_{\gamma}=A+B$, where
\begin{equation}
A(w)=\left(
\begin{array}{c}
\partial J(u)+\mathcal{N}_{\mathbf{U}}(u)\\
\mathbf{0}_m
\end{array}
\right)
\end{equation}
and
\begin{equation}
B(w)=\left(
\begin{array}{c}
\nabla G(u)+\nabla_u\varphi\left(\Theta(u),p\right)\\
-\nabla_p\varphi\left(\Theta(u),p\right)
\end{array}
\right)=\left(\begin{array}{c}\nabla G(u)+\left(\nabla\Omega(u)+\nabla\Phi(u)\right)^{\top}\Pi\left(p+\gamma\Theta(u)\right)\\
-\frac{1}{\gamma}\bigg{[}\Pi\big{(}p+\gamma\Theta(u)\big{)}-p\bigg{]}\end{array}\right).
\end{equation}
For finding the saddle point of (P), we only need to solve the inclusion problem:
\begin{equation}\label{eq:inclusion}
0\in A(w)+B(w)
\end{equation}
Obviously, both $A(w)$ and $B(w)$ are maximal monotone (see Lemma 3.2 in~\cite{Zhu2003}). Given $w^k$, we introduce nonlinear Bregman operator as $\Gamma^k(w)=\left(\begin{array}{c}\frac{1}{\epsilon^k}\nabla K(u)+\left(\nabla\Phi(u)\right)^{\top}q^k\\ \frac{1}{\gamma}\bigg{[}p-\Pi\big{(}p^k+\gamma\Theta(u)\big{)}\bigg{]}\end{array}\right)$ with $q^k=\Pi\left(p^k+\gamma\Theta(u^k)\right)$. Here we briefly prove the strong monotoncity of $\Gamma^k$ on $\mathbf{U}\times\RR^m$. For any $w,w'\in\mathbf{U}\times\RR^m$, we have that
\begin{eqnarray*}
\langle \Gamma^k(w)-\Gamma^k(w'),w-w'\rangle&=&\langle\frac{1}{\epsilon^k}\nabla K(u)+\left(\nabla\Phi(u)\right)^{\top}q^k-\frac{1}{\epsilon^k}\nabla K(u')-\left(\nabla\Phi(u')\right)^{\top}q^k,u-u' \rangle\nonumber\\
&&+\langle\frac{1}{\gamma}\bigg{[}p-\Pi\big{(}p^k+\gamma\Theta(u)\big{)}\bigg{]}-\frac{1}{\gamma}\bigg{[}p'-\Pi\big{(}p^k+\gamma\Theta(u')\big{)}\bigg{]},p-p'\rangle\nonumber\\
&\geq&\frac{\beta}{\overline{\epsilon}}\|u-u'\|^2+\frac{1}{\gamma}\|p-p'\|^2-\tau\|u-u'\|\cdot\|p-p'\|\nonumber\\
&\geq&\frac{\gamma\tau^2}{2}\|u-u'\|^2+\frac{1}{2\gamma}\|p-p'\|^2-\tau\|u-u'\|\cdot\|p-p'\|\nonumber\\
&&+\frac{\beta}{2\overline{\epsilon}}\|u-u'\|^2+\frac{1}{2\gamma}\|p-p'\|^2\qquad\mbox{(by $\overline{\epsilon}\leq\frac{\beta}{\gamma\tau^2}$ in~\eqref{eq:parameter})}\nonumber\\
&\geq&\frac{\beta}{2\overline{\epsilon}}\|u-u'\|^2+\frac{1}{2\gamma}\|p-p'\|^2.
\end{eqnarray*}
Now we propose the iteration based nonlinear forward-backward splitting algorithm to solve~\eqref{eq:inclusion}:
\begin{equation}\label{eq:FBS}
w^{k+1}=(\Gamma^k+ A)^{-1}(\Gamma^k- B)w^{k},
\end{equation}
which consists of first applying a forward (explicit) step and then a backward (implicit) step. By~\eqref{eq:FBS}, it follows that
\begin{equation*}
(\Gamma^k-B)w^{k}\in(\Gamma^k+A)w^{k+1}.
\end{equation*}
Finally, we obtain
\begin{equation*}\label{FBS-VAPP-0}
0\in\left(\begin{array}{l}\frac{1}{\epsilon^k}[\nabla K(u^{k+1})-\nabla K(u^k)]+\nabla G(u^k)+\left(\nabla\Omega(u^k)\right)^{\top}q^k+\partial J(u^{k+1})+\left(\nabla\Phi(u^{k+1})\right)^{\top}q^k+\mathcal{N}_{\mathbf{U}}(u^{k+1})\\
p^{k+1}-\Pi\left(p^k+\gamma\Theta(u^{k+1})\right)\end{array}\right).
\end{equation*}
Therefore,
\begin{eqnarray}
u^{k+1}&=&\arg\min\limits_{u\in\mathbf{U}}\langle\nabla G(u^{k}),u\rangle+J(u)+\langle q^k,\nabla\Omega(u^k)u+\Phi(u)\rangle+\frac{D(u^k,u)}{\epsilon^k},\label{FBS-VAPP-1}\\
p^{k+1}&=&\Pi\left(p^k+\gamma\Theta(u^{k+1})\right),\label{FBS-VAPP-2}
\end{eqnarray}
where $q^k=\Pi\left(p^k+\gamma\Theta(u^k)\right)$. From the strong convexity of $K$, $u^{k+1}$ is unique optimizer of the minimization~\eqref{FBS-VAPP-1}. Notice that, the scheme~\eqref{FBS-VAPP-1}-\eqref{FBS-VAPP-2} exactly coincides with the VAPP algorithm for solving (P).
\subsection{Connections between VAPP and other primal-dual algorithms}
Generally speaking, the majority of existing primal-dual splitting algorithms for convex optimization problems are proposed to solve convex optimization without constraints or just with linear constraints. To discover the connections between VAPP and other primal-dual algorithms, we consider a standard composite optimization problem
\begin{equation}\label{prob:1}
\min_{u} f(Au)+g(u),\quad A\in\RR^{m\times n}
\end{equation}
which can be reformulated as the equality constrained problem
\begin{equation}\label{prob:2}
\begin{array}{cc}
\min\limits_{u,v} &f(v)+g(u)\\
{\rm s.t.} &Au-v=0
\end{array}
\end{equation}
Various primal-dual splitting methods are exploited to sovle problems~\eqref{prob:1}-\eqref{prob:2} by basic splitting scheme. Figure~\ref{fig:2}
and the following statements are used to explain the relationship between VAPP and other primal-dual splitting methods. We focus on connection between VAPP and the primal-dual splitting for constrained convex optimizaiton problem.
\begin{itemize}
\item[(i)] VAPP is a nonlinear FBS algorithm for solving nonlinear convex cone optimization problems.
\item[(ii)] An example of VAPP for problem~\eqref{prob:2} with $G=0$ is Algorithm A$_0$ proposed in~\cite{Zhang2011}, when we choose $K(u)=\frac{1}{2}\left(\|u\|_{Q_0}^2+\alpha\|Au-b\|^2\right)$. Furthermore, if $Q_0=\frac{1}{\sigma}I-\alpha A^{\top}A$, then VAPP coincides with the Bregman operator splitting algorithm (BOS) in~\cite{Zhang2010}.
\item[(iii)] Another related algorithm for problem~\eqref{prob:2} is the predictor corrector proximal multiplier method (PCPM)~\cite{ChenTeboulle1994} was developed by Chen and Teboulle. Note that exact version of PCPM can be finded by VAPP with $G=0$, $J=f(v)+g(u)$ and $K(u,v)=\frac{1}{2}\left(\|u\|^2+\|v\|^2\right)$.
\item[(iv)] Again consider problem (P), its Lagrangian function is $L(u,p)=(G+J)(u)+\langle p,\Theta(u)\rangle$. Taking $T(\cdot)$ as the KKT mapping, then we have $T(w)=\left(\begin{array}{l}\partial_uL(u,p)+\mathcal{N}_{\mathbf{U}}(u)\\-\partial_pL(u,p)+\mathcal{N}_{\mathbf{C}^*}(p)\end{array}\right)$. The alternative projection-proximal method of Tseng (1997)~\cite{Tseng1997} yields the following modified proximal Uzawa algorithm to solve (P).
\begin{equation}\label{Tseng}
\left\{\begin{array}{l}q^k=\Pi\left(p^k+\alpha\Theta(u^k)\right)\\ u^{k+1}=\arg\min\limits_{u\in\mathbf{U}}L(u,q^k)+\frac{\|u-u^k\|^2}{2\alpha}\\ p^{k+1}=\Pi\left(p^k+\alpha\Theta(u^{k+1})\right)\end{array}\right.
\end{equation}
For problem (P), we can take $\tilde{J}(u)=G(u)+J(u)$, $\tilde{\Theta}(u)=\Omega(u)+\Phi(u)$, then VAPP with $K(u)=\frac{\|u\|^2}{2}$ yields the same algorithm~\eqref{Tseng}.
\item[(v)] To the best of our knowledge, the relationship between VAPP/PCPM and DRS, FBFS is not clear. Recently, Combettes~\cite{Combettes2018} applying Tseng's FBFS to Lagrangian of problem~\eqref{prob:2}, established a new algorithm that bears a certain resemblance with the algorithm PCPM~\cite{ChenTeboulle1994}.
\end{itemize}
\begin{figure}
\centering
\begin{center}
\scriptsize
		\tikzstyle{format}=[rectangle,draw,thin,fill=white]
		\tikzstyle{test}=[diamond,aspect=2,draw,thin]
		\tikzstyle{point}=[coordinate,on grid,]
\begin{tikzpicture}
[
>=latex,
node distance=5mm,
 ract/.style={draw=blue!50, fill=blue!5,rectangle,minimum size=6mm, very thick, font=\itshape, align=left},
 racc/.style={rectangle, align=center},
 ractm/.style={draw=red!100, fill=red!5,rectangle,minimum size=6mm, very thick, font=\itshape, align=center},
 cirl/.style={draw, fill=yellow!20,circle,   minimum size=6mm, very thick, font=\itshape, align=center},
 raco0/.style={draw=green!500, fill=green!5,rectangle,rounded corners=2mm,  minimum size=6mm, very thick, font=\itshape, align=center},
 raco1/.style={draw=yellow!500, fill=yellow!5,rectangle,rounded corners=2mm,  minimum size=6mm, very thick, font=\itshape, align=center},
 raco2/.style={draw=red!500, fill=red!5,rectangle,rounded corners=2mm,  minimum size=6mm, very thick, font=\itshape, align=center},
 hv path/.style={to path={-| (\tikztotarget)}},
 vh path/.style={to path={|- (\tikztotarget)}},
 skip loop/.style={to path={-- ++(0,#1) -| (\tikztotarget)}},
 vskip loop/.style={to path={-- ++(#1,0) |- (\tikztotarget)}}]
        \node (a) [ractm]{\baselineskip=3pt\footnotesize $\begin{array}{cc}\min\limits_{u,v}&f(v)+g(u)\\{\rm s.t.}&Au-v=0\end{array}$};
        \node (aaa1) [ract, below = of a, xshift=15, yshift=-30]{\baselineskip=3pt\footnotesize A$_0$~\cite{Zhang2011}};
        \node (aaa2) [ract, below = of aaa1, xshift=-3, yshift=5]{\baselineskip=3pt\footnotesize BOS~\cite{Zhang2010}};
        \node (aaa3) [ract, right = of aaa1, xshift=-10,yshift=-10]{\baselineskip=3pt\footnotesize PCPM~\cite{ChenTeboulle1994}};
        \node (aa2) [ract, above = of a, xshift=-25]{\baselineskip=3pt\footnotesize {\rm(P$_0$)}\quad$\min\limits_{u}f(Au)+g(u)$\\
                                                     \baselineskip=3pt\footnotesize {\rm(D$_0$)}\quad$\min\limits_{p}f^*(p)+g^*(-A^{\top}p)$};
        \node (aacc) [raco2, right = of aa2]{\baselineskip=3pt\footnotesize FBS};
        \node (aacc1) [ract, right = of aacc]{\baselineskip=3pt\footnotesize proximal splitting~\cite{Esser2010}};
        \node (b) [ract, above = of a, xshift=-26, yshift=50]{\baselineskip=3pt\footnotesize {\rm(SP$_0$)}\quad$\min\limits_{u}\max\limits_{p}\langle p,Au\rangle-f^*(p)+g(u)$};
        \node (bcc) [raco2, right = of b]{\baselineskip=3pt\footnotesize FBS};
        \node (bccc2) [racc, above = of bcc, yshift=-18]{\baselineskip=3pt\footnotesize by Esser, 2010~\cite{Esser2010}};
        \node (bcc1) [ract, right = of bcc]{\baselineskip=3pt\footnotesize PDHG~\cite{ZhuChan}};
        \node (c) [ractm, left = of a]{\baselineskip=3pt\footnotesize $\begin{array}{cc}\min\limits_{u\in\mathbf{U}}&G(u)+J(u)\\{\rm s.t.}&\Omega(u)+\Phi(u)\in-\mathbf{C}\end{array}$};
        \node (cc) [raco2, below = of c, xshift=42]{\baselineskip=3pt\footnotesize FBS};
        \node (ccl) [racc, above= of cc, yshift=-15]{\baselineskip=3pt\footnotesize $L_{\gamma}$};
        \node (cc1) [ractm, below = of cc]{\baselineskip=3pt\footnotesize VAPP~\cite{CohenZ}\\
                                           \baselineskip=3pt\footnotesize and \\
                                           \baselineskip=3pt\footnotesize this paper};
        \node (ccl) [ract, left = of cc1, xshift=-15]{\baselineskip=3pt\footnotesize modified\\
                                          \baselineskip=3pt\footnotesize proximal\\
                                          \baselineskip=3pt\footnotesize Uzawa algorithm};
        \node (ccl1) [racc, above = of ccl, xshift=-20]{\baselineskip=3pt\footnotesize by alternative projection-proximal\\
         \baselineskip=3pt\footnotesize method of Tseng, 1997~\cite{Tseng1997};};
        \node (ac1) [raco1, right = of a, yshift=10]{\baselineskip=3pt\footnotesize DRS};
        \node (accc1) [racc, above = of ac1, yshift=-18, xshift=60]{\baselineskip=3pt\footnotesize by Eckstein, 1994~\cite{Eckstein}; O'Connor and Vandenberghe~\cite{ConnorVandenberghe2014,ConnorVandenberghe2017}};
        \node (acl1) [racc, above= of ac1, xshift=-18, yshift=-25]{\baselineskip=3pt\footnotesize $L$};
        \node (acc1) [ract, right = of ac1]{\baselineskip=3pt\footnotesize ADMM~\cite{ADMM1983}};
        \node (accc1) [ract, right = of acc1,xshift=-10]{\baselineskip=3pt\footnotesize PDA~\cite{ChambollePock16}};
        \node (accc11) [ract, right = of accc1,xshift=-10]{\baselineskip=3pt\footnotesize PDHG~\cite{ZhuChan}};
        \node (ac2) [raco0, right = of a, yshift=-10]{\baselineskip=3pt\footnotesize FBFS};
        \node (accc2) [racc, below = of ac2, yshift=18]{\baselineskip=3pt\footnotesize by Tseng, 2000~\cite{Tseng2000}};
        \node (acl2) [racc, above= of ac2, xshift=-18, yshift=-25]{\baselineskip=3pt\footnotesize $L$};
        \node (acc2) [ract, right = of ac2]{\baselineskip=3pt\footnotesize variant of Tseng's algorithm~\cite{Tseng1997}};
        \path 
              (aacc) edge[->] (aacc1)
              (cc) edge[->] (cc1)
              (ac1) edge[->] (acc1)
              (ac2) edge[->] (acc2)
              (bcc) edge[->] (bcc1)
              (b) edge[-] (bcc)
              (aaa1) edge[->] (0.55,-3.05)
              (a) edge[-] (0,1.05)
              (0,2.2) edge[-] (0,2.8)
              (aa2) edge[-] (aacc)
              (1.25,0.35) edge[-] (ac1)
              (1.25,-0.35) edge[-] (ac2)
              (cc1) edge[->] (ccl)
              (-4.5,-0.5) edge[->] (-4.5,-2.2)
              (cc1) edge[->] (-0.35,-2.65);
        \path (c) edge[-, vh path] (cc)
              (a) edge[-, vh path] (cc);
        \draw[dotted,very thick] (-0.35,-1.9)--(3.1,-1.9);
        \draw[dotted,very thick] (-0.35,-1.9)--(-0.35,-3.8);
        \draw[dotted,very thick] (3.1,-1.9)--(3.1,-3.8);
        \draw[dotted,very thick] (-0.35,-3.8)--(3.1,-3.8);
\end{tikzpicture}
\caption{The connection between VAPP and other primal-dual splitting algorithm.}\label{fig:2}
\end{center}
\end{figure}
\section{Further study to some issues for VAPP scheme and implementation}\label{sec:implimentation}
\subsection{The variant of VAPP under new assumption (H$_4'$) of gradient Lipschitz of function $f_p(u)$}
In Section~\ref{VAPP-a}, we show that Assumption (H$_4$) of gradient Lipschitz of $f_p(u)$ uniformly in $p$ plays an important role for convergence analysis for VAPP (in both convex and strongly convex cases). Observe that if the term $\Omega(u)$ is absent from the constraints of (P) or only linear constraints appear, then (H$_4$) obviously holds and take $B_{\Omega}=0$. For another cases, it's not easy to check if (H$_4$) holds. Now we introduce another assumption (H$_4'$) for $f_p(u)$ as\\
\\
{\bf Assumption} {\rm(H$_4'$)} $\Omega$ is differentiable. For any given $p\in\RR^m$, assume that the derivative of function $f_p(u)=\langle p,\Omega(u)\rangle$ is Lipschitz on $\mathbf{U}$ with constant $\tilde{B}_{\Omega}\|p\|$, such that
$$\forall u,v\in \mathbf{U}, \|\nabla f_p(u)-\nabla f_p(v)\|\leq\tilde{B}_{\Omega}\|p\|\cdot\|u-v\|.$$
Next lemma shows that (H$_4'$) holds under the mild condition.
\begin{lemma}
Suppose $\Omega(u)=\left(\Omega_1(u),\ldots,\Omega_m(u)\right)^{\top}$, function $\Omega_j:\RR^n\rightarrow\RR$, $j\in\langle 1,m\rangle$ has Lipschitz gradient with constant $B_{\Omega_j}$. Then $\forall u,v\in \mathbf{U}, \forall p\in\RR^m$ we have
\begin{equation}\label{UB-2}
\|\nabla f_p(u)-\nabla f_p(v)\|\leq\|p\|\cdot B_{\Omega}\|u-v\|\quad\mbox{with}\quad\tilde{B}_{\Omega}=\sum\limits_{j=1}^{m}B_{\Omega_j}.
\end{equation}
\end{lemma}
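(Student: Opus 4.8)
The plan is to prove the claim by a direct computation, since $f_p$ is linear in $p$ and each coordinate $\Omega_j$ is assumed to have a Lipschitz gradient. First I would write the gradient of $f_p(u)=\langle p,\Omega(u)\rangle=\sum_{j=1}^{m}p_j\Omega_j(u)$ explicitly as
\begin{equation*}
\nabla f_p(u)=\sum_{j=1}^{m}p_j\nabla\Omega_j(u),
\end{equation*}
which is valid because differentiation is linear and each $\Omega_j$ is differentiable with Lipschitz gradient (hence in particular differentiable).

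Next I would form the difference $\nabla f_p(u)-\nabla f_p(v)=\sum_{j=1}^{m}p_j\bigl(\nabla\Omega_j(u)-\nabla\Omega_j(v)\bigr)$ and apply the triangle inequality followed by the individual Lipschitz bounds $\|\nabla\Omega_j(u)-\nabla\Omega_j(v)\|\le B_{\Omega_j}\|u-v\|$, giving
\begin{equation*}
\|\nabla f_p(u)-\nabla f_p(v)\|\le\sum_{j=1}^{m}|p_j|\,\|\nabla\Omega_j(u)-\nabla\Omega_j(v)\|\le\Bigl(\sum_{j=1}^{m}|p_j|B_{\Omega_j}\Bigr)\|u-v\|.
\end{equation*}
Finally I would use the elementary bound $|p_j|\le\|p\|$ (valid coordinatewise for the Euclidean norm, since $\|p\|=\sqrt{\sum_j p_j^2}\ge|p_j|$) to factor out $\|p\|$ and collect the constants, obtaining $\sum_{j=1}^{m}|p_j|B_{\Omega_j}\le\|p\|\sum_{j=1}^{m}B_{\Omega_j}=\|p\|\,\tilde{B}_{\Omega}$, which is exactly the assertion~\eqref{UB-2}.

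Honestly, there is no serious obstacle here: the result is a one-line consequence of linearity of the gradient in $p$, the triangle inequality, and the coordinate estimate $|p_j|\le\|p\|$. The only point requiring a little care is the final step, where one must choose the coordinatewise bound $|p_j|\le\|p\|$ consistently with the Euclidean norm used throughout the paper; a slightly sharper constant (namely $\sqrt{\sum_j B_{\Omega_j}^2}$) would follow from Cauchy--Schwarz, but the stated bound via the triangle inequality is the one that yields the clean sum $\tilde{B}_{\Omega}=\sum_{j=1}^{m}B_{\Omega_j}$, and it suffices to establish that Assumption (H$_4'$) holds.
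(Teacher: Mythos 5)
Your argument is correct and is essentially identical to the paper's own proof: both expand $\nabla f_p(u)-\nabla f_p(v)$ as $\sum_{j=1}^{m}p_j\bigl(\nabla\Omega_j(u)-\nabla\Omega_j(v)\bigr)$, apply the triangle inequality and the coordinatewise Lipschitz bounds, and then use $|p_j|\leq\|p\|$ to extract the constant $\tilde{B}_{\Omega}=\sum_{j=1}^{m}B_{\Omega_j}$. Your remark that Cauchy--Schwarz would give the sharper constant $\bigl(\sum_j B_{\Omega_j}^2\bigr)^{1/2}$ is a nice observation, but the paper settles for the same looser bound you do.
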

\begin{proof}
For given $p\in\RR^m$, we have that $f_p(u)=\langle p,\Omega(u)\rangle$ and $\nabla f_p(u)=\left(\nabla\Omega(u)\right)^{\top}p$. It follows that
\begin{eqnarray*}
\|\nabla f_p(u)-\nabla f_p(v)\|&=&\|(\nabla\Omega(u)-\nabla\Omega(v))^{\top}p\|\\
&\leq&|p_1|\cdot\|\nabla\Omega_1(u)-\nabla\Omega_1(v)\|+\cdots+|p_m|\cdot\|\nabla\Omega_m(u)-\nabla\Omega_m(v)\|\\
&\leq&\|p\|\cdot\sum_{j=1}^{m}B_{\Omega_j}\|u-v\|=\|p\|\cdot\tilde{B}_{\Omega}\|u-v\|.\qquad\qquad\qed
\end{eqnarray*}
\end{proof}
It is easy to show that assumption (H$_4'$) implies (H$_4$) with $B_{\Omega}=M\tilde{B}_{\Omega}$ whenever $\|p\|\leq M$. This fact encourage us to propose the following modified VAPP schemes.
\begin{itemize}
\item[{\rm(i)}] For convex problem (P):
\begin{eqnarray*}
\begin{array}{l}
\mbox{\bf VAPP-M Algorithm:}\\
\left\{\begin{array}{l}
u^{k+1}\leftarrow\min\limits_{u\in \mathbf{U}}\langle\nabla G(u^{k}), u\rangle + J(u)+ \langle q^k, \nabla\Omega(u^k)u+\Phi(u)\rangle+\frac{1}{\epsilon^k}D(u,u^k);\\
p^{k+1}\leftarrow \Pi_M\big{(}p^{k}+\rho\Theta(u^{k+1})\big{)}
\end{array}
\right.
\end{array}
\end{eqnarray*}
with $q^k=\Pi_M\big{(}p^{k}+\rho\Theta(u^{k})\big{)}$.
\item[{\rm(ii)}] For strongly convex problem (P)
\begin{eqnarray*}
\begin{array}{l}
\mbox{\bf VAPP-SM Algorithm:}\\
\left\{\begin{array}{l}
u^{k+1}\leftarrow\min\limits_{u\in \mathbf{U}}\langle\nabla G(u^{k}), u\rangle + J(u)+ \langle\tilde{q}^k, \nabla\Omega(u^k)u+\Phi(u)\rangle+\frac{1}{2\epsilon^k}\|u-u^k\|^2;\\
p^{k+1}\leftarrow \Pi_M\big{(}p^{k}+\rho^k\Theta(u^{k+1})\big{)}
\end{array}
\right.
\end{array}
\end{eqnarray*}
with $\tilde{q}^k=\Pi_M\big{(}p^{k}+\rho^k\Theta(u^{k})\big{)}$.
\end{itemize}
Let $M_0$ be a bound of dual optimal solution of (P), denote $M = M_0+1$. Let $\mathfrak{B}_M=\{p|\|p\|\leq M\}$. The estimation of $M_0$ can be found in subsection~\ref{implementation}. By using the projection $\Pi_M(\cdot)$ onto $\mathbf{C}^*\cap\mathfrak{B}_M$. Using the similar arguments in Section~\ref{VAPP-a}, we can also establish the convergence and convergence rate results for VAPP-M and VAPP-SM under the new assumption (H$_4'$). All the assertions of Lemma~\ref{lemma:bound1}, Theorems~\ref{theo:general-convergence},~\ref{thm:ergodic_iteration_complexity},~\ref{theo_2}, and Lemma~\ref{lemma:abound1}, Theorems~\ref{theo:aconvergence},~\ref{theo:aconvergence_rate} are still valid both to VAPP-M and VAPP-SM. Here we omit the details of proof.
\subsection{Issues in the implementation of VAPP for NCCP}\label{implementation}
In this section, we provide three issues in the implementation of VAPP for NCCP: backtracking technique, $\mathbf{C}$-convexity of structured mapping and estimation of the bound for dual optimal solution.
\subsubsection{VAPP with backtracking}\label{VAPPB}
To guarantee the convergence and convergence rate of VAPP, we require that the parameters satisfy the convergence condition~\eqref{eq:parameter} for (P). However, the Lipschitz constant $B_G$, $\tau$ and $B_{\Omega}$ are not always known or computable, thus we must conservatively choose $\{\epsilon^k\}$. This difficulty is stated by industry for implementation of VAPP~\cite{APPturning1,APPturning2}. Recall that the quantity $\Delta^k(u^{k},u^{k+1})$ and the non-increasing $\epsilon^k$ play key role in the convergence and convergence rate analysis. $\Delta^k(u^k,u^{k+1})$ must satisfy the following inequality:\\
$\Delta^k(u^k,u^{k+1})\geq\frac{\beta-\epsilon^k(B_G+B_{\Omega}+\gamma\tau^2)}{2}\|u^k-u^{k+1}\|^2$.\\
This fact furnishes that if $\Delta^k(u^k,u^{k+1})<0$, the satisfication constraint $\epsilon^k<\frac{\beta}{B_G+B_{\Omega}+\gamma\tau^2}$ falls. Based on this fact, we establish the backtracking strategy as follows:\\
\noindent\rule[0.25\baselineskip]{\textwidth}{1.5pt}
{\bf VAPP with Backtracking}\\
\noindent\rule[0.25\baselineskip]{\textwidth}{0.5pt}
{\bf Step 0.} Take $\epsilon^0>0$, $\gamma>0$, $0<\eta<1$, $u^0 \in \mathbf{U}$ and $p^0\in \mathbf{C^*}$. \\
{\bf Step k.} ($k\geq1$) Find the smallest nonnegative integers $i_k$ such that
\begin{equation}
\Delta^k(u^{k-1},\tilde{u})\geq0,
\end{equation}
with $\tilde{\epsilon}=\eta^{i_k}\epsilon^{k-1}$\\
and $\tilde{u}=\arg\min\limits_{u\in U}\langle\nabla G(u^{k-1}), u \rangle + J(u) + \langle q^{k-1}, \nabla\Omega(u^{k-1})u+\Phi(u)\rangle +\frac{1}{\tilde{\epsilon}}D(u,u^{k-1})$.\\
Set $\epsilon^k=\tilde{\epsilon}$ and $u^{k}=\tilde{u}$.\\
Compute $p^{k}=\Pi\big{(}p^{k-1}+\gamma\Theta(u^{k})\big{)}$.\\
\noindent\rule[0.25\baselineskip]{\textwidth}{1.5pt}
The process of VAPP with backtracking guarantees $\Delta^k(u^k,u^{k+1})$ is non-negative, the parameter $\{\epsilon^k\}$ is non-increasing and $\epsilon^k\geq\frac{\eta\beta}{B_{G}+B_{\Omega}+\gamma\tau^2}$. Moreover, after a finite number of iterations, $\epsilon^k$ remains constant. Therefore, all the convergence and convergence rate analysis are still valid. The backtracking strategy also can be used for VAPP-M. (noted that we must take $\Pi_M(\cdot)$ to compute $q^{k-1}$ and $p^k$)
\subsubsection{$\mathbf{C}$-convexity of structured mapping}\label{c-convex}
\indent First note that the affine mapping $\Theta(u)=Au-b$ is $\mathbf{C}$-convex for any convex cone $\mathbf{C}$. When $\mathbf{C}=\RR_{+}^m$, $\Theta(u)$ is $\mathbf{C}$-convex if its elements are convex. Although in~\cite{Boyd}, Boyd and Vandenberghe presented some conditions for $\mathbf{C}$-convexity of a mapping (or convexity with respect to general inequalities), it is generally difficult to verify the $\mathbf{C}$-convexity of mapping $\Theta(u)$ directly.
The following lemma gives the $\mathbf{C}$-convexity of some structured mapping. Their $\mathbf{C}$-convexity allows us to cover some popular applications.
\begin{lemma}\label{lemma:C_convex}
Let $g_0(u)$ be convex on $\RR^n$ and $g(u)$ be a vector function, $g(u)=\big{(}g_1(u),...,g_l(u)\big{)}^{\top}$ whose components $g_j(u)$ are convex on $\RR^n$. Let $Q=[Q_{ij}]_{m\times l}$ be a nonegative matrix and $\omega=(\omega_1,...,\omega_l)^{\top}\in\RR^{l}$ be a nonegative vector with $\omega_j\geq\sum\limits_{i=1}^{m}Q_{ij}$, $j=1,...,l$. Let $A$ be $m'\times n$ matrix and $b\in\RR^{m'}$. Consider $\nu$-norm cone $\mathcal{K}_{\nu}^{k}=\{x=(x_0,\overline{x})\in \RR\times\RR^{k-1}|x_0\geq\|\overline{x}\|_{\nu}\}\subset\RR^k(\nu\geq1)$. Then the following statements hold:
\begin{itemize}
\item[{\rm(i)}] $\Theta(u)=\left(\begin{array}{l}\omega^{\top}g(u)+g_0(u)\\Qg(u)\end{array}\right)$ is $\mathcal{K}_{\nu}^{m+1}$-convex on $\RR^n$;
\item[{\rm(ii)}] $\Theta(u)=\left(\begin{array}{l}g_0(u)\\Au-b\end{array}\right)$ is $\mathcal{K}_{\nu}^{m'+1}$-convex on $\RR^n$;
\item[{\rm(iii)}] $\Theta(u)=\left(\begin{array}{l}\omega^{\top}g(u)+g_0(u)\\Qg(u)\\Au-b\end{array}\right)$ is $\mathcal{K}_{\nu}^{m+m'+1}$-convex on $\RR^n$.
\end{itemize}
\end{lemma}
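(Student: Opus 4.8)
The plan is to verify each claim directly from the definition of $\mathbf{C}$-convexity in~\eqref{Theta_C_convex}. Fixing $u,v\in\RR^n$ and $\alpha\in[0,1]$ and writing $w=\alpha u+(1-\alpha)v$, I must show that the residual $R:=\alpha\Theta(u)+(1-\alpha)\Theta(v)-\Theta(w)$ lies in the relevant norm cone, i.e.\ that its leading coordinate dominates the $\nu$-norm of its remaining coordinates. The organizing quantities will be the convexity gaps $\delta_j:=\alpha g_j(u)+(1-\alpha)g_j(v)-g_j(w)$ for $j=0,1,\dots,l$; these are nonnegative precisely because each $g_j$ and $g_0$ is convex, and everything reduces to manipulating them.

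For part~(i), I will first compute that the leading coordinate of $R$ equals $y_0=\omega^\top\delta+\delta_0$ (writing $\delta=(\delta_1,\dots,\delta_l)^\top$), while the remaining block equals $\overline{y}=Q\delta$. Since $Q$ is nonnegative and $\delta\geq0$, every entry of $\overline{y}$ is nonnegative, which lets me drop absolute values and run the chain
\begin{equation*}
\|\overline{y}\|_\nu\leq\|\overline{y}\|_1=\sum_{i=1}^m\sum_{j=1}^l Q_{ij}\delta_j=\sum_{j=1}^l\delta_j\sum_{i=1}^m Q_{ij}\leq\sum_{j=1}^l\omega_j\delta_j=\omega^\top\delta\leq y_0.
\end{equation*}
Here the first inequality is the standard norm monotonicity $\|\cdot\|_\nu\leq\|\cdot\|_1$ for $\nu\geq1$, the weight hypothesis $\omega_j\geq\sum_{i=1}^m Q_{ij}$ combined with $\delta_j\geq0$ gives the middle inequality, and $\delta_0\geq0$ gives the last. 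This places $R$ in $\mathcal{K}_\nu^{m+1}$.

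For part~(ii), I expect the affine block to contribute nothing: its residual is $A(\alpha u+(1-\alpha)v-w)-(\alpha+(1-\alpha)-1)b=0$, so $\overline{y}=0$ while the leading coordinate is $\delta_0\geq0$, giving $R\in\mathcal{K}_\nu^{m'+1}$ immediately. Part~(iii) then combines the two: the affine coordinates again cancel to zero, and appending these zeros to $Q\delta$ does not change the $\nu$-norm, so the estimate from~(i) applies verbatim to yield $R\in\mathcal{K}_\nu^{m+m'+1}$.

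I do not anticipate a real obstacle; the whole argument is bookkeeping once $R$ is decomposed into the gaps $\delta_j$. The single point requiring care is ensuring the entries of $Q\delta$ are nonnegative, so that $\|Q\delta\|_1=\sum_{i,j}Q_{ij}\delta_j$ without absolute values---this is exactly where nonnegativity of $Q$ and of the $\delta_j$ is needed, and all three hypotheses (convexity, $Q\geq0$, and $\omega_j\geq\sum_i Q_{ij}$) enter together in the displayed chain.
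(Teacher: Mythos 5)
Your proof is correct and follows essentially the same route as the paper: both arguments reduce to the convexity gaps of the $g_j$, bound the $\nu$-norm of the residual's tail by its $1$-norm, and then use the nonnegativity of $Q$ together with the hypothesis $\omega_j\geq\sum_i Q_{ij}$ to dominate it by the leading coordinate (the paper works with the nonpositive gaps $\tilde g_j=-\delta_j$, a pure sign convention). The only cosmetic difference is that you spell out the vanishing of the affine residual in (ii) and (iii), which the paper leaves as a one-line remark.
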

\begin{proof}
(i) For the sake of brevity, $\forall u,v\in\mathbf{R}^n$, $\alpha\in[0,1]$, denote $\tilde{g}(u,v)=g\big{(}\alpha u+(1-\alpha)v\big{)}-\alpha g(u)-(1-\alpha)g(v)$ and $\tilde{g}_j(u,v)=g_j\big{(}\alpha u+(1-\alpha)v\big{)}-\alpha g_j(u)-(1-\alpha)g_j(v)$, $j=0,1,...,l$.\\
Since $g_j(\cdot)$, $j=0,1,...,l$ are convex, we have $\tilde{g}_j(u,v)\leq 0$, $\forall u,v\in\RR^n$. We observe that
\begin{eqnarray}
\|Q\tilde{g}(u,v)\|_{\nu}&\leq&\|Q\tilde{g}(u,v)\|_{1}\quad\mbox{(since $\nu\geq1$)}\nonumber\\
&\leq&\sum\limits_{i=1}^{m}\sum\limits_{j=1}^{l}\big{|}Q_{ij}\tilde{g}_j(u,v)\big{|}\nonumber\\
&=&\sum\limits_{j=1}^{l}\sum\limits_{i=1}^{m}Q_{ij}\big{|}\tilde{g}_j(u,v)\big{|}\quad\mbox{($Q_{ij}\geq 0$, $i=1,...,m$, $j=1,...,l$)}\nonumber\\
&\leq&\sum\limits_{j=1}^{l}\omega_{j}\big{|}\tilde{g}_j(u,v)\big{|}\quad\mbox{($\omega_j\geq\sum\limits_{i=1}^{m}Q_{ij}$, $j=1,...,l$)}\nonumber\\
&=&-\sum\limits_{j=1}^{l}\omega_{j}\tilde{g}_j(u,v)\quad\mbox{($\tilde{g}_j(u,v)\leq 0$ and $\omega_j\geq 0$, $j=1,...,l$)}\nonumber\\
&\leq&-\big{(}\omega^{\top}\tilde{g}(u,v)+\tilde{g}_0(u,v)\big{)},\qquad\mbox{($\tilde{g}_0(u,v)\leq 0$)}
\end{eqnarray}
which implies that $\Theta\big{(}\alpha u+(1-\alpha)v\big{)}-\alpha\Theta(u)-(1-\alpha)\Theta(v)\in-\mathcal{K}_{\nu}^{m+1}$ and $\Theta(u)$ is $\mathcal{K}_{\nu}^{m+1}$-convex on $\RR^n$.\\
(ii) Statements (ii) and (iii) are directly deduced from statement (i).\qed
\end{proof}
\subsubsection{Estimation of the bound for dual optimal solution}\label{sec:dual_bound}
The estimation of bound $M$ (or $M_0$) is required for implementation of VAPP. In this section, we will provide the estimate of dual optimal bound for problem (P) with special convex cone $\mathbf{C}=\RR_+^m$ or $\mathbf{C}=\mathcal{K}_{\nu}^m$. If $\mathbf{C}=\RR_+^m$, Hiriart-Urruty and Lemar\'echal gave a dual optimal bound as follows. (See Section 2.3 Chapter VII of~\cite{Slaters})
\begin{eqnarray*}
\|p^*\|\leq M_0=\frac{(G+J)(\hat{u})-\underline{G+J}}{\min\limits_{1\leq j\leq m}\{-\Theta_j(\hat{u})\}}.
\end{eqnarray*}
where $\underline{G+J}$ is the lower bound of $(G+J)(u^*)$ and $\hat{u}$ is a vector that satisfies CQC condition for problem (P).\\
\indent When $\mathbf{C}=\mathcal{K}_{\nu}^m$, we will give a dual optimal bound, and the following lemma shows that $M_0$ is computable. A more general case for the estimation of the bound can be found in~\cite{Aybat2014}.
\begin{lemma}\label{lemma:M}
If there exists a point $\hat{u}$ satisfying CQC condition for problem (P) and $\mathbf{C}=\mathcal{K}_{\nu}^{m+1}=\{x=(x_0,\overline{x})\in \RR\times\RR^{m}|x_0\geq\|\overline{x}\|_{\nu}\}$, then we have that
\begin{eqnarray}\label{eq:M0}
\|p^*\|\leq M_0= m^{\max\{\frac{\omega-2}{2\omega},0\}}\cdot2^{\frac{1}{\omega}}\cdot\frac{(G+J)(\hat{u})-\underline{G+J}}{-\theta_0-\|\overline{\theta}\|_\nu},
\end{eqnarray}
where $\frac{1}{\omega}+\frac{1}{\nu}=1$, $\underline{G+J}$ is the lower bound of $(G+J)(u^*)$ and $\Theta(\hat{u})=\left(\begin{array}{l}\theta_0\\\overline{\theta}\end{array}\right)$.
\end{lemma}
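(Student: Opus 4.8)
The plan is to mimic the classical Hiriart-Urruty--Lemar\'echal derivation recalled just above for $\mathbf{C}=\RR_+^m$, replacing the coordinatewise positivity argument by the conjugate structure of the norm cone. The engine is weak duality evaluated at the Slater point $\hat u$. Concretely, I would first invoke the no-duality-gap statement from Section~\ref{pre} together with (H$_7$) to write $\psi(p^*)=(G+J)(u^*)$, and then use $\psi(p^*)=\min_{u\in\mathbf{U}}L(u,p^*)\le L(\hat u,p^*)=(G+J)(\hat u)+\langle p^*,\Theta(\hat u)\rangle$. Combining these with $\underline{G+J}\le(G+J)(u^*)$ yields the single scalar inequality
\begin{equation*}
-\langle p^*,\Theta(\hat u)\rangle\le (G+J)(\hat u)-\underline{G+J},
\end{equation*}
which is the only place the objective enters.

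Next I would exploit the cone geometry. The key preliminary fact is that the conjugate cone of $\mathcal{K}_\nu^{m+1}$ is $\mathbf{C}^*=\mathcal{K}_\omega^{m+1}$ with $\tfrac1\nu+\tfrac1\omega=1$, since the polar of the $\nu$-norm ball is the $\omega$-norm ball; hence $p^*=(p_0^*,\bar p^*)$ satisfies $p_0^*\ge\|\bar p^*\|_\omega$. Writing $\Theta(\hat u)=(\theta_0,\bar\theta)$ and applying H\"older's inequality to the conjugate pair $(\omega,\nu)$ gives
\begin{equation*}
-\langle p^*,\Theta(\hat u)\rangle=p_0^*(-\theta_0)-\langle\bar p^*,\bar\theta\rangle\ge p_0^*(-\theta_0)-\|\bar p^*\|_\omega\|\bar\theta\|_\nu\ge p_0^*\big(-\theta_0-\|\bar\theta\|_\nu\big),
\end{equation*}
where the last step uses $\|\bar p^*\|_\omega\le p_0^*$. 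The constraint qualification (H$_6$), namely $\Theta(\hat u)\in-\mathring{\mathbf{C}}$, means precisely $-\theta_0>\|\bar\theta\|_\nu$, so the factor $-\theta_0-\|\bar\theta\|_\nu$ is strictly positive and may be divided out, producing the apex bound $p_0^*\le\big((G+J)(\hat u)-\underline{G+J}\big)/(-\theta_0-\|\bar\theta\|_\nu)$.

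The final step converts the bound on the apex coordinate $p_0^*$ into the bound on $\|p^*\|$, and this is where the two constants $2^{1/\omega}$ and $m^{\max\{(\omega-2)/(2\omega),0\}}$ arise and where the bookkeeping must be done carefully. Using cone membership once more, $\|p^*\|_\omega^\omega=(p_0^*)^\omega+\|\bar p^*\|_\omega^\omega\le 2(p_0^*)^\omega$, i.e. $\|p^*\|_\omega\le 2^{1/\omega}p_0^*$; then passing from the $\omega$-norm to the Euclidean norm via the standard equivalence $\|x\|_2\le m^{\max\{1/2-1/\omega,0\}}\|x\|_\omega$, and noting $\max\{1/2-1/\omega,0\}=\max\{(\omega-2)/(2\omega),0\}$, delivers the claimed $M_0$. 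I expect this paragraph to be the main obstacle: one must track which dimension enters the norm-equivalence constant and separate the regimes $\omega\le2$ and $\omega>2$ (for $\omega\le2$ the Euclidean-to-$\omega$ conversion is free and the $m$-factor collapses to $1$), and the alternative split $\|p^*\|_2^2=(p_0^*)^2+\|\bar p^*\|_2^2$ must be reconciled with the stated constant. Everything else is routine once the conjugate-cone identity $\mathbf{C}^*=\mathcal{K}_\omega^{m+1}$ and the Slater inequality are in place.
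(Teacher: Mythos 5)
Your proposal is correct and follows essentially the same route as the paper: evaluate the saddle-point/weak-duality inequality at the Slater point $\hat u$, use the conjugate cone $\mathbf{C}^*=\mathcal{K}_\omega^{m+1}$ together with H\"older to lower-bound $\langle p^*,-\Theta(\hat u)\rangle$ by $p_0^*(-\theta_0-\|\overline{\theta}\|_\nu)$, and finish with the $\ell_\omega$-to-$\ell_2$ norm comparison yielding the factor $m^{\max\{(\omega-2)/(2\omega),0\}}\cdot 2^{1/\omega}$. The paper packages the middle step as a lower bound on $\cos\alpha$ over normalized dual directions $q_0=1$, $\|\overline{q}\|_\omega\le 1$ rather than bounding the apex coordinate $p_0^*$ directly, but the ingredients and the resulting constant are identical (including the paper's own use of $m$ rather than $m+1$ in the norm-equivalence factor, which you rightly flag as the delicate bookkeeping point).
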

\proof
Take $u=\hat{u}$ in the left hand side of saddle point inequality, we have
\begin{eqnarray}\label{eq:M00}
(G+J)(\hat{u})-\underline{G+J}&\geq&(G+J)(\hat{u})-(G+J)(u^*)\nonumber\\
&\geq&\langle p^*,-\Theta(\hat{u})\rangle\nonumber\\
&=&\|p^*\|\cdot\|\Theta(\hat{u})\|\cdot\cos\alpha,
\end{eqnarray}
where $\alpha$ is the included angle between vector $p^{*}\in\mathbf{C}^*$ and $-\Theta(\hat{u})\in\mathring{\mathbf{C}}$. Since $\mathbf{C}=\mathcal{K}_{\nu}^{m+1}$ then we have that
\begin{eqnarray}\label{eq:M1}
\cos\alpha\geq\min_{q_0=1,\|\overline{q}\|_{\omega}\leq 1}\frac{\langle-\Theta(\hat{u}),q\rangle}{\|q\|\cdot\|\Theta(\hat{u})\|}\geq0,\; \mbox{with}\;q=\left(\begin{array}{l}q_0\\\overline{q}\end{array}\right).
\end{eqnarray}
However $$\|q\|\leq m^{\max\{\frac{\omega-2}{2\omega},0\}}\cdot\|q\|_{\omega}\leq m^{\max\{\frac{\omega-2}{2\omega},0\}}\cdot\left(\|\overline{q}\|_{\omega}^{\omega}+(q_0)^{\omega}\right)^{\frac{1}{\omega}}\leq m^{\max\{\frac{\omega-2}{2\omega},0\}}\cdot2^{\frac{1}{\omega}}.$$
Thus,
\begin{eqnarray}\label{eq:M2}
\cos\alpha&\geq&\frac{-\theta_0+\min\limits_{\|\overline{q}\|_{\omega}\leq 1}\langle-\overline{\theta},\overline{q}\rangle}{m^{\max\{\frac{\omega-2}{2\omega},0\}}\cdot2^{\frac{1}{\omega}}\cdot\|\Theta(\hat{u})\|}\nonumber\\
&\geq&\frac{-\theta_0-\max\limits_{\|\overline{q}\|_{\omega}\leq 1}\langle\overline{\theta},\overline{q}\rangle}{m^{\max\{\frac{\omega-2}{2\omega},0\}}\cdot2^{\frac{1}{\omega}}\cdot\|\Theta(\hat{u})\|}\nonumber\\
&=&\frac{-\theta_0-\|\overline{\theta}\|_\nu}{m^{\max\{\frac{\omega-2}{2\omega},0\}}\cdot2^{\frac{1}{\omega}}\cdot\|\Theta(\hat{u})\|}
\end{eqnarray}
where $\Theta(\hat{u})=\left(\begin{array}{l}\theta_0\\\overline{\theta}\end{array}\right)$.
Taking~\eqref{eq:M00} and~\eqref{eq:M2} together, the desired estimate~\eqref{eq:M0} is provided.
\qed
\section{Empirical Results}\label{sec:numerical}
\indent In this section, we test the implementation of our method for solving the Ivanov-type structured elastic net support vector machine problem~\cite{SENSVM,SENSVM2}. The Ivanov regularization problem is a natural expression of structural risk minimization learning problems~\cite{Vapnik98}. This regularization framework provides the ability to directly handle the empirical risk and the hypothesis space~\cite{Vapnik03,TIM}. In this subsection, we consider the Ivanov-type structured elastic net support vector machine problem~\cite{SENSVM,SENSVM2}. This problem is usually formulated as following nonlinear programming with one inequality constraint (see (SEN-SVM-I)). By the definition of $\nu$-norm cone $\mathcal{K}_{\nu}^{k}=\{x=(x_0,\overline{x})\in \RR\times\RR^{k-1}|x_0\geq\|\overline{x}\|_{\nu}\}\subset\RR^k(\nu\geq1)$. The structured elastic net support vector machine problem can be reformulated as following nonlinear programming with cone constraints (see (SEN-SVM-C)).
\begin{equation*}\label{Prob:SEN-SVM}
\begin{array}{|l|l|}
\hline
\mbox{({\bf SEN-SVM-I}){\bf:}}&\mbox{({\bf SEN-SVM-C}){\bf:}}\\
\hline
\begin{array}{cl}
\min\limits_{u\in\RR^n}&\frac{1}{2}\|Au-b\|^2      \\
\rm {s.t}              &\Theta(u)=\alpha\|u\|_1+(1-\alpha)u^{\top}Qu\leq\delta,
\end{array}
&
\begin{array}{cl}
\min\limits_{u\in\RR^n}&\frac{1}{2}\|Au-b\|^2      \\
\rm {s.t}              &\Omega(u)=\left(\begin{array}{c}(1-\alpha)u^{\top}Qu-\delta\\ \alpha u\end{array}\right)\in-\mathcal{K}_1^{n+1},
\end{array}\\
\hline
\end{array}
\end{equation*}
where $u\in\RR^{n}$; $A\in\RR^{m\times n}$, $b\in\RR^{m}$, $Q\in\RR^{n\times n}$, $Q\succ0$, $\alpha\in(0,1)$, $\delta>0$. By the result of Lemma~\ref{lemma:C_convex}, we have that $\Omega(u)$ is $\mathcal{K}_1^{n+1}$-convex. Moreover, it is easy to see that the feasible point $\hat{u}=\mathbf{0}_n$ satisfies CQC conditions and that $0$ is one lower bound of objective function for both (SEN-SVM-I) and (SEN-SVM-C). Moreover, by Hiriart-Urruty and Lemar\'echal's bound and the bound in Lemma~\ref{lemma:M}, we can get the bound of optimal dual as: $M_1=\frac{1}{2\delta}\|b\|^2+1$ (for (SEN-SVM-I)) and $M_2=\frac{\sqrt{n+1}}{2\delta}\|b\|^2+1$ (for (SEN-SVM-C)). Taking $K(u)=\frac{1}{2}\|u\|^2$, we use the VAPP-M scheme to solve (SEN-SVM-I) and (SEN-SVM-C) as follows:
\begin{equation*}
\begin{array}{|l|l|}
\hline
\mbox{{\bf VAPP-M algorithm for (SEN-SVM-I):}}&\mbox{{\bf VAPP-M algorithm for (SEN-SVM-C):}}\\
\hline
\left\{\begin{array}{l}
u^{k+1}=\arg\min\limits_{u\in\RR^n}\|u\|_1+\frac{1}{2\epsilon^k\alpha q_1^k}\big{\|}u-(u^k-\epsilon^k\zeta_1^k)\big{\|}^2\\
p^{k+1}=\min\bigg{\{}M_1,\max\big{\{}0,p^k+\gamma\Theta(u^{k+1})\big{\}}\bigg{\}}
\end{array}\right.
&
\left\{\begin{array}{l}
u^{k+1}=u^k-\epsilon^k\zeta_2^k\\
p^{k+1}=\Pi_{\mathcal{K}_{\infty}^{n+1}\cap\mathfrak{B}_{M_2}}\left(p^k+\gamma\Omega(u^{k+1})\right)
\end{array}\right.\\
\hline
\end{array}
\end{equation*}
where $q_1^k=\min\bigg{\{}M_1,\max\big{\{}0,p^k+\gamma\Theta(u^k)\big{\}}\bigg{\}}$, $q_2^{k}=\Pi_{\mathcal{K}_{\infty}^{n+1}\cap\mathfrak{B}_{M_2}}\left(p^k+\gamma\Omega(u^{k})\right)$  $\zeta_1^k=A^{\top}(Au^k-b)+(1-\alpha)q_1^k(Q+Q^{\top})u^k$ and $\zeta_2^k=A^{\top}(Au^k-b)+\left(\nabla\Omega(u^k)\right)^{\top}q_2^k$.\\
\indent Additionally, another classical algorithm Mirror-Prox (see~\cite{Mirror-Prox-1,Mirror-Prox-2}) can solve convex-concave saddle point problems associated with (SEN-SVM-C):
\begin{equation*}\label{Prob:SEN-SVM-SP}
\mbox{({\bf SEN-SVM-SP}){\bf:}}\quad\min\limits_{u\in\RR^n}\max\limits_{p\in\mathcal{K}_{\infty}^{n+1}\cap\mathfrak{B}_{M_2}}L(u,p)=\frac{1}{2}\|Au-b\|^2+\langle p,\Omega(u)\rangle
\end{equation*}
The scheme of Mirror-Prox algorithm is as follows:
\begin{equation*}
\begin{array}{l}
\mbox{{\bf Mirror-Prox algorithm for (SEN-SVM-SP):}}\\
\left\{\begin{array}{l}
\tilde{u}^{k}=u^k-\gamma^k\nabla_uL(u^k,p^k)\\
\tilde{p}^{k}=\Pi_{\mathcal{K}_{\infty}^{n+1}\cap\mathfrak{B}_{M_2}}\left(p^k+\gamma^k\nabla_pL(u^k,p^k)\right)\\
u^{k+1}=u^k-\gamma^k\nabla_uL(\tilde{u}^k,\tilde{p}^k)\\
p^{k+1}=\Pi_{\mathcal{K}_{\infty}^{n+1}\cap\mathfrak{B}_{M_2}}\left(p^k+\gamma^k\nabla_pL(\tilde{u}^k,\tilde{p}^k)\right)
\end{array}\right.
\end{array}
\end{equation*}
\indent In this experiment, we compared our method against Mirror-prox on a randomly generated Ivanov-type structured elastic net support vector machine problem. The elements of $A\in\RR^{m\times n}$ are selected i.i.d. from a Gaussian $\mathcal{N}(0,1)$ distribution. $Q=B^{\top}B$. The elements of $B\in\RR^{n\times n}$ are selected i.i.d. from a Gaussian $\mathcal{N}(0,1)$ distribution. To construct a sparse true solution $u^*\in\RR^n$, given the dimension $n$ and sparsity $s$, we select $s$ entries of $u^*$ at random to be nonzero and $\mathcal{N}(0,1)$ normally distributed, and set the rest to zero. The measurement vector $b\in\RR^m$ is obtained by $b=Au^*$. We choose $\alpha=0.4$ and $\delta=\alpha\|u^*\|_1+(1-\alpha)\left(u^*\right)^{\top}Qu^*$ with $m=100$, $n=1000$, and $s=5$ in Figure~\ref{fig:1}. It is obvious that the optimal value of the example is zero. We perform this experiment in MATLAB(R2011b) on a personal computer with an Intel Core i5-6200U CPUs (2.40GHz) and 8.00 GB of RAM.\\
\indent The left-hand graph shows the algorithms, plotting suboptimality versus iteration count. The middle graph indicates the algorithms and plots feasibility value versus iteration count. The right-hand graph plots average computation time per iteration of different algorithms. From Figure~\ref{fig:1}, we have the following conclusions:\\
(1) The left-hand graph and the middle graph of Figure~\ref{fig:1} show that the VAPP-M algorithm can effectively solve SEN-SVM problem in both formulations ((SEN-SVM-I) and (SEN-SVM-C)).\\
(2) The left-hand graph and the middle graph of Figure~\ref{fig:1} show that the total number of iterations required of VAPP-M-SEN-SVM-C is less than Mirror Prox. The total number of iterations required of VAPP-M-SEN-SVM-I is near Mirror-Prox-SEN-SVM-SP.\\
(3) The right-hand graph of Figure~\ref{fig:1} shows computation time per iteration of VAPP-M-SEN-SVM-C is about $1/2$ of Mirror-Prox-SEN-SVM-SP used. The computation time per iteration of VAPP-M-SEN-SVM-I is about $1/4$ of Mirror-Prox used.
\begin{figure}
\begin{minipage}[t]{0.32\linewidth}
\centering
\includegraphics[width=1.65in]{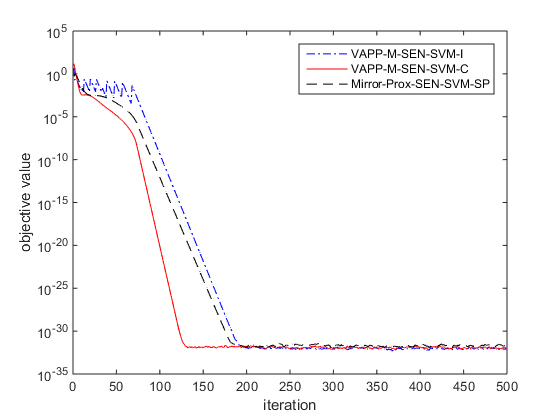}
\label{fig:side:1-b}
\end{minipage}
\begin{minipage}[t]{0.32\linewidth}
\centering
\includegraphics[width=1.65in]{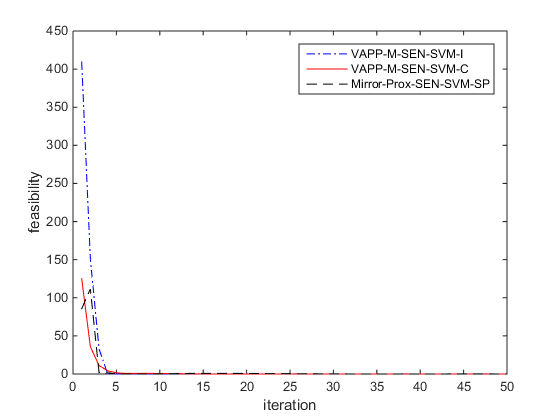}
\label{fig:side:1-c}
\end{minipage}
\begin{minipage}[t]{0.32\linewidth}
\centering
\includegraphics[width=1.65in]{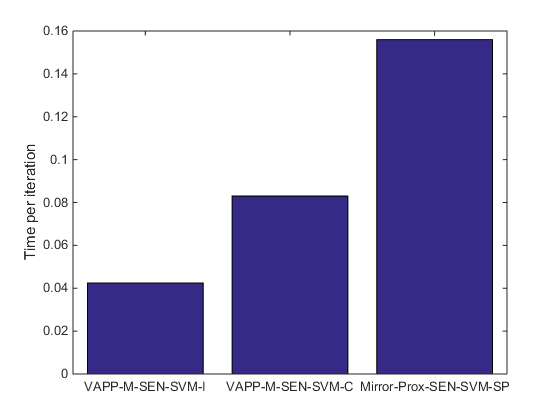}
\label{fig:side:2-b}
\end{minipage}
\caption{$m=100$, $n=1000$, and $s=5$. The left-hand graph shows the algorithms and plots suboptimality versus iteration count. The middle graph indicates the algorithms and plots feasibility value versus iteration count. The right-hand graph plots average computation time per iteration of different algorithms}
\label{fig:1}
\end{figure}
\section{Appendix}
{\bf A$_1$: Proof of Lemma~\ref{lemma:bound1} (Descent inequalities of generalized distance function):}\\
{\sl Step 1. Estimate $L(u^{k+1},q^k)-L(u,q^k)$:}\\
For the primal subproblem~\eqref{primal} of VAPP, the unique solution $u^{k+1}$ is characterized by the following variational inequality:
\begin{eqnarray}\label{eq:VI}
\langle\nabla G(u^{k}),u-u^{k+1}\rangle+J(u)-J(u^{k+1})+\langle q^k,\nabla\Omega(u^{k})(u-u^{k+1})+\Phi(u)-\Phi(u^{k+1})\rangle\nonumber\\
+\frac{1}{\epsilon^k}\langle \nabla K(u^{k+1})-\nabla K(u^k), u-u^{k+1}\rangle\geq 0, \forall u\in\mathbf{U},\nonumber\\
\end{eqnarray}
which follows that
\begin{eqnarray}\label{eq:primal_bound1}
L(u^{k+1},q^k)-L(u,q^k)&=&(G+J)(u^{k+1})-(G+J)(u)+\langle q^k, \Theta(u^{k+1})-\Theta(u)\rangle\qquad\nonumber\\
&\leq&\underbrace{G(u^{k+1})-G(u)+\langle\nabla G(u^{k}),u-u^{k+1}\rangle}_{\Lambda_1}\nonumber\\
&&+\underbrace{\langle q^k,\Omega(u^{k+1})-\Omega(u)+\nabla\Omega(u^{k})(u-u^{k+1})\rangle}_{\Lambda_2}\nonumber\\
&&+\underbrace{\frac{1}{\epsilon^k}\langle\nabla K(u^{k+1})-\nabla K(u^k), u-u^{k+1}\rangle}_{\Lambda_3}.
\end{eqnarray}
By the convexity of $G$, we estimate term $\Lambda_1$ in~\eqref{eq:primal_bound1}.
\begin{eqnarray}\label{eq:primal_bound3}
\Lambda_1&=&G(u^k)-G(u)+\langle\nabla G(u^{k}),u-u^{k}\rangle+\big{(}G(u^{k+1})-G(u^{k})-\langle\nabla G(u^{k}),u^{k+1}-u^{k}\rangle\big{)}\nonumber\\
         &\leq& G(u^{k+1})-G(u^{k})-\langle\nabla G(u^{k}),u^{k+1}-u^{k}\rangle.
\end{eqnarray}
Since $\Omega(u)$ is $\mathbf{C}$-convex, $q^k\in\mathbf{C}^*$, then $\langle q^k,\Omega(u)\rangle$ is convex and
\begin{eqnarray}\label{eq:primal_bound33}
\Lambda_2&=&\langle q^k,\Omega(u^{k})-\Omega(u)+\nabla\Omega(u^{k})(u-u^{k})\rangle+\big{(}\langle q^k,\Omega(u^{k+1})-\Omega(u^{k})-\nabla\Omega(u^{k})(u^{k+1}-u^{k})\rangle\big{)}\nonumber\\
&\leq&\langle q^k,\Omega(u^{k+1})-\Omega(u^{k})-\nabla\Omega(u^{k})(u^{k+1}-u^{k})\rangle.
\end{eqnarray}
Since $K(\cdot)$ satisfies Assumption~\ref{assump2}, simple algebraic operation follows that
\begin{equation}\label{eq:primal_bound2}
\Lambda_3=\frac{1}{\epsilon^k}\langle \nabla K(u^{k+1})-\nabla K(u^k),u-u^{k+1}\rangle=\frac{1}{\epsilon^k}\big{[}D(u,u^k)-D(u,u^{k+1})-D(u^{k+1},u^k)\big{]},
\end{equation}
Take $\Lambda_1$, $\Lambda_2$ and $\Lambda_3$ into~\eqref{eq:primal_bound1}, we have
\begin{eqnarray*}\label{eq:primal_bound4}
L(u^{k+1},q^k)-L(u,q^k)&\leq&\frac{1}{\epsilon^k}D(u,u^k)-\frac{1}{\epsilon^{k}}D(u,u^{k+1})-\frac{1}{\epsilon^k}\bigg{\{}D(u^{k+1},u^{k})\;\nonumber\\
&&-\epsilon^k\bigg{[}\big{(}G(u^{k+1})-G(u^{k})-\langle\nabla G(u^{k}),u^{k+1}-u^{k}\rangle\big{)}\nonumber\\
&&+\langle q^k,\Omega(u^{k+1})-\Omega(u^{k})-\nabla\Omega(u^{k})(u^{k+1}-u^{k})\rangle\bigg{]}\bigg{\}}.
\end{eqnarray*}
Multiply $\epsilon^k$ on both side of the above inequality, and we have that
\begin{eqnarray}\label{eq:EP-bound1}
&&\epsilon^k[L(u^{k+1},q^k)-L(u,q^k)]\nonumber\\
&\leq& D(u,u^k)-D(u,u^{k+1})-\Delta^k(u^k,u^{k+1})-\frac{\epsilon^k\gamma}{2}\|\Theta(u^{k})-\Theta(u^{k+1})\|^2.
\end{eqnarray}
{\sl Step 2. Estimate $L(u^{k+1},p)-L(u^{k+1},q^k)$:}\\
We first derive two inequalities. By the property of projection~\eqref{eq:Projecproperty3} with $u=p^k+\gamma \Theta(u^{k+1})$, $v=p$, $\forall p\in\mathbf{C}^*$, we have
\begin{equation}\label{proj1}
\frac{1}{\gamma}\langle p-p^{k+1}, p^k+\gamma \Theta(u^{k+1})-p^{k+1}\rangle\leq 0.
\end{equation}
Using Proposition~\ref{proposition} with $u=\gamma\Theta(u^{k+1})$, $v=\gamma\Theta(u^k)$, and $w=p^k$, we have
\begin{equation}\label{proj2}
2\langle p^{k+1}-q^k,\gamma\Theta(u^{k+1})\rangle\leq\|\gamma\Theta(u^{k+1})-\gamma\Theta(u^k)\|^2+\|p^{k+1}-p^k\|^2-\|q^k-p^k\|^2.
\end{equation}
Statement (ii) follows from~\eqref{proj1} and~\eqref{proj2}:
\begin{eqnarray}\label{eq:VI_solution2}
&&L(u^{k+1},p)-L(u^{k+1},q^k)\nonumber\\
&=&\langle p-q^k,\Theta(u^{k+1})\rangle\nonumber\\
&=&\langle p-p^{k+1},\Theta(u^{k+1})\rangle+\langle p^{k+1}-q^k,\Theta(u^{k+1})\rangle\nonumber\\
&=&\frac{1}{\gamma}\langle p-p^{k+1},p^k+\gamma\Theta(u^{k+1})-p^{k+1}\rangle+\frac{1}{\gamma}\langle p-p^{k+1},p^{k+1}-p^k\rangle+\langle p^{k+1}-q^k,\Theta(u^{k+1})\rangle\nonumber\\
&\leq&\frac{1}{\gamma}\langle p-p^{k+1},p^{k+1}-p^k\rangle+\langle p^{k+1}-q^k,\Theta(u^{k+1})\rangle\qquad\qquad\qquad\qquad\mbox{(by inequality~\eqref{proj1})}\nonumber\\
&\leq&\frac{1}{\gamma}\langle p-p^{k+1},p^{k+1}-p^{k}\rangle+\frac{1}{2\gamma}\|p^k-p^{k+1}\|^2-\frac{1}{2\gamma}\|q^k-p^k\|^2+\frac{\gamma}{2}\|\Theta(u^{k})-\Theta(u^{k+1})\|^2\nonumber\\
&&\qquad\qquad\qquad\qquad\qquad\qquad\qquad\qquad\qquad\qquad\qquad\qquad\qquad\qquad\;\mbox{(by inequality~\eqref{proj2})}\nonumber\\
&=&\frac{1}{2\gamma}\big{[}\|p-p^{k}\|^2-\|p-p^{k+1}\|^2\big{]}-\frac{1}{2\gamma}\|q^k-p^k\|^2+\frac{\gamma}{2}\|\Theta(u^{k})-\Theta(u^{k+1})\|^2
\end{eqnarray}
Then, multiplying $\epsilon^k$ on both side of~\eqref{eq:VI_solution2}, we obtain
\begin{eqnarray}\label{eq:EP-bound2}
&&\epsilon^k[L(u^{k+1},p)-L(u^{k+1},q^k)]\nonumber\\
&=&\frac{\epsilon^k}{2\gamma}\big{[}\|p-p^{k}\|^2-\|p-p^{k+1}\|^2\big{]}-\frac{\epsilon^k}{2\gamma}\|q^k-p^k\|^2+\frac{\epsilon^k\gamma}{2}\|\Theta(u^{k})-\Theta(u^{k+1})\|^2\nonumber\\
&\leq&\frac{\epsilon^k}{2\gamma}\|p-p^{k}\|^2-\frac{\epsilon^{k+1}}{2\gamma}\|p-p^{k+1}\|^2-\frac{\epsilon^{k}}{2\gamma}\|q^k-p^k\|^2+\frac{\epsilon^k\gamma}{2}\|\Theta(u^{k})-\Theta(u^{k+1})\|^2\nonumber\\
&&\qquad\qquad\qquad\qquad\qquad\qquad\qquad\qquad\qquad\qquad\qquad\quad\mbox{(since $\epsilon^{k+1}\leq\epsilon^k$)}
\end{eqnarray}
{\sl Step 3. Estimate $L(u^{k+1},p)-L(u,q^k)$:}\\
Summing~\eqref{eq:EP-bound1} and~\eqref{eq:EP-bound2}, the desired result is coming.\qed
\qquad\\
{\bf A$_2$: Proof of Theorem~\ref{theo:general-convergence} (Convergence analysis for VAPP)}\\
Take $u=u^{*}$ and $p=p^*$ in Lemma~\ref{lemma:bound1}, then we have that
\begin{eqnarray}\label{nonegative-measure-func}
&&\big{[}D(u^*,u^{k+1})+\frac{\epsilon^{k+1}}{2\gamma}\|p^*-p^{k+1}\|^2\big{]}-\big{[}D(u^*,u^k)+\frac{\epsilon^k}{2\gamma}\|p^*-p^{k}\|^2\big{]}\nonumber\\
&\leq&\epsilon^k[L(u^*,q^k)-L(u^{k+1},p^*)]-\big{[}\Delta^k(u^k,u^{k+1})+\frac{\epsilon^k}{2\gamma}\|q^k-p^k\|^2\big{]}\nonumber\\
&\leq&-\big{[}\Delta^k(u^k,u^{k+1})+\frac{\epsilon^k}{2\gamma}\|q^k-p^k\|^2\big{]}\quad\mbox{(since $(u^*,p^*)$ is a saddle point~\eqref{VIS})}\nonumber\\
&\leq&-\bigg{[}\frac{\beta-\epsilon^k(B_G+B_{\Omega}+\gamma\tau^2)}{2}\|u^{k}-u^{k+1}\|^2+\frac{\epsilon^k}{2\gamma}\|q^k-p^k\|^2\bigg{]}\nonumber\\
&&\qquad\qquad\qquad\qquad\quad\;\mbox{(from~\eqref{eq:bounddelta},  $\Delta^k(u,v)\geq\frac{\beta-\epsilon^k(B_G+B_{\Omega}+\gamma\tau^2)}{2}\|u-v\|^2$)}\nonumber\\
&\leq&-\bigg{[}\frac{\beta-\overline{\epsilon}(B_G+B_{\Omega}+\gamma\tau^2)}{2}\|u^{k}-u^{k+1}\|^2+\frac{\underline{\epsilon}}{2\gamma}\|q^k-p^k\|^2\bigg{]}.\\
&&\qquad\qquad\qquad\qquad\qquad\qquad\qquad\qquad\qquad\mbox{(since $\underline{\epsilon}\leq\epsilon^k\leq\overline{\epsilon}$ satisfy~\eqref{eq:parameter})}\nonumber
\end{eqnarray}
Since $\{\epsilon^k\}$ satisfies~\eqref{eq:parameter}, we conclude that the sequence $\{D(u^*,u^{k})+\frac{\epsilon^{k}}{2\gamma}\|p^*-p^{k}\|^2\}$ is strictly decreasing, unless $u^k=u^{k+1}$ and $p^k=q^k$ or $p^k=p^{k+1}$. The rest of proof is similar to that of~\cite{CohenZ}.\qed
\qquad\\
{\bf A$_3$: Proof of Theorem~\ref{thm:ergodic_iteration_complexity} (Bifunction value estimation, primal suboptimality and feasibility for solving (P) by VAPP)}\\
{\rm(i)} Note that the set $\mathbf{U}\times\mathbf{C}^*$ is convex, and the VAPP scheme guarantees that $(u^k,p^k)\in\mathbf{U}\times\mathbf{C}^*$, $\forall k\in\mathbb{N}$; thus we have $(\bar{u}_t,\bar{p}_t)\in\mathbf{U}\times\mathbf{C}^*$. Since $\{\epsilon^k\}$ satisfies~\eqref{eq:parameter}, then $\Delta^k(u^k,u^{k+1})\geq0$. From Lemma~\ref{lemma:bound1}, we have
$$\epsilon^k[L(u^{k+1},p)-L(u,q^k)]\leq\big{[}D(u,u^k)+\frac{\epsilon^k}{2\gamma}\|p-p^{k}\|^2\big{]}-\big{[}D(u,u^{k+1})+\frac{\epsilon^{k+1}}{2\gamma}\|p-p^{k+1}\|^2\big{]}.$$
Note that the bifunction $L(u',p)-L(u,p')$ is convex in $u'$ and linear in $p'$ for given $u\in\mathbf{U}$, $p\in\mathbf{C}^*$. Summing the above inequality over $k=0,1,\ldots,t$, we obtain that
\begin{eqnarray*}
L(\bar{u}_{t},p)-L(u,\bar{p}_t)&\leq&\frac{1}{\sum_{k=0}^{t}\epsilon^k}\sum_{k=0}^{t}\epsilon^k[L(u^{k+1},p)-L(u,q^k)]\\
&\leq&\frac{1}{\underline{\epsilon}(t+1)}\big{[}D(u,u^0)+\frac{\epsilon^0}{2\gamma}\|p-p^{0}\|^2\big{]},\;\forall u\in\mathbf{U},p\in\mathbf{C}^*.
\end{eqnarray*}
{\rm(ii)} If $\|\Pi(\Theta(\bar{u}_t))\|=0$, statement (ii) is obviously true.\\
 Otherwise, taking $u=u^*\in\mathbf{U}$ and $p=\hat{p}=\frac{(M_0+1)\Pi(\Theta(\bar{u}_t))}{\|\Pi(\Theta(\bar{u}_t))\|}\in\mathbf{C}^*\cap\mathfrak{B}_M$ in statement (i) of this theorem, we have that
\begin{eqnarray}\label{eq:rate1}
&&L(\bar{u}_t,\hat{p})-L(u^*,\bar{p}_t)\nonumber\\
&=&(G+J)(\bar{u}_{t})-(G+J)(u^*)+\langle\frac{(M_0+1)\Pi(\Theta(\bar{u}_t))}{\|\Pi(\Theta(\bar{u}_t))\|}, \Theta(\bar{u}_{t})\rangle-\langle\bar{p}_t, \Theta(u^*)\rangle\nonumber\\
&\geq&(G+J)(\bar{u}_{t})-(G+J)(u^*)+\langle\frac{(M_0+1)\Pi(\Theta(\bar{u}_t))}{\|\Pi(\Theta(\bar{u}_t))\|}, \Theta(\bar{u}_{t})\rangle\;\;\mbox{(since $\langle\bar{p}_t, \Theta(u^*)\rangle\leq0$)}\nonumber\\
&=&(G+J)(\bar{u}_{t})-(G+J)(u^*)+\langle\frac{(M_0+1)\Pi(\Theta(\bar{u}_t))}{\|\Pi(\Theta(\bar{u}_t))\|}, \Pi(\Theta(\bar{u}_t))+\Pi_{-\mathbf{C}}(\Theta(\bar{u}_t))\rangle\nonumber\\
&&\qquad\qquad\qquad\qquad\qquad\qquad\qquad\qquad\qquad\qquad\qquad\quad\quad\;\;\;\;\;\mbox{(from~\eqref{eq:Projecproperty5})}\nonumber\\
&=&(G+J)(\bar{u}_{t})-(G+J)(u^*)+(M_0+1)\|\Pi(\Theta(\bar{u}_t))\|.\qquad\qquad\qquad\mbox{(from~\eqref{eq:Projecproperty6})}
\end{eqnarray}
Combining statement (i) of this theorem,~\eqref{eq:rate1} yields that
\begin{eqnarray}\label{eq:rate3}
(G+J)(\bar{u}_{t})-(G+J)(u^*)+(M_0+1)\|\Pi(\Theta(\bar{u}_t))\|&\leq&\frac{D(u^*,u^0)+\frac{\epsilon^0}{2\gamma}\|\hat{p}-p^{0}\|^2}{\underline{\epsilon}(t+1)}\nonumber\\
                                                         &\leq&\frac{d_1}{\underline{\epsilon}(t+1)},
\end{eqnarray}
where $d_1=\max\limits_{\|p\|\leq M_0+1}\big{[}D(u^*,u^0)+\frac{\epsilon^0}{2\gamma}\|p-p^{0}\|^2\big{]}$. Moreover, taking $u=\bar{u}_t$ in the right hand side of saddle point inequality~\eqref{saddle point:L} yields that
\begin{eqnarray}\label{eq:rate4}
(G+J)(\bar{u}_{t})-(G+J)(u^*)&\geq&-\langle p^*,\Theta(\bar{u}_t)\rangle\nonumber\\
                             &=&-\langle p^*,\Pi(\Theta(\bar{u}_t))+\Pi_{-\mathbf{C}}(\Theta(\bar{u}_t))\rangle\qquad\mbox{(since~\eqref{eq:Projecproperty5})}\nonumber\\
                             &\geq&-\langle p^*,\Pi(\Theta(\bar{u}_t))\rangle\quad\mbox{(since $\langle p^*,\Pi_{-\mathbf{C}}(\Theta(\bar{u}_t))\rangle\leq 0$)}\nonumber\\
                             &\geq&-\|p^*\|\|\Pi(\Theta(\bar{u}_t))\|\nonumber\\
                             &\geq&-M_0\|\Pi(\Theta(\bar{u}_t))\|.\quad\;\;\;\mbox{(by $\|p^*\|\leq M_0$)}
\end{eqnarray}
Taking~\eqref{eq:rate3} and~\eqref{eq:rate4} together, we get that $\|\Pi(\Theta(\bar{u}_t))\|\leq\frac{d_1}{\underline{\epsilon}(t+1)}$.\\
{\rm(iii)} Since $(M_0+1)\|\Pi(\Theta(\bar{u}_t))\|\geq 0$, from~\eqref{eq:rate3} we have\\
$$(G+J)(\bar{u}_{t})-(G+J)(u^*)\leq\frac{d_1}{\underline{\epsilon}(t+1)}.$$\\
Combining statement (ii) of this theorem and~\eqref{eq:rate4}, we obtain that\\
$$(G+J)(\bar{u}_{t})-(G+J)(u^*)\geq-\frac{M_0d_1}{\underline{\epsilon}(t+1)}.$$
\qquad\\
{\bf A$_4$: Proof of Lemma~\ref{lemma:ALBounded}:}\\
Suppose the assertion of the lemma does not hold, that is, for any $\kappa>0$, there is $\|p^j\|\leq d_p$ so that all optimizers $\hat{u}(p^j)\in\arg\min\limits_{u\in\mathbf{U}}L_\gamma(u,p^j)$ satisfy $\|\hat{u}(p^j)\|>\kappa$. Then, we construct a sequence $\{\hat{u}(p^j)\}$ such that $\|\hat{u}(p^j)\|\rightarrow +\infty$.\\
\indent On the other hand, we observe that
\begin{eqnarray*}
L_\gamma(\hat{u}(p^j),p^j)&=&(G+J)(\hat{u}(p^j))+\varphi\big{(}\Theta(\hat{u}(p^j)),p^j\big{)}\\
                          &=&(G+J)(\hat{u}(p^j))+\max_{q\in\mathbf{C}^*}\langle q,\Theta(\hat{u}(p^j))\rangle-\frac{1}{2\gamma}\|q-p^j\|^2\\
                          &\geq&(G+J)(\hat{u}(p^j))-\frac{1}{2\gamma}\|p^j\|^2\\
                          &\geq&(G+J)(\hat{u}(p^j))-\frac{d_p^2}{2\gamma}.
\end{eqnarray*}
\indent Since $\|\hat{u}(p^j)\|\rightarrow+\infty$, from the coercivity of $(G+J)(u)$, we have $\psi_\gamma(p^j)=L_\gamma(\hat{u}(p^j),p^j)\rightarrow+\infty$. However, from the boundness of $\{p^j\}$ and the continuity of $\psi_\gamma(\cdot)$, we conclude that $\psi_\gamma(p^j)$ is bounded, which follows one contradiction and assertion of lemma is provided.\qed
\qquad\\
\noindent{\bf A$_5$: Proof of Theorem~\ref{theo_2} (Approximate saddle point and dual suboptimality for solving (P) by VAPP):}\\
{\rm(i)} From statement (i) of Theorem~\ref{thm:ergodic_iteration_complexity}, it is easy to have that, for any $(u,p)\in(\mathbf{U}\cap\mathfrak{B}^{u})\times(\mathbf{C}^*\cap\mathfrak{B}^{p})$,
\begin{eqnarray}\label{saddle:L0}
L(\bar{u}_{t},p)-L(u,\bar{p}_{t})\leq
\frac{D(u,u^0)+\frac{\epsilon^0}{2\gamma}\|p-p^{0}\|^2}{\underline{\epsilon}(t+1)}\leq\frac{d_2}{\underline{\epsilon}(t+1)}
\end{eqnarray}
where $d_2=\max_{(u,p)\in(\mathbf{U}\cap\mathfrak{B}^{u})\times(\mathbf{C}^*\cap\mathfrak{B}^{p}))}\big{[}D(u,u^0)+\frac{\epsilon^0}{2\gamma}\|p-p^{0}\|^2\big{]}$.\\
Since $\bar{u}_t\in\mathbf{U}\cap\mathfrak{B}^{u}$, then taking $u=\bar{u}_t$ in~\eqref{saddle:L0}, we obtain
\begin{eqnarray}\label{saddle:L1}
    L(\bar{u}_{t},p)-L(\bar{u}_{t},\bar{p}_{t})\leq\frac{d_2}{\underline{\epsilon}(t+1)}, \forall p\in\mathbf{C}^*\cap\mathfrak{B}^{p}.
\end{eqnarray}
Similarly, by taking $p=\bar{p}_t\in\mathbf{C}^*\cap\mathfrak{B}^{p}$ in~\eqref{saddle:L0}, we obtain
\begin{eqnarray}\label{saddle:L2}
L(\bar{u}_{t},\bar{p}_t)-L(u,\bar{p}_{t})\leq\frac{d_2}{\underline{\epsilon}(t+1)}, \forall u\in\mathbf{U}\cap\mathfrak{B}^{u}.
\end{eqnarray}
{\rm(ii)} In the left-hand side of inequality in statement (i), taking $p=0$, we get $\langle\bar{p}_t,\Theta(\bar{u}_t)\rangle\geq-\frac{d_2}{\underline{\epsilon}(t+1)}$. Then, from~\eqref{func:varphi_2}, we have
\begin{eqnarray}\label{saddle:Lr1}
\varphi\big{(}\Theta(\bar{u}_t),\bar{p}_t\big{)}\geq\langle\bar{p}_t,\Theta(\bar{u}_t)\rangle\geq-\frac{d_2}{\underline{\epsilon}(t+1)}.
\end{eqnarray}
On the other hand, for $p\in\mathbf{C}^*\cap\mathfrak{B}^{p}$, we have
\begin{eqnarray}\label{saddle:Lr2}
\varphi\big{(}\Theta(\bar{u}_t),p\big{)}&=&\min_{\xi\in-\mathbf{C}}\langle p,\Theta(\bar{u}_t)-\xi\rangle+\frac{\gamma}{2}\|\Theta(\bar{u}_t)-\xi\|^2\qquad\mbox{(from~\eqref{func:varphi_1})}\nonumber\\
&\leq&\langle p,\Theta(\bar{u}_t)-\Pi_{-\mathbf{C}}(\Theta(\bar{u}_t))\rangle+\frac{\gamma}{2}\|\Theta(\bar{u}_t)-\Pi_{-\mathbf{C}}(\Theta(\bar{u}_t))\|^2\nonumber\\
&\leq&\|p\|\cdot\|\Pi(\Theta(\bar{u}_t))\|+\frac{\gamma}{2}\|\Pi(\Theta(\bar{u}_t))\|^2\nonumber\\
&\leq&\frac{r^pd_1}{\underline{\epsilon}(t+1)}+\frac{\gamma(d_1)^2}{2\underline{\epsilon}^2(t+1)^2}.\\
&&\mbox{(from statment (ii) of Theorem~\ref{thm:ergodic_iteration_complexity} and $p\in\mathbf{C}^*\cap\mathfrak{B}^{p}$)}\nonumber
\end{eqnarray}
Therefore, we get the left-hand side of inequality in statement (ii):
\begin{eqnarray}\label{eq:Lr_3}
L_{\gamma}(\bar{u}_{t},p)-L_{\gamma}(\bar{u}_{t},\bar{p}_{t})&=&\varphi(\Theta(\bar{u}_t),p)-\varphi(\Theta(\bar{u}_t),\bar{p}_t)\nonumber\\
                                                             &\leq&\frac{r^pd_1+d_2}{\underline{\epsilon}(t+1)}+\frac{\gamma (d_1)^2}{2\underline{\epsilon}^2(t+1)^2},
\end{eqnarray}
From~\eqref{saddle:Lr1} and~\eqref{saddle:Lr2}, it also has that
\begin{eqnarray*}
-\frac{d_2}{\underline{\epsilon}(t+1)}\leq\langle\bar{p}_t,\Theta(\bar{u}_t)\rangle\leq\varphi(\Theta(\bar{u}_t),\bar{p}_t)\leq\frac{r^p d_1}{\underline{\epsilon}(t+1)}+\frac{\gamma(d_1)^2}{2\underline{\epsilon}^2(t+1)^2},
\end{eqnarray*}
which follows that
\begin{eqnarray*}
\varphi(\Theta(\bar{u}_t),\bar{p}_t)-\langle\bar{p}_t,\Theta(\bar{u}_t)\rangle&\leq&\frac{r^pd_1}{\underline{\epsilon}(t+1)}+\frac{\gamma (d_1)^2}{2\underline{\epsilon}^2(t+1)^2}-(-\frac{d_2}{\underline{\epsilon}(t+1)})\\
&=&\frac{r^pd_1+d_2}{\underline{\epsilon}(t+1)}+\frac{\gamma (d_1)^2}{2\underline{\epsilon}^2(t+1)^2}.
\end{eqnarray*}
Then, for $u\in\mathbf{U}\cap\mathfrak{B}^{u}$, we have
\begin{eqnarray}\label{eq:Lr_4}
L_\gamma(\bar{u}_{t},\bar{p}_{t})&\leq&L(\bar{u}_{t},\bar{p}_{t})+\frac{r^pd_1+d_2}{\underline{\epsilon}(t+1)}+\frac{\gamma (d_1)^2}{2\underline{\epsilon}^2(t+1)^2}\nonumber\\
&\leq&L(u,\bar{p}_{t})+\frac{r^pd_1+2d_2}{\underline{\epsilon}(t+1)}+\frac{\gamma (d_1)^2}{2\underline{\epsilon}^2(t+1)^2}\qquad\mbox{(by right hand side of statement (i))}\nonumber\\
&\leq&L_\gamma(u,\bar{p}_{t})+\frac{r^pd_1+2d_2}{\underline{\epsilon}(t+1)}+\frac{\gamma(d_1)^2}{2\underline{\epsilon}^2(t+1)^2},\qquad\qquad\qquad\mbox{(from~\eqref{func:varphi_2})}
\end{eqnarray}
which follows the right-hand side of inequality in statement (ii).\\
{\rm(iii)} For saddle point $(u^*,p^*)$, we have
\begin{eqnarray}\label{eq:saddlepoint_Lr}
L_{\gamma}(u^*,p)\leq L_{\gamma}(u^*,p^*)\leq L_{\gamma}(u,p^*), \forall u\in\mathbf{U}, p\in\RR^m
\end{eqnarray}
\indent Taking $u=\bar{u}_t$, $p=\bar{p}_t$ in~\eqref{eq:saddlepoint_Lr}, and taking $u=\hat{u}(\bar{p}_t)$, $p=p^*$ in statement (ii) of this theorem, we obtain the following two inequalities, respectively:
\begin{eqnarray*}
L_{\gamma}(u^*,\bar{p}_t)\leq&L_{\gamma}(u^*,p^*)&\leq L_{\gamma}(\bar{u}_t,p^*),\label{eq:dual_1}
\end{eqnarray*}
and
\begin{eqnarray*}
-\frac{r^pd_1+d_2}{\underline{\epsilon}(t+1)}-\frac{\gamma(d_1)^2}{2\underline{\epsilon}^2(t+1)^2}+L_{\gamma}(\bar{u}_{t},p^*)\leq L_{\gamma}(\bar{u}_{t},\bar{p}_{t})\leq L_{\gamma}(\hat{u}(\bar{p}_t),\bar{p}_{t})+\frac{r^pd_1+2d_2}{\underline{\epsilon}(t+1)}+\frac{\gamma (d_1)^2}{2\underline{\epsilon}^2(t+1)^2}.\label{eq:dual_2}
\end{eqnarray*}
Combining these two inequalities, the desired inequality is obtained:
\begin{eqnarray*}
-\frac{r^pd_1+d_2}{\underline{\epsilon}(t+1)}-\frac{\gamma(d_1)^2}{2\underline{\epsilon}^2(t+1)^2}+L_{\gamma}(u^*,p^*)\leq L_{\gamma}(\hat{u}(\bar{p}_t),\bar{p}_{t})+\frac{r^pd_1+2d_2}{\underline{\epsilon}(t+1)}+\frac{\gamma (d_1)^2}{2\underline{\epsilon}^2(t+1)^2}.
\end{eqnarray*}
Therefore
\begin{eqnarray}
\psi_\gamma(p^*)=L_{\gamma}(u^*,p^*)&\leq&L_{\gamma}(\hat{u}(\bar{p}_t),\bar{p}_{t})+\frac{2r^pd_1+3d_2}{\underline{\epsilon}(t+1)}+\frac{\gamma (d_1)^2}{\underline{\epsilon}^2(t+1)^2}\nonumber\\
&=&\psi_\gamma(\bar{p}_t)+\frac{2r^pd_1+3d_2}{\underline{\epsilon}(t+1)}+\frac{\gamma(d_1)^2}{\underline{\epsilon}^2(t+1)^2}.
\end{eqnarray}
\qed




\end{document}